\newtheorem{theorem}{\bf Theorem}[section]
\newtheorem{cor}[theorem]{Corollary}
\newtheorem{lemma}[theorem]{Lemma}
\newtheorem{proposition}[theorem]{Proposition}
\newtheorem{thmnonumber}{\bf Theorem}
\newtheorem{cornonumber}{\bf Corollary} 
\theoremstyle{definition}
\newtheorem{rem}[theorem]{Remark}
\newtheorem{definition}[theorem]{Definition}
\newcommand{\St}{\mr{st}}
\newcommand{\Lk}{\mr{lk}}
\newcommand{\cm}[1]{}
\newcommand\mc[1]{\mathcal{#1}}
\newcommand\mr[1]{\mathrm{#1}}
\newcommand{\R}{\mathbb{R}}
\newcommand{\CAT}{\mathrm{CAT}}
\newcommand\F{\mc{F}}
\newcommand{\LLk}{\mathrm{LLk}^{\,\nu} }
\newcommand{\intx}{\mathrm{int}\,}
\newcommand{\sd}{\mathrm{sd}\, }
\newcommand{\RS}{\mathrm{R}\,}
\newcommand{\TT}{\mathrm{T}}
\newcommand{\rint}{\mathrm{relint}}
\newcommand{\RN}{\mathrm{N}}
\title{Polyhedral CAT(0) metrics on locally finite complexes}
\author{Karim A.~Adiprasito}
\address{
Jussieu Institute of Mathematics - Paris Rive Gauche, 
4 place Jussieu,
Mailbox 247, 
75252 Paris Cedex 5, France}
\email{
karim.adiprasito@imj-prg.fr}
\author{Louis Funar}
\address{Univ. Grenoble Alpes, CNRS, Institut Fourier, 38000 Grenoble, France}
\email{louis.funar@univ-grenoble-alpes.fr}
\date{\today}
\thanks{}
\keywords{}
\subjclass[2010]{57N16, 51K10, 57N15.}
\begin{document}

\begin{abstract}
We prove the arborescence of any locally finite complex that is $\CAT(0)$ with a polyhedral metric for which all vertex stars are convex. In particular locally finite $\CAT(0)$ cube complexes or equilateral simplicial complexes are arborescent. Moreover, a 
triangulated manifold admits a $\CAT(0)$ polyhedral metric if and only if it admits arborescent triangulations. We prove eventually that every locally finite complex which is 
$\CAT(0)$ with a polyhedral metric has a barycentric subdivision which is arborescent. 
\end{abstract}

\maketitle 

\section{Introduction}

$\CAT(0)$ spaces are metric spaces for which, roughly speaking,  geodesic triangles are thinner than the Euclidean triangles with the same edge lengths (see \cite{GromovHG}). The Hadamard--Cartan theorem states that simply connected, complete, locally $\CAT(0)$ spaces are (globally) $\CAT(0)$ and hence contractible. The topology of contractible manifolds admitting $\CAT(0)$ metrics is known to be constrained, according to \cite{Lytchak,Rolfsen,Stone}. Exotic examples of such manifolds have been constructed by Davis and Januszkiewicz (see \cite{DavisJanus}) and later Ancel and Guilbault  (\cite{AG2}) showed that interiors of all compact contractible manifolds admit $\CAT(0)$ metrics. In  our previous work \cite{AF} we aimed at giving an almost complete characterization of their  topology under the assumption that the $\CAT(0)$-metric is polyhedral, e.g. piecewise Euclidean.  
If one looks more generally upon complexes instead of manifolds White (see \cite{Wh}) proved that a simplicial 2-complex admits a strongly convex metric if and only if it is collapsible. As such the result cannot be extended to higher dimensions, as there exist (non rectilinear) triangulations of the 3-cell which are not collapsible. 
Further Crowley (see \cite{Crowley}) proved that  3-dimensional $\CAT(0)$ complexes satisfying an additional condition including triangulated 3-manifolds whose piecewise Euclidean metric is $\CAT(0)$  are collapsible after subdivision. Although Crowley only considered finite 3-complexes, the infinite case can be proved the same way. 
Finite $\CAT(0)$ cubical complexes were known to be collapsible for a while, see   
(\cite{CCMW}, section 4) for more general results in this direction. Moreover, in  \cite{ABpart1} the authors proved that all finite $\CAT(0)$ complexes are collapsible after subdivision.

Our main goal here is to understand which {\em locally finite} contractible 
complexes admit polyhedral $\CAT(0)$ metrics, in particular to settle the description from \cite{AF} for manifolds. 

A key topological notion in this article is the
{\em  arborescence} of an infinite cell complex, which is the natural extension of the 
{\em collapsibility} to the realm of non-compact spaces.  Specifically, a locally finite complex is \emph{arborescent} if it is obtained 
from a vertex by infinitely many dilatations, or equivalently, if it is the ascending union of 
finite subcomplexes, each finite subcomplex collapsing on the smaller one.

In \cite{BL} the authors considered a weaker notion of arborescence of a locally finite complex which only asks for being the ascending union of collapsible complexes. It seems that this weak arborescence does not imply the arborescence, in general. In \cite{AF} we provided examples 
of locally finite complexes which are ascending unions of compact contractible submanifolds but which are not semi-stable at infinity and hence are not arborescent. On the other hand each contractible submanifold has collapsible triangulations, by \cite{ABpart1}.  
As locally finite $\CAT(0)$ complexes are ascending unions of convex finite subcomplexes and hence $\CAT(0)$ subcomplexes, they are automatically weakly arborescent.

Recall that a  $\CAT(\kappa)$ metric on a polytopal complex  is said to be {\em polyhedral} 
if every cell, when equipped with the induced metric,  
is isometric to the convex hull of a finite set of points in the spherical, hyperbolic or Euclidean 
space of curvature $\kappa$. For instance the {\em piecewise
 flat equilateral metric}   is the length metric obtained when 
simplices or cubes are Euclidean and have all their edges of the same (unit) length. 

Our first result is:

\begin{thmnonumber}[Theorem~\ref{thm:dischadamard}]
\label{mainthm:CATcollapsible}
Let $C$ be a locally finite simplicial complex that is $\CAT(0)$ with a polyhedral metric for which all vertex stars are convex. Then $C$ is arborescent.
\end{thmnonumber}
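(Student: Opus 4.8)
The plan is to realize $C$ as an ascending union of finite convex subcomplexes and then show each inclusion in this filtration is a collapse, which is precisely the definition of arborescence. First I would fix a vertex $v_0\in C$ and for each $n$ let $B_n$ be the subcomplex of $C$ spanned by all closed cells meeting the metric ball of radius $n$ about $v_0$; by local finiteness each $B_n$ is finite, and $C=\bigcup_n B_n$. These $B_n$ need not be convex, so I would instead take $K_n$ to be the convex hull (in the $\CAT(0)$ metric) of $B_n$ together with its cellular closure, enlarging if necessary so that $K_n$ is a finite subcomplex that is \emph{geodesically convex} in $C$; convexity is inherited because $C$ is $\CAT(0)$ and the metric is polyhedral, so convex hulls of finite sets are finite subcomplexes after at most a controlled enlargement. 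This gives a filtration $K_1\subseteq K_2\subseteq\cdots$ by finite convex (hence $\CAT(0)$, hence collapsible by the finite case results quoted in the introduction) subcomplexes with union $C$.

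The heart of the argument is the relative statement: if $K\subseteq L$ are finite convex subcomplexes of $C$, then $L$ collapses onto $K$. Here I would use the nearest-point projection $\pi\colon L\to K$, which in a $\CAT(0)$ space is well-defined and $1$-Lipschitz, together with the convexity of vertex stars. The idea is to process the cells of $L\setminus K$ in order of decreasing distance to $K$: the outermost cells form free faces because, by convexity of the star of the nearest vertex, the link structure near the frontier of $K$ inside $L$ is a cone. Concretely, for a cell $\sigma$ of $L$ farthest from $K$, its ``shadow'' under $\pi$ lands in a single cell of $K$, and the local geometry—convex vertex stars plus the $\CAT(0)$ comparison inequality—forces the portion of $L$ beyond $K$ near $\sigma$ to be collapsible onto its projection. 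Iterating the elementary collapses down the distance filtration removes all of $L\setminus K$ and yields $L\searrow K$.

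Assembling these relative collapses along the filtration $K_1\subseteq K_2\subseteq\cdots$ exhibits $C$ as an ascending union of finite subcomplexes each collapsing onto the previous one, which is exactly arborescence; the reduction from a vertex $\{v_0\}$ to $K_1$ is handled the same way since $K_1$ is itself finite and $\CAT(0)$, hence collapsible to a point.

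I expect the main obstacle to be the relative collapsibility step $L\searrow K$ for nested finite convex subcomplexes, and specifically the bookkeeping needed to show that the nearest-point projection interacts well with the cell structure—that shadows of cells land in cells, and that the ordering by distance to $K$ produces free faces at each stage. This is where the hypothesis that all vertex stars are convex is essential: it is what guarantees that the links encountered along the frontier are cones rather than more complicated complexes, so that elementary collapses are available. Controlling how convex hulls of finite sets sit inside the cell structure (the enlargement from $B_n$ to $K_n$) is a secondary technical point, but it should follow from standard facts about $\CAT(0)$ polyhedral complexes with finitely many shapes of cells.
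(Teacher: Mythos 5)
Your plan---exhausting $C$ by finite convex subcomplexes $K_n$ and proving $K_{n+1}\searrow K_n$ using nearest-point projection to $K_n$---takes a genuinely different route from the paper, but as written it has a real gap exactly where you flag it. The relative collapsibility $L \searrow K$ is the entire content of the theorem in disguise, and your sketch does not establish it: the projection $\pi$ to a convex set in a $\CAT(0)$ space is $1$-Lipschitz but not simplicial, so the ``shadow'' $\pi(\sigma)$ of a cell need not lie in a single cell of $K$, and it is not automatic that cells of $L$ farthest from $K$ are free faces of what remains. Making ``process cells in order of decreasing distance'' precise requires exactly the discrete-Morse bookkeeping encapsulated in Proposition~\ref{thm:formula}: a function with a unique minimum on every face star induces a Morse matching, and the critical faces are precisely those $\tau$ with $y_f(\tau)=v$ but $y_f(\tau-v)\neq v$. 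Without some such criterion your induction down the distance filtration is an unverified assertion. A secondary gap is the existence of the exhaustion itself: convex hulls in a polyhedral $\CAT(0)$ complex are generally not subcomplexes, and the ``controlled enlargement'' to a finite convex subcomplex is asserted but not justified (and cannot be supplied by the finite-$\CAT(0)$ collapsibility results quoted in the introduction, since those require a subdivision, which this theorem does not permit).

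The paper's proof sidesteps the filtration entirely. It fixes a generic point $x$ and shows the distance function $\mathrm{d}=\mathrm{d}(x,\cdot)$ is star-minimal on the whole locally finite complex---this is precisely where convexity of stars enters, via Lemma~\ref{lem:unique}. Proposition~\ref{thm:formula} then yields a Morse matching, and a short argument shows the only critical face is the vertex nearest $x$: if $\tau$ were critical with $\dim\tau\ge 1$ and $v=y_{\mathrm{d}}(\tau)$, then $\St(\tau,C)$ and $\St(\tau-v,C)$ agree near the minimizer, so $y_{\mathrm{d}}(\tau-v)=v$, contradicting the criterion for criticality. Theorem~\ref{lem:equivalenceinf} (the locally finite version of Forman's theorem) then gives arborescence directly, with the filtration read off from the matching rather than built by hand. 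The lesson is that your relative step is not easier than the global step---it is the same step---and the single-function formulation is the efficient way to carry it out; if you want to repair your approach, replace the per-stage projections by this one global Morse function.
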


{\em Convexity of vertex stars} means that for each vertex $v$, with respect to the metric introduced on $C$, the segment between any two points of the star $\St (v,C)$ lies in $\St (v,C)$. It is automatically satisfied by any complex in which all simplices are acute or right-angled.

 As a consequence of the first result we 
obtain:

\begin{cornonumber}\label{mainthm:CATcube}
Every locally finite complex that is $\CAT(0)$ with the equilateral flat metric is arborescent. Furthermore, every locally finite $\CAT(0)$ cube complex is arborescent.
\end{cornonumber}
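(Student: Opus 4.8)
The plan is to derive both claims from Theorem~\ref{mainthm:CATcollapsible}, exhibiting the complex in question --- after a barycentric subdivision in the cubical case --- as a locally finite simplicial complex all of whose simplices are non-obtuse, so that the convexity of vertex stars comes for free from the remark following that theorem.

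For a locally finite simplicial complex $C$ carrying the equilateral flat metric this is immediate. Every cell of $C$ is a regular Euclidean simplex, and the dihedral angle of a regular $d$-simplex along each of its facets equals $\arccos(1/d)<\pi/2$; hence every simplex of $C$ is acute, so all vertex stars of $C$ are convex. Since the equilateral flat metric is polyhedral, Theorem~\ref{mainthm:CATcollapsible} applies verbatim and $C$ is arborescent.

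For a locally finite $\CAT(0)$ cube complex $X$ I would pass to the barycentric subdivision $\mathrm{sd}(X)$. This is again a locally finite simplicial complex, it is the very same metric space as $X$ --- hence $\CAT(0)$ --- and its metric is polyhedral for the simplicial structure: each top-dimensional simplex of $\mathrm{sd}(X)$ lies inside some unit cube $\sigma$, with vertices the barycenters of a complete flag of faces of $\sigma$, and since the symmetry group of the cube acts transitively on such flags, this simplex is congruent to the standard orthoscheme $\mathrm{conv}\{0,\tfrac12 e_1,\tfrac12(e_1+e_2),\dots,\tfrac12(e_1+\dots+e_n)\}$, whose consecutive edge vectors $\tfrac12 e_1,\dots,\tfrac12 e_n$ are mutually orthogonal. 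Orthoschemes are non-obtuse, so once more the remark after Theorem~\ref{mainthm:CATcollapsible} yields that every vertex star of $\mathrm{sd}(X)$ is convex, and the theorem gives that $\mathrm{sd}(X)$ is arborescent.

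What is left --- and what I expect to be the main obstacle --- is to promote arborescence of $\mathrm{sd}(X)$ to arborescence of $X$ with its cubical cell structure. As recalled in the introduction, $X$ is an ascending union $X=\bigcup_n X_n$ of finite convex subcomplexes; each $X_{n-1}$ is then a convex subcomplex of the finite $\CAT(0)$ cube complex $X_n$, and it suffices to show $X_n\searrow X_{n-1}$, after which $X$ is arborescent by definition. I would do this by using the gate (nearest-point) projection of $X_n$ onto the convex subcomplex $X_{n-1}$ to order the cubes of $X_n$ outside $X_{n-1}$ so that each is removed along a free face. The point is that this relative cubical collapse does not follow formally from the statement of Theorem~\ref{mainthm:CATcollapsible} applied to $\mathrm{sd}(X_n)$: a collapse of $\mathrm{sd}(X_n)$ onto $\mathrm{sd}(X_{n-1})$ need not ``de-subdivide'' to a collapse of $X_n$ onto $X_{n-1}$, so one must either re-run the argument of that theorem directly in the cubical category or prove the relative cubical collapse by hand. (If one is content with the weaker conclusion that $X$ merely admits an arborescent subdivision, this last paragraph is unnecessary.) The remaining verifications --- local finiteness of $\mathrm{sd}(X)$, the orthogonality of the orthoscheme edges, and the existence of the finite convex exhaustion --- are routine.
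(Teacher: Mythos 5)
Your treatment of the equilateral case coincides with the paper's: both observe that regular simplices are non-obtuse (acute, in fact), so vertex/ridge stars are convex and Theorem~\ref{mainthm:CATcollapsible} applies directly; this is exactly Proposition~\ref{cor:CrowleyA} and Corollary~\ref{cor:CrowleyB} in the text.

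For the cube-complex case, however, you have correctly diagnosed a genuine gap in your own argument but have not closed it. Passing to $\sd X$ and observing that its top simplices are orthoschemes (hence non-obtuse) only yields that $\sd X$ is arborescent; as you say, a collapse of $\sd X_n$ onto $\sd X_{n-1}$ does not de-subdivide to a collapse of the cubical complex $X_n$ onto $X_{n-1}$, so the conclusion that $X$ \emph{itself} (with its cubical cell structure) is arborescent does not follow. Your fallback remark that one could "re-run the argument of that theorem directly in the cubical category" is precisely what the paper does: it proves a separate polytopal version, Theorem~\ref{thm:dischadamardpoly}, whose proof is not a formal transcription of the simplicial one but a fresh induction using Lemma~\ref{lem:BruMani} (each facet collapses onto the star of the face where the distance function is minimized). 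Since stars of faces in a $\CAT(0)$ cube complex are convex, Theorem~\ref{thm:dischadamardpoly} applies at once and gives Corollary~\ref{cor:CubeComplexes}. In short: your first paragraph matches the paper, and your orthoscheme detour for cubes is a legitimate route to the weaker statement that $X$ has an arborescent subdivision, but the missing step — the polytopal version of the main theorem — is a real piece of work that the paper supplies and your proposal leaves open.
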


The case of $\CAT(0)$-cube complexes has been proved independently by Gulbrandsen in his PhD thesis (see \cite{Gul}).  Note also that the corner peeling method used in \cite{CC} provides an alternative proof.

This is a converse of the result proved earlier in \cite{AF} by the authors stating that 
an arborescent locally finite simplicial complex is PL homeomorphic to a locally finite $\CAT(0)$ cubical complex.  
In particular we have the following characterization of arborescence: 

\begin{cornonumber}\label{mainthm:CAT(0)man}
A simplicial complex admits an arborescent triangulation if and only if it is PL homeomorphic to a $\CAT(0)$ cube complex. In particular, for any integer $d$, the open $d$-manifolds admitting an arborescent triangulation are precisely those that admit a $\CAT(0)$ polyhedral metric. 
\end{cornonumber}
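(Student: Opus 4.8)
The plan is to read off both equivalences from the first main theorem, its corollary, the general subdivision result announced in the abstract, and the converse established in \cite{AF}; the only nontrivial glue is a lemma promoting polytopal collapses to simplicial ones.

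Consider first the equivalence ``arborescent triangulation $\Leftrightarrow$ PL homeomorphic to a $\CAT(0)$ cube complex''. The implication $\Rightarrow$ is exactly the converse of the present results recalled after Corollary~\ref{mainthm:CATcube}: an arborescent locally finite simplicial complex is, by \cite{AF}, PL homeomorphic to a locally finite $\CAT(0)$ cube complex. For $\Leftarrow$, suppose $C$ is PL homeomorphic to a locally finite $\CAT(0)$ cube complex $X$. By Corollary~\ref{mainthm:CATcube}, $X$ is arborescent, i.e. $X=\bigcup_i X_i$ with each $X_i$ a finite subcomplex and $X_{i+1}\searrow X_i$. It then remains to pass to a simplicial triangulation, which is where I would invoke the subdivision lemma: for polytopal complexes $P\searrow Q$ one has $\sd P\searrow \sd Q$, by collapsing, one free pair $(\sigma,\tau)$ at a time, the subdivided ball $\sd(\bar\sigma)$ onto the subdivision of $\partial\bar\sigma$ with the face $\tau$ removed. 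Applied uniformly along the filtration this gives $\sd X=\bigcup_i \sd X_i$ with $\sd X_{i+1}\searrow \sd X_i$, so $\sd X$ is an arborescent simplicial complex PL homeomorphic to $C$; hence $C$ admits an arborescent triangulation.

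For the manifold statement, fix an open $d$-manifold $M$. If $M$ admits an arborescent triangulation then, by the equivalence just proved, it is PL homeomorphic to a $\CAT(0)$ cube complex, whose intrinsic piecewise-Euclidean cubical metric is a polyhedral $\CAT(0)$ metric; thus $M$ carries a polyhedral $\CAT(0)$ metric. Conversely, if $M$ carries a polyhedral $\CAT(0)$ metric, then by the general strengthening of Theorem~\ref{mainthm:CATcollapsible} announced in the abstract — every locally finite polyhedral $\CAT(0)$ complex has arborescent barycentric subdivision — the barycentric subdivision of the underlying complex is arborescent, so $M$ admits an arborescent triangulation (and a fortiori, by the first part, is PL homeomorphic to a $\CAT(0)$ cube complex).

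The two citations to \cite{AF} and to the general version of Theorem~\ref{mainthm:CATcollapsible} are used as black boxes; the point requiring genuine care is the subdivision lemma turning a polytopal collapse into a simplicial one, together with the bookkeeping that the collapses remain compatible across the ascending filtration and that local finiteness of $\sd X$ is preserved. I expect this — standard but fiddly — to be the main obstacle, since arborescence is a priori a property of polytopal complexes and must be transported into the simplicial category without disturbing the exhausting sequence.
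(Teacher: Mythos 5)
Your proposal is correct and follows essentially the same route as the paper: the forward implication is Proposition~\ref{thm:collcat} from \cite{AF}, the reverse comes from Corollary~\ref{mainthm:CATcube} (equivalently Theorem~\ref{thm:dischadamardpoly}), and the manifold statement is closed via Theorem~\ref{arborescent}. The subdivision lemma you flag as the ``fiddly'' ingredient is exactly Proposition~\ref{thm:hudson} (cited from \cite{ABpart2}), with the paper invoking \cite{Welker} for the same purpose in Proposition~\ref{thm:ccc}; you are in fact more explicit than the paper, which leaves both the subdivision step and the manifold claim implicit in its brief proof of this corollary.
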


Throughout this paper all polyhedral metrics are supposed to have only {\em finitely many} isometry classes 
of cells. 

We only consider locally finite complexes endowed with polyhedral 
$\CAT(\kappa)$ metrics which are {\em complete geodesic} metric spaces. 
Explicit examples can be obtained from a locally finite triangulation of manifolds 
whose simplices are endowed with constant curvature metrics and have finite distorsion, by (\cite{BH}, I.7A.13 and I.3.7).

Note however that all contractible manifolds do not admit arborescent 
triangulations, as there are specific restrictions on the topology at infinity 
(see \cite{AF}). Nevertheless the interiors of compact contractible manifolds admit CAT(0) metrics, see \cite{AS, AG2}. 

The usual definition of polyhedral metrics from  (\cite{BH}, I.7, Def. 7.37) allows the above condition be satisfied 
for a suitable subdivision of the  polytopal complex. However, for our next result it is important to work with 
a fixed geometric cell decomposition. The second main result gives a complete characterization of complexes admitting a polyhedral $\CAT(0)$ metric, as follows: 

\begin{thmnonumber}[Theorem \ref{arborescent}]\label{mainthm:CATpolyhedral}
Every locally finite complex that is $\CAT(0)$ with a polyhedral metric has an arborescent barycentric subdivision.
\end{thmnonumber}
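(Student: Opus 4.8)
The plan is to deduce the theorem from Theorem~\ref{thm:dischadamard}. Write $\sd C$ for the barycentric subdivision; it is a locally finite simplicial complex homeomorphic to $C$, hence simply connected (a $\CAT(0)$ space being contractible). It therefore suffices to equip $\sd C$ with a polyhedral metric $d'$ that is non-positively curved --- hence $\CAT(0)$ by Cartan--Hadamard --- and in which every simplex is non-obtuse: by the remark following Theorem~\ref{thm:dischadamard} all vertex stars are then convex, and that theorem applies. One should not expect the metric inherited from $C$ to serve: it makes $\sd C$ isometric to the $\CAT(0)$ space $C$, but barycentrically subdividing a Euclidean cell whose facets differ greatly in size produces obtuse simplices, and one checks that the star of an original vertex of $C$ then fails to be geodesically convex. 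So $d'$ must genuinely differ from the inherited metric.

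To construct $d'$, use that $\sd C$ is the order complex of the face poset $P$ of $C$: a simplex is a chain $\tau_0\subsetneq\cdots\subsetneq\tau_k$ of faces, and in particular $\sd C$ is a flag complex. One assigns to each such chain a Euclidean (or hyperbolic) simplex depending only on the isometry type of the faces $\tau_i$ --- this secures consistency of the gluings and the finitely-many-isometry-types hypothesis --- subject to: (i) every simplex non-obtuse; (ii) for every vertex $\hat\sigma$ of $\sd C$, the induced piecewise spherical metric on $\Lk(\hat\sigma,\sd C)$ is $\CAT(1)$ (equivalently, $\sd C$ is locally $\CAT(0)$ at $\hat\sigma$; in dimension two this reads: the angles at $\hat\sigma$ of the incident triangles sum to at least $2\pi$). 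A tempting candidate is the orthoscheme metric --- each chain realized as a right-angled path simplex with edge squared-lengths $g(\tau_i)-g(\tau_{i-1})$ for a monotone weight $g\colon P\to\R_{>0}$ --- which handles (i) and consistency automatically, leaving (ii) to be arranged by a suitable choice of $g$ and, where needed, a controlled deformation away from pure orthoschemes.

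The heart of the matter is showing that (i) and (ii) can be met simultaneously. The two requirements pull against each other: at the barycenters $\hat\sigma$ of top cells the simplices of $\sd\sigma$ crowd around $\hat\sigma$, so they may not be made too thin there, while at an original vertex $v$ of $C$ --- whose valence may be only as large as $\CAT(0)$ alone forces --- the incident simplices must not be too thin either, and killing an obtuse angle somewhere typically thins a simplex elsewhere. I would untangle this by induction on $\dim C$: a spherical join is $\CAT(1)$ iff its factors are; combinatorially $\Lk(\hat\sigma,\sd C)=\sd(\partial\sigma)\ast\sd(\Lk(\sigma,C))$; and $\Lk(\sigma,C)$ is itself a $\CAT(1)$ piecewise spherical complex of strictly smaller dimension, so the inductive hypothesis metrizes its subdivision, while $\partial\sigma$ is a sphere whose barycentric subdivision one metrizes directly, using --- as the sole input of positivity --- that $C$ carries a $\CAT(0)$ metric at all, so that the angle sums at the vertices of $C$ are at least $2\pi$. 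The low-dimensional base cases are checked by hand. Granting $d'$, the complex $(\sd C,d')$ is locally finite, simplicial, $\CAT(0)$ with a polyhedral metric and with convex vertex stars, so Theorem~\ref{thm:dischadamard} yields that $\sd C$ is arborescent. (If the metric construction proves too delicate, one can instead re-run the proof of Theorem~\ref{thm:dischadamard} on $\sd C$ with the metric inherited from $C$: exhaust $C$ by the convex metric balls $\overline B(p,r_n)$, $r_n\uparrow\infty$, about a generic point $p$, and collapse the slab of $\sd C$ between two consecutive balls via the discrete vector field that pairs a chain $\tau_0\subsetneq\cdots$ according to the carrier of the point of $\tau_0$ closest to $p$; passing to $\sd C$ is precisely what turns this "nearest-point carrier" rule into an acyclic matching with a single critical cell per slab.)
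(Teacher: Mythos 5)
Your primary route rests on a claim you do not prove and which is far from obvious: that $\sd C$ can always be remetrized as a polyhedral $\CAT(0)$ complex with only non-obtuse simplices. You identify this as the heart of the matter, but the sketch does not close it. In particular: (a) the factor-wise $\CAT(1)$ check via spherical joins requires the two factors of $\Lk(\hat\sigma,\sd C)\cong\sd(\partial\sigma)\ast\sd(\Lk(\sigma,C))$ to be placed mutually orthogonally at $\hat\sigma$ in the new metric, and it is not shown that the orthoscheme (or any other) candidate achieves this; (b) orthoscheme complexes of posets are known to be $\CAT(0)$ only under restrictive hypotheses (bounded-rank lattices, after Brady--McCammond, and even for modular lattices the question is open), while the face poset of a $\CAT(0)$ polyhedral complex need not be a lattice; and (c) the inductive hypothesis, as a statement about $\CAT(0)$ complexes, does not directly apply to the $\CAT(1)$ link $\Lk(\sigma,C)$---a spherical analogue would be needed and is not supplied. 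The appeal to ``a suitable choice of $g$ and, where needed, a controlled deformation away from pure orthoschemes'' is precisely the argument that is missing.

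Your fallback is much closer to the paper's proof, but framing it as re-running the proof of Theorem~\ref{thm:dischadamard} cannot work as stated: that proof requires convex vertex stars, which---as you correctly observe earlier---$\sd C$ does not have under the inherited metric. What the paper actually does is repeat the argument of Theorem~\ref{thmconvexcollapse} in the locally finite setting: fix a generic point $p$, order the faces $\sigma\in\mathrm{M}(X)$ (those on which $\min\mathrm{d}_p$ is attained in the relative interior) by that minimum, pass to a derived order on the vertices of $\sd X$, and delete those vertices one at a time in decreasing order, proving that each deletion is a collapse by identifying $\Lk(v_i,\Sigma_i)$ with a derived neighborhood inside a lower-dimensional $\CAT(1)$ link and invoking the inductive hypothesis together with Lemmas~\ref{lem:ccoll}--\ref{lem:uc}. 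Your nearest-point-carrier pairing is the right intuition, but the real content---the four-case link analysis and the dimension induction---does not appear in your sketch.
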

This improves the second part of  Theorem 1 from \cite{AF}.   
The proofs build upon the Forman discrete Morse theory (\cite{FormanUSER}) and its 
developments leading to the collapsibility results of \cite{ABpart1,ABpart2}.   

{\bf Acknowledgements.} The authors are grateful to Victor Chepoi, Craig Guilbault and 
Daniel Gulbrandsen for useful discussions. The first author is supported by the ISF, the CNRS and the Horizon Europe ERC  HodgeGeoComb Grant no. 101045750.

\section{Preliminaries} \label{sec:Preliminaries}

\subsection{Definitions}
A \emph{polytope} is the convex hull of finitely many points in a simply connected space of constant curvature. 
When the curvature is positive we ask the points to belong to some open hemisphere-sphere. 
A \emph{polytopal complex} is a finite collection of polytopes in the ambient space such that the intersection of any two polytopes is a face of both. An \emph{intrinsic polytopal complex} is a collection of polytopes that are attached along isometries of their faces so that the intersection of any two polytopes is a face of both. 

The \emph{underlying space} $|C|$ of a polytopal complex $C$ is the topological space obtained by taking the union of its faces. A \emph{triangulation} 
of a topological space $X$ is a simplicial complex with a homeomorphism $|C|\to X$. Two polytopal complexes $C,\, D$ are \emph{equivalent} if their face posets are isomorphic, in which case  their underlying spaces are homeomorphic. 

A \emph{subdivision} of a polytopal complex $C$ is a polytopal complex $C'$ with the same underlying space of $C$, such that for every face $F'$ of $C'$ there is some face $F$ of $C$ for which $F' \subset F$. Two polytopal complexes $C$ and $D$ are \emph{PL equivalent} if 
they have equivalent subdivisions.  A triangulation of a topological manifold is \emph{PL}  if the star of every face  is PL equivalent to the simplex of the same dimension.

A subdivision  $\sd  C$ of a polytopal complex $C$ obtained by stellarly subdividing all its faces in decreasing  order of their dimensions is called \emph{derived subdivision}, see \cite{Hudson}. An example of a derived subdivision is the barycentric subdivision, which uses as vertices the barycenters of all faces of $C$.

If $C$ is a polytopal complex and $A$ is a subset, the \emph{restriction $\RS(C,A)$ of $C$ to $A$} is the inclusion-maximal subcomplex $D$ of $C$ such that $D$ lies in $A$. The \emph{star} of $\sigma$ in $C$, denoted by $\St(\sigma, C)$, is the minimal subcomplex of $C$ that contains all faces of $C$ containing $\sigma$. The \emph{deletion} $C-D$ of a subcomplex $D$ from $C$ is the subcomplex of $C$ given by $\RS(C,  C{\setminus} \rint{D})$.
The \emph{(first) derived neighborhood} $N(D,C)$ of $D$ in $C$ is the simplicial complex
\[N(D,C):=\bigcup_{\sigma\in \sd D} \St(\sigma,\sd C). \]

\subsection{Polyhedral CAT(k) spaces and convex subsets} 
Recall that a metric space $(X,d)$ is  {\em geodesic} (also called a length space or an inner metric space) if every two points 
of it can be joined by a minimizing geodesic, namely a curve whose length equals the distance between the points. 
The length of the continuous path $\gamma:[0,1]\to X$ is defined as 
\[ \sup_{r, 0=t_0 <t_1 <\cdots t_r<t_{r+1}=1}\; \; \; \;  \sum_{j=0}^{r} d(\gamma(t_j),\gamma(t_{j+1}) \]
A geodesic triangle in $(X,d)$ satisfies the CAT($\kappa$) inequality if the geodesic comparison triangle 
with sides of the same length within  the simply connected curvature $\kappa$ Riemannian surface has 
distances between pairs of boundary points larger than those between corresponding pairs of points in the initial triangle. 
Moreover, the geodesic metric space $(X,d)$ is CAT(0) if  every geodesic triangle, which for $\kappa >0$ has perimeter less than $\frac{2\pi}{\sqrt{\kappa}}$, satisfies the CAT($\kappa$) inequality.

Given two points $a,b$ in a length space $X$, we denote sometimes by $|ab|$ the \emph{distance} between $a$ and $b$, which is  the minimum of the lengths of all curves from $a$ to $b$.
In a $\CAT(0)$ space, any two points are connected by a unique geodesic. The same holds for $\CAT(k)$ spaces ($k >0$), as long as the two points are at distance $ < \pi k^{-\frac{1}{2}}$.

Let $c$ be a point of $X$ and let $K$ be a closed subset of $X$, not necessarily convex. We denote by $\pi_c(K)$ the subset of the points of $K$ at minimum distance from $c$, the {\em closest-point projection} of $c$ to $K$. In case $\pi_c(K)$ contains a single point, with abuse of notation we write $\pi_c(K) = x$ instead of $\pi_c(K) = \{x\}$. This is always the case when $K$ is convex, as the following well-known lemma shows. 

\begin{lemma}[{\cite[Prop.\ 2.4]{BH}}]\label{lem:unique}
Let $X$ be a connected $\CAT(k)$-space, $k\le 0$.  Let $c$ be a point of $X$. Then the function ``distance from $c$'' has a unique local minimum on each closed convex subset $K$ of $X$. Similarly, if $k>0$ and $K$ is at distance less than $\tfrac{1}{2}\pi k^{\nicefrac{-1}{2}}$ from $c$, then there exists a unique local minimum of distance at most $\tfrac{1}{2}\pi k^{\nicefrac{-1}{2}}$ from $c$.
\end{lemma}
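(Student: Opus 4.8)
The plan is to exploit the convexity of the distance function $t\mapsto d(c,\gamma(t))$ along geodesics $\gamma$, which is the defining feature of $\CAT(0)$ and holds on sufficiently small scales in $\CAT(k)$ with $k>0$, together with an elementary midpoint estimate in the comparison surface. I argue first for $k\le 0$. For \emph{existence of a minimizer}, put $d_0=\inf_{x\in K}d(c,x)$ and choose $x_n\in K$ with $d(c,x_n)\to d_0$. For indices $i,j$ let $m_{ij}$ be the midpoint of the geodesic $[x_i,x_j]$; since $K$ is convex, $m_{ij}\in K$, so $d(c,m_{ij})\ge d_0$. Comparing the geodesic triangle $c,x_i,x_j$ with its Euclidean comparison triangle and using the median-length formula in the plane, the $\CAT(0)$ inequality gives
\[ d(c,m_{ij})^2\ \le\ \tfrac12\,d(c,x_i)^2+\tfrac12\,d(c,x_j)^2-\tfrac14\,d(x_i,x_j)^2 . \]
Hence $d(x_i,x_j)^2\le 2\,d(c,x_i)^2+2\,d(c,x_j)^2-4d_0^2\to 0$, so $(x_n)$ is Cauchy. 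As $X$ is complete and $K$ closed, $x_n\to p$ for some $p\in K$ with $d(c,p)=d_0$; in particular a global, hence local, minimum exists.

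Next I show \emph{every local minimum equals this global minimum}. Let $x\in K$ be any local minimum of $d(c,\cdot)|_K$ and let $\gamma\colon[0,1]\to X$ be the geodesic from $p$ to $x$; by convexity of $K$ it lies in $K$, and $g(t):=d(c,\gamma(t))$ is convex. Near $t=1$ one has $g(t)\ge g(1)$ because $x$ is a local minimum; since for a convex function the slopes $\tfrac{g(1)-g(t)}{1-t}$ are monotone in $t$, this forces $g(t)\ge g(1)$ for all $t\in[0,1)$, in particular $d_0=g(0)\ge g(1)=d(c,x)\ge d_0$. Thus $d(c,x)=d_0$, so $x$ lies in the set $M$ of global minimizers. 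Finally, \emph{$M$ is a single point}: suppose $x,y\in M$ with $x\ne y$, and let $m$ be the midpoint of $[x,y]\subset K$. The comparison triangle of $c,x,y$ is isosceles with legs $d_0$ and base $\ell:=d(x,y)>0$, so the distance from $\bar c$ to the midpoint of the base equals $\sqrt{d_0^2-\ell^2/4}<d_0$; by $\CAT(0)$, $d(c,m)\le\sqrt{d_0^2-\ell^2/4}<d_0$, contradicting $m\in K$. Hence $M=\{p\}$, and $p$ is the unique local minimum.

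For the \emph{case $k>0$}, one runs the same three steps with Euclidean comparison triangles replaced by comparison triangles in the model surface of constant curvature $k$. The hypothesis that $K$ lies within distance $<\tfrac12\pi k^{-1/2}$ of $c$ guarantees that all geodesics that arise are unique and that the points involved stay in a ball where the $\CAT(k)$ comparison applies; the spherical analogue of the median estimate — in an isosceles spherical triangle with legs of length $<\tfrac12\pi k^{-1/2}$ and positive base, the median from the apex is strictly shorter than a leg — again yields both the existence of a minimizer at distance $\le\tfrac12\pi k^{-1/2}$ and uniqueness among local minima at that scale.

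The \textbf{main obstacle}: for $k\le 0$ the only real content is the midpoint/median estimate together with the monotonicity-of-slopes observation, and nothing can go wrong globally. The delicate point lies entirely in the $k>0$ case, where one must verify at each step that every auxiliary point invoked — the midpoints $m_{ij}$ of the approximating sequence, the limit $p$, and the foot of the median — remains within distance $\le\tfrac12\pi k^{-1/2}$ of $c$, so that geodesics stay unique and the spherical comparison inequalities remain valid. This is exactly what the distance hypothesis on $K$ is designed to ensure, but it requires care to track through the argument.
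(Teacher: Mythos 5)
The paper does not prove this lemma at all --- it is quoted directly from the reference as \cite[Prop.~2.4]{BH} --- and your argument is essentially the standard proof from that source: a minimizing sequence combined with the median (CN) comparison inequality gives existence and uniqueness of the global minimizer, and convexity of $t\mapsto d(c,\gamma(t))$ along the geodesic in $K$ promotes any local minimum to a global one. The proof is correct; the only caveats are that the existence step uses completeness of $X$ (or of $K$ in the induced metric), which is not stated in the lemma but is a standing assumption of the paper, and that in the $k>0$ case the hypothesis is $d(c,K)<\tfrac12\pi k^{-1/2}$ rather than $K\subset B(c,\tfrac12\pi k^{-1/2})$, so one should first restrict to the convex set $K\cap \overline{B}(c,\tfrac12\pi k^{-1/2})$ before running your three steps.
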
  

Classical results show that every open contractible $m$-manifold $M$ is triangulable, namely there exists a {\em locally finite simplicial complex} $\Delta$ homeomorphic to $M$ (see e.g. \cite{KS}, Annex B, p. 300, Annex C, p.315). 
The CAT($\kappa$) metrics which we consider on $M$ are supposed to be {\em polyhedral}, namely there exists a suitable 
triangulation $\Delta$ such that every cell of $\Delta$, when equipped with the induced metric,  
is isometric to the convex hull of a finite set of points in the hyperbolic or Euclidean 
space of curvature $\kappa\leq 0$, or within a hemisphere when $\kappa> 0$ (see \cite{BH}, I.7, Def. 7.37). For instance the {\em piecewise flat equilateral metric}   is the length metric obtained when 
simplices or cubes are Euclidean and have all their edges of the same (unit) length. 

Throughout this paper all polyhedral metrics are supposed to have only {\em finitely many} isometry classes 
of cells. 

In \cite{AF} we pointed out that the results presented there were valid more generally  for $\CAT(0)$ 
metrics for which the restriction to every cell of $\Delta$ is a  
piecewise analytic Riemannian metric whose curvature is bounded above and below by two constants independent on the 
cell and that the distorsion of cells is uniformly bounded. The polyhedral requirements in this article seem 
to be necessary for most results presented here.

Note that we only consider topological manifolds endowed with polyhedral 
CAT($\kappa$) metrics which are {\em complete geodesic} metric spaces. 
Explicit examples can be obtained from a locally finite triangulation of $M$ 
whose simplices are endowed with constant curvature metrics and have finite distorsion, by (\cite{BH}, I.7A.13 and I.3.7).

\subsection{Tangent cones, geodesics and links}
We define the notion of \emph{link} with a metric approach. However we will restrict ourselves to the case where 
the space $X$ is a polytopal complex endowed with a $\CAT(\kappa)$ polyhedral metric. In particular every cell $\sigma$ 
of $X$ is realized as a totally geodesic convex polytope in the space form of constant curvature $\kappa$.  

Let $p$ be any point of $X$. By $\TT_p X$ we denote the {\em tangent cone} of $X$ at $p$, namely the union $\bigcup_{p\in \sigma} T_p\sigma$ of all tangent spaces of cells $\sigma$ containing $p$, where we identify $T_p\tau$ and its image 
within $T_p\sigma$, for any cell $\tau\subset \sigma$.   
Each tangent space $T_p\sigma$ inherits a positive bilinear form 
from the constant curvature Riemannian metric on $\sigma$. The various scalar products are compatible on intersections since 
cell embeddings are totally geodesic.  

Note that two points $p,q$ in the same cell $\sigma$ determine an unique geodesic  $\gamma$ joining them. 
This is clear for $\kappa\leq 0$, and implied by our assumptions that spherical cells be embedded into hemispheres, if $\kappa >0$. Moreover, as cells are totally geodesic submanifolds of the corresponding space form 
the geodesic $\gamma$ is smooth, actually real analytic, and the tangent vector is well-defined at any of its points. 
Further, given a point $p$ of $X$ and a vector $v\in T_pX$, there exists an unique geodesic $\gamma$ 
issued from $p$ whose tangent vector is $v$. 
    
This allows us to define the tangent cone $T_pF$ to a point $p$ of a convex set $F\subset X$. 
Assuming $p\in \partial F$, as otherwise it is clear, we define 
$T_pY$ to be the cone of those vectors $v\in T_pY$ for which there exists $q\in F$, in a given neighborhood of $q$ within $X$, with the property that the geodesic $\gamma$ joining $p$ to $q$ has tangent vector $v$ at $p$ and lives entirely in a 
cell of $X$. When $\partial F$ is a polyhedron, $T_pY$ is a closed subcone of $T_pY$.

Let $\TT^1_p X$ denote the set of unit vectors in the cone $\TT_p X$.  If $Y$ is any subspace of $X$ and $p\in Y$ such that 
the tangent cone $T_pY$ makes sense, then $\RN_{(p,Y)} X$ denotes the cone of $\TT_p X$ spanned by the vectors orthogonal to $\TT_p Y$. If $p$ is in the interior of $Y$, we define $\RN^1_{(p,Y)} X:= \RN_{(p,Y)} X \cap \TT^1_p Y$. 

This situation occurs, for instance,  when 
$Y$ is locally a finite union of manifolds, a convex subspace or the boundary of a convex domain. 
Consider  a convex subset $Y$ of a the underlying space $X$ of a polytopal complex endowed with a polyhedral 
 $\CAT(\kappa)$ metric and a point $p$ which belongs to its boundary $\partial Y$.
Note that  $Y$ is then a topological manifold with boundary. Being convex means that at every point $Y$ has 
totally geodesic support hyperplanes. Recall that a totally geodesic hyperplane $H$ contained in a cell $\sigma$ is 
a support hyperplane at $p$ for the convex set $Y\subset \sigma$ if it contains $p$ and $Y$ is contained in the closure 
of one connected component $H_+$ of $\sigma\setminus H$.   The tangent cone $T_pY$ consist of the intersection of all components 
$H_+$ containing $Y$, where $H$ denotes the totally geodesic support hyperplanes at $p$. 
Moreover, $\RN_{(p,\partial Y)} X= \bigcap _{H}\RN_{(p,H)} X$, where $H$ belongs to the set of totally geodesic hyperplane supports at $p$.

If $\tau$ is any face of a polytopal complex $C$ containing a nonempty face $\sigma$ of $C$, then the set $\RN^1_{(p,\sigma)} \tau$ of unit tangent vectors in $\RN^1_{(p,\sigma)} |C|$ pointing towards $\tau$ forms a spherical polytope $P_p(\tau)$, isometrically embedded in $\RN^1_{(p,\sigma)} |C|$. The family of all polytopes $P_p(\tau)$ in $\RN^1_{(p,\sigma)} |C|$ obtained for all $\tau \supset \sigma$ forms a polytopal complex, called the \emph{link} of $C$ at $\sigma$ and denoted by $\Lk_p(\sigma, C)$. If $C$ is a polytopal complex endowed with a polyhedral $\CAT(\kappa)$ metric, then $\Lk_p(\sigma, C)$ is naturally realized in $\RN^1_{(p,\sigma)} X$. When $X$ is a PL $d$-manifold $\RN^1_{(p,\sigma)} X$ is isometric to a sphere of dimension $d-\dim \sigma -1$, and will be considered as such. Up to ambient isometry $\Lk_p(\sigma, C)$ and  $\RN^1_{(p,\sigma)} \tau$ in $ \RN^1_{(p,\sigma)} |C|$ or $\RN^1_{(p,\sigma)} X$ do not depend on $p$.

If $C$ is simplicial, and $v$ is a vertex of $C$, then $\Lk(v,C)$ is combinatorially equivalent to \[(C-v)\cap \St(v,C)=\St(v,C)-v.\]By convention, $\Lk(\emptyset, C):=C$. If $C$ is a simplicial complex, and $\sigma$, $\tau$ are faces of $C$, then $\sigma\ast \tau$ is the minimal face of $C$ containing both $\sigma$ and $\tau$ (assuming it exists). If $\sigma$ is a face of $C$, and $\tau$ is a face of $\Lk(\sigma,C)$, then $\sigma \ast \tau$ is the face of $C$ with $\Lk(\sigma,\sigma \ast \tau)=\tau$. In both cases, the operation~$\ast$ is called the \emph{join}.

\subsection{Discrete Morse theory  after Forman and arborescence}

The \emph{face poset} $(C, \subseteq)$ of a polytopal complex $C$ is the set of nonempty faces of $C$, ordered with respect to inclusion. By $(\mathbb{R}, \le)$ we denote the poset of real numbers with the usual ordering. A \emph{discrete Morse function} is an order-preserving map $f$ from $(C, \subseteq)$ to $(\mathbb{R}, \le)$, such that the preimage $f^{-1}(r)$ of any number $r$ consists of either one element, or of two elements one contained in the other. A \emph{critical cell} of $C$ is a face at which $f$ is strictly increasing.  

The function $f$ induces a perfect matching on the non-critical cells: two cells are matched whenever they have identical image under $f$. This is called \emph{Morse matching}, and it is usually represented by a system of arrows: Whenever $\sigma \subsetneq \tau$ and $f(\sigma) = f(\tau)$, one draws an arrow from the barycenter of $\sigma$ to the barycenter of $\tau$. We consider two discrete Morse functions  \emph{equivalent} if they induce the same Morse matching. Since any Morse matching pairs together faces of different dimensions, we can always represent a Morse matching by its  associated partial function  
$\Theta$ from $ C$ to itself, defined on a subset 
of $C$ as follows:

\[
\Theta (\sigma) \ := \ 
\left\{
\begin{array}{cl}
\sigma & \textrm{if $\sigma$ is unmatched}, \\
\tau & \textrm{if $\sigma$ is matched with $\tau$ and $\dim \sigma < \dim \tau$}.
\end{array}
\right.
\]

A {\it discrete vector field} $V$ on a polytopal complex $C$ is a collection of pairs $(\sigma,\Sigma)$ of faces such that $\sigma$ is a codimension-one face of $\Sigma$, and no face of $C$ belongs to two different pairs of $V$. 
A \emph{gradient path} in $V$ is a concatenation of pairs of $V$
\[ (\sigma_0, \Sigma_0), (\sigma_1, \Sigma_1),  \ldots, (\sigma_k, \Sigma_k),\, k\ge 1,\]
so that for each $i$ the face $\sigma_{i+1}$ is a codimension-one face of $\Sigma_i$ different from $\sigma_i$. A gradient path is \emph{closed} if $\sigma_0 = \sigma_k$ for some $k$ (that is, if the gradient path forms a closed loop). A discrete vector field $V$ is a {\em Morse matching}  if $V$ contains no closed gradient paths \cite{FormanADV,FormanUSER}.

Inside a polytopal complex $C$, a \emph{free} face $\sigma$ is a face strictly contained in only one other face of $C$. An \emph{elementary collapse} is the deletion of a free face $\sigma$ from a polytopal complex~$C$. We say that $C$ \emph{(elementarily) collapses} onto $C-\sigma$, and write $C\searrow_e C-\sigma.$ In the opposite direction an inverse elementary collapse is called a dilatation and we write $C-\sigma\nearrow_e C$. 
We also say that the complex $C$ \emph{collapses} to a subcomplex $C'$, and write~$C\searrow C'$, if $C$ can be reduced to $C'$ by a sequence of elementary collapses. A \emph{collapsible} complex is a complex that collapses onto a single vertex. 

Moreover, a locally finite complex is \emph{arborescent} if it is obtained 
from a vertex by infinitely many dilatations. Alternatively, there exists an ascending union by 
finite subcomplexes $K_i$, such that $K_{i+1}$ collapses onto $K_i$, for every $i$. 

Collapsibility only depends on the combinatorial type  and does not depend on the geometric realization of a polytopal complex. Discrete Morse theory provides a simple criterion for collapsibility, as follows:

\begin{theorem}[Forman \cite{FormanADV}]\label{lem:equivalence}
A finite polytopal complex $C$ is collapsible if and only if $C$ admits a discrete Morse function with only one critical face.
\end{theorem}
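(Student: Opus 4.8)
\textbf{Proof proposal for Theorem~\ref{lem:equivalence} (Forman's criterion).}

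The plan is to prove both implications via the standard dictionary between discrete Morse functions and collapsing sequences. First I would reduce the statement to its matching-theoretic form: by the discussion preceding the theorem, a discrete Morse function $f$ on $C$ is determined up to equivalence by the partial pairing $\Theta$ it induces, together with its set of critical cells, and the condition ``$f$ has exactly one critical face'' translates to ``the associated Morse matching on the face poset of $C$ pairs every cell except a single one, and contains no closed gradient path.'' So the theorem becomes: $C$ is collapsible if and only if the face poset of $C$ admits an acyclic matching leaving exactly one unmatched cell.

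For the easy direction ($\Rightarrow$), suppose $C$ collapses onto a vertex $v$ through a sequence of elementary collapses. Each elementary collapse removes a free face $\sigma$ together with the unique face $\Sigma$ properly containing it; I would pair $(\sigma,\Sigma)$ and record $v$ as the lone critical cell at the end. This collection of pairs is a matching by construction (each face is deleted exactly once, as part of exactly one pair), and it is acyclic because the collapsing order furnishes a strictly decreasing ``time of deletion'' function that any gradient path must respect, ruling out closed loops; concretely, a closed gradient path would force some $\Sigma_i$ to be deleted before one of its free faces $\sigma_{i+1}$ was, a contradiction. Hence $C$ has a discrete Morse function with one critical cell.

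For the substantive direction ($\Leftarrow$), suppose $V=\Theta$ is an acyclic matching with a single unmatched cell. The key step is to show that acyclicity forces the existence of a free face that is matched \emph{downward}, i.e.\ a codimension-one face $\sigma$ of a unique $\Sigma$ with $\Theta(\sigma)=\Sigma$ and $\sigma$ contained in no other face of $C$. I would argue this by contradiction: if every matched lower face $\sigma$ of a top-dimensional matched pair were contained in a second face, one could extend a gradient path indefinitely inside a finite complex, hence produce a closed loop, contradicting that $V$ is a Morse matching. Once such a free pair $(\sigma,\Sigma)$ is found, perform the elementary collapse $C\searrow_e C-\sigma$; the restriction of $V$ to $C-\sigma$ is still an acyclic matching (removing two paired cells destroys no gradient path among the rest, and cannot create a closed one), still with exactly one unmatched cell. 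Iterating, since $C$ is finite, we reduce $C$ to the single unmatched cell, which must therefore be a vertex (a positive-dimensional cell has faces, and all of those would have to be matched among themselves, impossible by a dimension/cardinality parity count — or simply because the last surviving complex is a single face with all its proper faces already removed). This exhibits $C$ as collapsible.

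The main obstacle is the inductive step in the $(\Leftarrow)$ direction: carefully establishing that an acyclic matching on a finite poset always admits a ``collapsible'' free pair to peel off, and that acyclicity is preserved under the peeling. This is where Forman's argument genuinely uses the absence of closed gradient paths rather than merely the matching property, and it is the crux of why discrete Morse theory computes the homotopy type correctly. Everything else — the translation between $f$, $\Theta$, and the arrow system, and the bookkeeping of deletion orders — is routine.
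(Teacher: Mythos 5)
The paper states this theorem with attribution to Forman and supplies no proof of its own, so there is no in-paper argument to compare against; your sketch must stand on its own. Your overall route is the standard one and the $(\Rightarrow)$ direction is fine: recording the pair $(\sigma,\Sigma)$ removed at each elementary collapse produces a discrete vector field, and strict monotonicity of ``time of removal'' along any gradient path rules out closed paths.

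The $(\Leftarrow)$ direction has the right architecture (find a codimension-one face that is free and matched with its unique coface; peel it off; note the restricted matching is still acyclic with the same single unmatched cell; iterate), but it has a genuine gap at the termination step. Your cycle-hunting argument for the existence of a free, upward-matched $(d-1)$-face uses that \emph{every} top-dimensional cell is matched with one of its facets --- which fails precisely if the unique critical cell lies in the current top dimension, since a critical cell has no matched partner through which the path can be extended. So the peeling can a priori get stuck at the dimension of the critical cell, and your two parenthetical justifications that the critical cell must be a vertex do not close this: ``a dimension/cardinality parity count'' is not an argument, and ``the last surviving complex is a single face with all its proper faces already removed'' describes a state the process never reaches when the critical dimension is positive (a positive-dimensional face cannot be a subcomplex on its own). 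The correct, short fix is a separate lemma: starting from any vertex $v_0$, set $v_{i+1}$ to be the opposite endpoint of the edge $\Theta(v_i)$ whenever $v_i$ is matched, and stop when $v_i$ is unmatched. Finiteness plus acyclicity (a repetition would be a closed gradient path in the $(0,1)$-stratum) forces this to halt at an unmatched, hence critical, vertex; since there is exactly one critical cell, it is that vertex, and your peeling argument then runs unobstructed through every dimension down to it.
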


The following is an immediate extension of Forman's result to infinite complexes: 

\begin{theorem}\label{lem:equivalenceinf}
A locally finite polytopal complex $C$ is arborescent if $C$ admits a discrete Morse function with only one critical face.
\end{theorem}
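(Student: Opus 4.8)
The plan is to run Forman's proof of the finite case (Theorem~\ref{lem:equivalence}) along an exhaustion of $C$, so that the single collapsing sequence of a collapsible complex becomes a telescope of collapses between finite subcomplexes. First one pins down the critical face: a discrete Morse function with a single critical face forces $C$ to be connected (distinct connected components would each carry at least one critical face), and Forman's Morse inequalities --- applied on a finite sublevel complex, which by the argument below is collapsible and hence has $\beta_0=1$ and $\beta_k=0$ for $k>0$ --- force that face to be $0$-dimensional; call this vertex $v$.

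Now fix the exhaustion. For $r\in\R$ let $C_{\le r}:=\{\sigma\in C:\ f(\sigma)\le r\}$; as $f$ is order-preserving this is a subcomplex, and since two matched faces carry the same $f$-value, no matched pair has exactly one of its two faces in $C_{\le r}$. Thus the Morse matching associated with $f$ restricts to a Morse matching on each $C_{\le r}$, and for $r\ge f(v)$ its only critical face is $v$. Choosing an increasing sequence $r_1<r_2<\cdots$ with $r_1\ge f(v)$ and $\bigcup_i C_{\le r_i}=C$ yields an exhaustion $\{v\}\subseteq C_{\le r_1}\subseteq C_{\le r_2}\subseteq\cdots$ of $C$ by \emph{finite} subcomplexes. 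That these are finite uses that $f$ may be taken proper, i.e. with finite sublevel complexes; this is not automatic for an arbitrary discrete Morse function but is harmless in the applications of this paper, where the Morse function is built to refine the distance from a base vertex, a proper function because $C$ is locally finite.

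It remains to verify $C_{\le r_1}\searrow\{v\}$ and $C_{\le r_{i+1}}\searrow C_{\le r_i}$ for all $i$. The first is Theorem~\ref{lem:equivalence}, since $C_{\le r_1}$ is finite and carries a discrete Morse function with the single critical face $v$. For the second, set $X:=C_{\le r_{i+1}}$ and $X_0:=C_{\le r_i}$: by the previous paragraph the matching of $C$ restricts to an acyclic matching on $X$ pairing every face of $X\setminus X_0$ with another face of $X\setminus X_0$, while its unique critical face $v$ lies in $X_0$. This is the hypothesis of the relative version of Forman's collapsing theorem: if a finite complex $X$ carries an acyclic matching under which a subcomplex $X_0$ contains all critical faces and no matched pair is split by $X_0$ (each matched pair lying entirely in $X_0$ or entirely in $X\setminus X_0$), then $X\searrow X_0$. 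Granting this, stacking the collapses exhibits $C$ as an ascending union of finite subcomplexes each collapsing onto the previous one, i.e. as arborescent.

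The main obstacle is the relative collapsing statement invoked in the last step. It is the one place where acyclicity of the matching is genuinely used: one argues by induction on $|X\setminus X_0|$ that the absence of closed gradient paths guarantees a matched pair $(\sigma,\Sigma)$ with $\sigma$ a free face of the current complex, performs the elementary collapse deleting $\{\sigma,\Sigma\}$, and recurses. This is standard --- it is precisely what underlies Theorem~\ref{lem:equivalence} and the discrete Morse arguments of \cite{ABpart1,ABpart2} --- but it should be stated and checked; the only other point needing care, flagged above, is to ensure one works with a discrete Morse function whose sublevel complexes are finite.
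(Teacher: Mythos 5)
The paper gives no proof here, calling the result an ``immediate extension of Forman's result to infinite complexes''; your argument fills in exactly what is meant: filter $C$ by sublevel subcomplexes, observe that the Morse matching restricts to each (matched pairs share an $f$-value, so no pair is split), and apply the relative finite Forman collapsing theorem at each stage to stack the collapses into an arborescence. The properness caveat you raise is the one genuine subtlety the paper glosses over: a discrete Morse function on a locally finite complex need not have finite sublevel complexes, so the hypothesis should really be read as ``proper discrete Morse function'' (equivalently, an acyclic matching admitting a proper realization). As you note, this is harmless in context --- every Morse matching the paper uses comes from a proper continuous function (the distance from a basepoint on a locally finite complex with finitely many isometry types of cells), and its sublevel complexes are automatically finite. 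Two minor points: your Morse-inequality argument that the critical face is a vertex is fine, but once properness is in hand there is a one-line alternative --- the vertex of minimal $f$-value must be critical, since a vertex $w$ matched to an edge $e$ would satisfy $f(w)=f(e)>f(w')$ for the other endpoint $w'$ of $e$; and your parenthetical claim that each connected component carries a critical face also quietly uses properness (an unbounded component with no critical cell is possible for a non-proper matching), but again follows immediately once minimal vertices exist.
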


Collapsible complexes are contractible and manifolds which have PL triangulations which are collapsible are necessarily PL balls \cite{Whitehead}. In the same way arborescent complexes are contractible.  We will prove later that an open manifold which has an arborescent PL triangulation is PL homeomorphic to the Euclidean space (see Proposition \ref{openwhitehead}). 

Note that there exist in every dimension $d\geq 4$ compact contractible manifolds which have non simply connected boundary and hence they are 
not homeomorphic to the ball. Every such contractible manifold admits  some 
collapsible non PL triangulation. 
  
On the other hand  an open contractible manifold has not necessarily a  
(even non PL) triangulation which is arborescent. When it does, it is PL homeomorphic to a cubical complex (see \cite{AF}) and in particular its topology at infinity is quite restricted.

We now state the following lemmas from \cite{ABpart1}, for further use: 

\begin{lemma}\label{lem:ccoll}
Let $C$ be a simplicial complex, and let $C'$ be a subcomplex of $C$.  Then the cone over base $C$ collapses to the cone over $C'$.
\end{lemma}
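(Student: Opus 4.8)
The statement to prove is Lemma~\ref{lem:ccoll}: if $C'$ is a subcomplex of a simplicial complex $C$, then the cone $v\ast C$ collapses onto $v\ast C'$. The plan is to build the required sequence of elementary collapses by peeling off, one at a time, the faces of $C$ that do \emph{not} lie in $C'$, each dragged along by the cone point $v$. The key observation is that for a face $\tau$ of $C$ with $\tau \notin C'$, the pair $(\tau, v\ast \tau)$ is a candidate matched pair in $v\ast C$, and we want to show these pairs can be ordered so that each step is a genuine elementary collapse (i.e. the smaller face is free at the moment it is removed).

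First I would fix an ordering $\tau_1, \tau_2, \dots$ of the faces of $C$ that are not in $C'$, \emph{sorted by decreasing dimension} (and arbitrarily within each dimension); if $C$ is infinite one works instead with an exhaustion, but since the lemma as used is about finite complexes I will treat the finite case and remark that the infinite case is analogous via Theorem~\ref{lem:equivalenceinf}. I then perform the collapses in the order $(\tau_1, v\ast\tau_1), (\tau_2, v\ast\tau_2), \dots$. The central claim is: at the stage when we are about to remove $\tau_i$ and $v\ast\tau_i$, the face $\tau_i$ is free, i.e.\ it is contained in exactly one remaining face, namely $v\ast\tau_i$. To see this, note the faces of $v\ast C$ containing $\tau_i$ are precisely $\tau_i$, its cofaces $\tau_i \subsetneq \rho$ in $C$ together with all $v\ast\rho$ for $\rho\supseteq\tau_i$ in $C$. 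Any coface $\rho\supsetneq\tau_i$ in $C$ has strictly larger dimension, hence (being not in $C'$, because $C'$ is a subcomplex and $\tau_i\notin C'$ forces $\rho\notin C'$) it equals some $\tau_j$ with $j<i$ in our ordering, so it and $v\ast\rho=v\ast\tau_j$ have already been deleted. Likewise $v\ast\rho$ for $\rho\supsetneq\tau_i$ has been deleted as part of step $j$. And $v\ast\tau_i$ itself is the unique remaining coface. Hence $\tau_i$ is free and $(v\ast C)^{(i-1)} \searrow_e (v\ast C)^{(i)}$ where the superscript denotes the complex after $i$ steps; a symmetric bookkeeping check confirms no face is deleted twice and that the limit of this process is exactly $v\ast C'$, since a face of $v\ast C$ survives iff it is in $C'$ or is $v\ast\sigma$ for $\sigma\in C'$ or is $\{v\}$ — that is, iff it lies in $v\ast C'$.

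The main obstacle, and the only place requiring care, is verifying the freeness claim with the correct ordering — specifically checking that sorting by \emph{decreasing} dimension guarantees that every proper coface of $\tau_i$ in $C$ (which automatically lies outside $C'$) has already been processed, and that processing it removed both $\tau_j$ and $v\ast\tau_j$ but touched no face still needed. This is a finite, purely combinatorial verification once the ordering is pinned down; the geometry of the simplicial complex plays no role, consistent with the remark preceding the lemma that collapsibility depends only on the combinatorial type. For completeness I would also note that this collapsing sequence is finite when $C$ is finite, and that the argument packages neatly as a discrete Morse function on $v\ast C$ whose matching consists of the pairs $(\tau, v\ast\tau)$ for $\tau\notin C'$ together with whatever matching exhibits the (trivial) fact that $v\ast C'$ retracts onto itself, with critical cells exactly the faces of $v\ast C'$; this viewpoint is the one that makes the extension to the locally finite setting via Theorem~\ref{lem:equivalenceinf} immediate should it be needed.
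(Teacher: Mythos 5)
Your proof is correct. The paper itself gives no argument for this lemma, stating it only as a recalled result from the reference \cite{ABpart1}, so there is no in-paper proof to compare against; but the argument you give is the standard one, and it is the same in spirit as the proof in that source. Sorting the faces of $C$ outside $C'$ by weakly decreasing dimension and pairing $\tau_i$ with $v\ast\tau_i$ is exactly what makes each $\tau_i$ free at the moment of its removal: the crucial point, which you state and justify, is that $C'$ being a subcomplex forces every strict coface $\rho\supsetneq\tau_i$ in $C$ to also lie outside $C'$, hence to appear earlier in the list and to have been stripped away (together with $v\ast\rho$) already. The bookkeeping that the intermediate objects stay simplicial complexes and that the terminal complex is precisely $v\ast C'$ is routine and you indicate it. Your observation that the whole argument is purely combinatorial, and that it packages as a discrete Morse matching with critical cells exactly the faces of $v\ast C'$, is accurate and harmonizes with how the lemma is later invoked via Lemma~\ref{lem:cecoll} and Theorem~\ref{lem:equivalenceinf}. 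The only loose end is the remark about the infinite case: in the paper's applications (to links $\Lk(v_i,\varSigma_i)$) the cone base is always finite, so restricting to finite $C$ is entirely adequate, and the hand-wave toward exhaustions is harmless but unnecessary.
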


\begin{lemma}\label{lem:cecoll}
Let $v$ be any vertex of any simplicial complex $C$. If $\Lk(v,C)$ collapses to some subcomplex $S$, then $C$ collapses to 
$(C-v)\cup (v\ast S).$ In particular, if $\Lk(v,C)$ is collapsible, then  $C\searrow C-v$.  
\end{lemma}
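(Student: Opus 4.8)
The plan is to lift a collapsing sequence of $\Lk(v,C)$ to one of $C$, using repeatedly the elementary observation that in any simplicial complex every coface of a face containing $v$ again contains $v$, and hence lies in $\St(v,C)=v\ast\Lk(v,C)$. Consequently, if a face $\eta\ni v$ is a free face of $\St(v,C)$, then it is a free face of $C$ as well: adjoining to $\St(v,C)$ the faces of $C$ that avoid $v$ creates no new coface of $\eta$. More generally, any sequence of elementary collapses of $\St(v,C)$ that deletes only faces containing $v$ may be performed verbatim inside $C$, deleting exactly the same faces. Since $\Lk(v,C)\subseteq C-v$ and $v\ast S\subseteq\St(v,C)$, it therefore suffices to establish the local statement
\[ v\ast\Lk(v,C)\ \searrow\ (v\ast S)\cup\Lk(v,C) \]
through elementary collapses that delete only faces containing $v$; running the very same collapses starting from $C$ then yields $C\searrow(C-v)\cup\big((v\ast S)\cup\Lk(v,C)\big)=(C-v)\cup(v\ast S)$, which is the assertion.

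To prove the local statement, write $L:=\Lk(v,C)$ and fix a collapsing sequence $L=L_0\searrow_e L_1\searrow_e\cdots\searrow_e L_k=S$ in which the $i$-th step deletes a free face $\sigma_i$ of $L_i$ together with its unique proper coface $\tau_i$ (necessarily $\dim\tau_i=\dim\sigma_i+1$). I claim that $v\ast L$ collapses to $(v\ast S)\cup L$ by deleting, in order, the pairs $\{v\ast\sigma_0,v\ast\tau_0\},\dots,\{v\ast\sigma_{k-1},v\ast\tau_{k-1}\}$. The heart of the argument is the inductive assertion that after the first $i$ of these deletions the current complex equals $(v\ast L_i)\cup L$. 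Granting it, the proper cofaces of $v\ast\sigma_i$ in that complex all contain $v$, hence have the form $v\ast\rho$ with $\rho\in L_i$ and $\rho\supsetneq\sigma_i$; since $\sigma_i$ is free in $L_i$ the only possibility is $\rho=\tau_i$, so $v\ast\sigma_i$ is a free face with unique proper coface $v\ast\tau_i$ and the corresponding elementary collapse is legitimate, carrying $(v\ast L_i)\cup L$ to $(v\ast L_{i+1})\cup L$ (a routine set-theoretic identity, using that neither $v\ast\sigma_i$ nor $v\ast\tau_i$ lies in $L$). After $k$ steps one reaches $(v\ast L_k)\cup L=(v\ast S)\cup L$, and every deleted face contains $v$, so by the first paragraph $C\searrow(C-v)\cup(v\ast S)$.

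For the last sentence of the lemma, suppose $\Lk(v,C)$ is collapsible and take $S=\{w\}$ for a vertex $w$ of $\Lk(v,C)$; then $v\ast S$ is the edge $vw$ with its endpoints, and since $w\in\Lk(v,C)\subseteq C-v$ the statement already proved gives that $C$ collapses to $C-v$ with the extra vertex $v$ and edge $vw$ attached. In this complex $v$ is strictly contained only in $vw$, hence is a free face, and one further elementary collapse removes $v$ and $vw$, leaving $C-v$. I do not expect a real obstacle here: the only delicate point is the inductive bookkeeping in the second paragraph — that the residue after $i$ deletions is exactly $(v\ast L_i)\cup L$, and that $v\ast\sigma_i$ then has no coface besides $v\ast\tau_i$ — and this is precisely where the observation that cofaces of a face through $v$ stay within $\St(v,C)$ is doing all the work.
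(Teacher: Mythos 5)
Your proof is correct and is essentially the standard argument used in the cited source \cite{ABpart1} (the paper itself states this lemma without proof, deferring to that reference): one lifts the collapse of the link to the star by coning with $v$, and the key observation that every coface of a face containing $v$ again contains $v$ — so that freeness of such a face in the residual star persists in the residual complex — is exactly what makes the lift work. Your inductive identification of the residue after $i$ steps with $(v\ast L_i)\cup L$, and the final elementary collapse of the free pair $(v,vw)$, are both handled correctly.
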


\begin{lemma}\label{lem:uc}
Let $C$ denote a simplicial complex that collapses to a subcomplex $C'$. Let $D$ be a simplicial complex such that $D\cup C$ is a simplicial complex. If $D\cap C=C'$, then $D\cup C\searrow D$.
\end{lemma}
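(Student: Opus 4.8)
\textbf{Proof plan for Lemma~\ref{lem:uc}.}

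The plan is to reduce the global statement to a purely local one about links, and then run an induction on the number of elementary collapses in the sequence $C\searrow C'$. First I would observe that it suffices to prove the single-collapse case: if $C\searrow_e C-\sigma$ for a free face $\sigma$ (with unique coface $\tau$), and $D\cap C=C'\subseteq C-\sigma$, then it is enough to show that $D\cup C\searrow_e D\cup(C-\sigma)$, because then $D\cap(C-\sigma)$ is still equal to $C'$ and we may iterate down the collapsing sequence, each intermediate complex $D\cup C_i$ remaining a genuine simplicial complex since $C_i\subseteq C$. So the heart of the matter is: $\sigma$ is a free face of $C$ witnessed by $\tau$, and $\sigma$ does not lie in $D$ (since $\sigma\notin C'=D\cap C$, as $\sigma\notin C-\sigma$); I want to conclude $\sigma$ is still free in $D\cup C$, with the same unique coface $\tau$.

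The key step is therefore to check that no face of $D$ strictly contains $\sigma$. Suppose $\rho\in D$ with $\sigma\subsetneq\rho$. Since $D\cup C$ is a simplicial complex, $\rho$ is a face of it; but I must locate $\rho$. The cleanest route is to use the hypothesis that $D\cup C$ is a simplicial complex together with $D\cap C=C'$: every face of $D$ that meets $C$ in more than... actually the relevant point is that any face $\rho\supsetneq\sigma$ lying in $D$ would have all its subfaces in $D$, in particular $\sigma\in D$, contradicting $\sigma\notin D$. Wait — that already finishes it: $D$ is a simplicial complex, so if $\rho\in D$ then every face of $\rho$ lies in $D$, hence $\sigma\in D$, hence $\sigma\in D\cap C=C'$, hence $\sigma\in C'\subseteq C-\sigma$, which is absurd. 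So no face of $D$ strictly contains $\sigma$, and the cofaces of $\sigma$ in $D\cup C$ are exactly the cofaces of $\sigma$ in $C$, namely $\{\sigma,\tau\}$. Thus $\sigma$ is free in $D\cup C$ with coface $\tau$, the elementary collapse $D\cup C\searrow_e (D\cup C)-\sigma$ is valid, and $(D\cup C)-\sigma=D\cup(C-\sigma)$ because deleting $\sigma$ removes exactly the faces $\{\sigma,\tau\}$, neither of which is in $D$.

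Iterating along the sequence $C=C_0\searrow_e C_1\searrow_e\cdots\searrow_e C_n=C'$, and using that at each stage the new deleted free face lies outside $D$ by the same argument (it is not in $C_{i+1}\supseteq C'=D\cap C_i$, so it is not in $D\cap C_i$, so it is not in $D$), we obtain $D\cup C\searrow D\cup C'$. Finally $D\cup C'=D$ since $C'=D\cap C\subseteq D$. I do not expect a real obstacle here; the only point demanding slight care is the bookkeeping that $D\cup C_i$ stays a simplicial complex throughout — this holds because $D\cup C_i\subseteq D\cup C$ is a subcomplex of a simplicial complex — and that the intersection hypothesis $D\cap C_i=C'$ is preserved, which follows from $C'\subseteq C_i\subseteq C$ squeezing $D\cap C_i$ between $D\cap C=C'$ and $C'$.
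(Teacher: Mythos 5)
Your proof is correct. The paper itself states Lemma~\ref{lem:uc} without proof, citing it from~\cite{ABpart1}, but your argument is exactly the standard one and matches what that reference does: reduce to a single elementary collapse, observe that the free face $\sigma$ (and its unique coface $\tau$) cannot lie in $D$ because $\sigma\notin C'$ while $D\cap C=C'$, conclude that $\sigma$ remains free in $D\cup C$ with the same coface, and then iterate down the collapsing sequence while tracking that $D\cap C_i=C'$ persists. Your bookkeeping of the invariant and of why $(D\cup C)-\sigma=D\cup(C-\sigma)$ is all in order.
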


\subsection{Gradient matchings and star-minimal functions following \cite{ABpart1,ABpart2}}\label{subsec:1}
In this section we show how to obtain Morse matchings on a simplicial complex, using real-valued continuous functions on the complex.

\begin{definition}
Let $C$ be an intrinsic simplicial complex. A function $f : |C| \rightarrow \mathbb{R}$ is called \emph{star-minimal} if it satisfies the following three conditions:
\begin{compactenum}[(i)]
\item $f$ is continuous,
\item on the star of each face of $C$, the function $f$ has a unique absolute minimum, and
\item no two vertices have the same value under $f$.
\end{compactenum}
\end{definition}

Note that a generic continuous function on a simplicial complex is star-minimal.

A star-minimal function on a complex $C$ induces a Morse matching, called \emph{gradient matching}, as follows. 

\begin{figure}[ht]
	\centering
  \includegraphics[width=.35\linewidth]{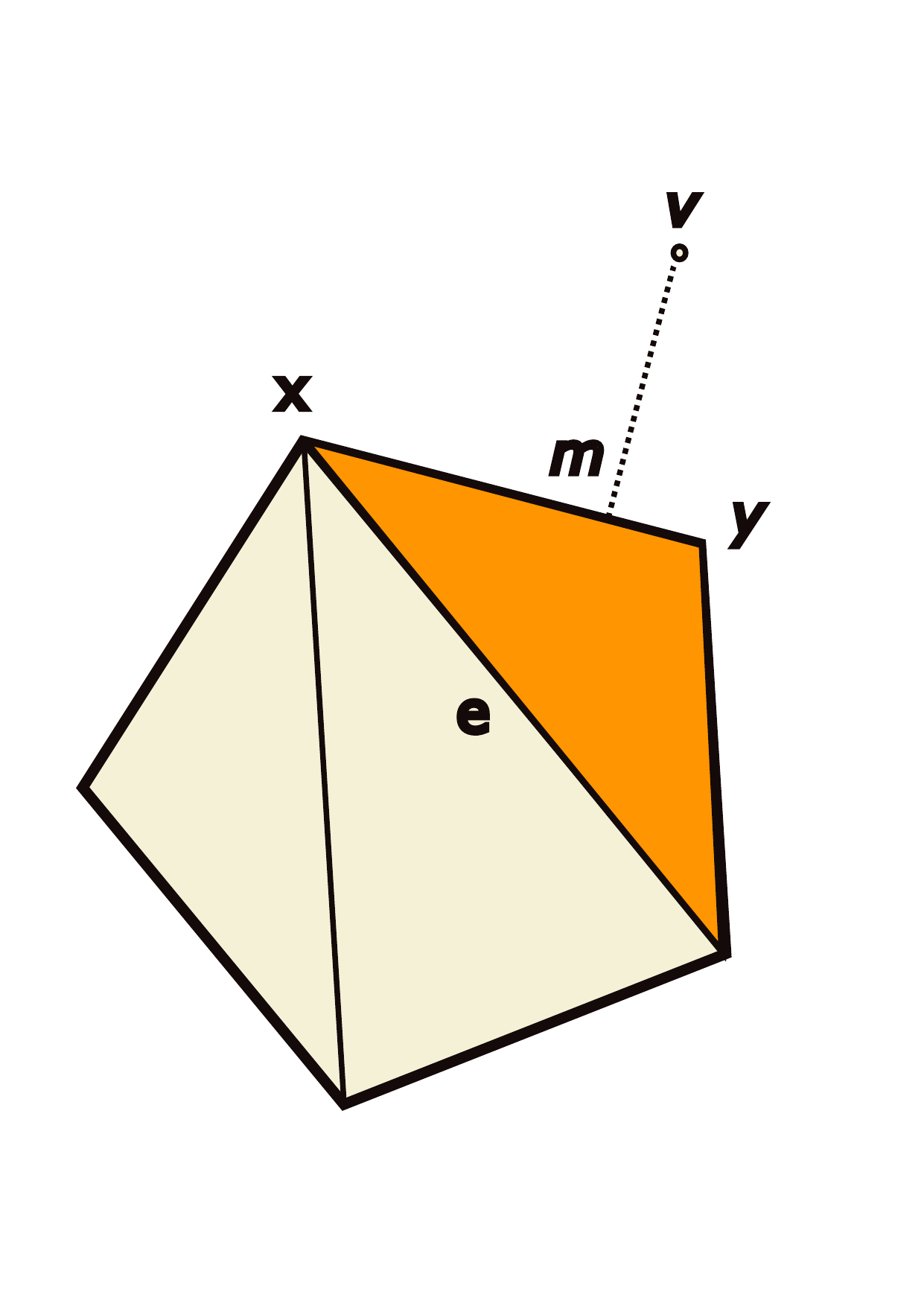}
 	\caption{\footnotesize Matching an edge $e$ with respect to the distance $f$ from some point $v$, with unique minimum $m$ on $|\St (e,C)|$. Then $\Theta_f(e)=e$ or 
 	$\Theta_f(e)=e\ast y$, according to whether $x$ is closer to $v$ than $y$ or not}
	\label{fig:matching}
\end{figure}

Let $\sigma$ be a face of $C$. On the star of $\sigma$, the function $f$ has a unique minimum $m$. We denote by $\mu (\sigma)$ the inclusion-minimal face among all the faces of $\St(\sigma,C)$ that contain the point $m$. Consider the  {\em pointer map} $y_f: C \rightarrow C$,  given by:  
\[y_f (\sigma) \:= \ y, \]
where $y$ is the unique vertex of $\mu (\sigma)$ on which $f$ attains its minimum value.

The matching $\Theta_f : C \longrightarrow C$ associated with the function $f$ is defined recursively on the dimension. Set first $\Theta(\emptyset) := \emptyset$. Let $\sigma$ be a face of $C$.  If for all faces $\tau \subsetneq \sigma$ one has $\Theta_f (\tau) \ne \sigma$, we define 
\[ \Theta_f(\sigma) \ := \ y_f (\sigma) \ast \sigma.\]
Note that every face of $C$ is either in the image of $\Theta_f$, or in its domain, or both. In the latter case, we have $\Theta_f (\sigma) = \sigma$. 

\begin{definition}[Gradient matching]
Let $\Theta : C \longrightarrow C$ be (the partial function associated to) a matching on the faces of a simplicial complex $C$. We say that $\Theta$ is a \emph{gradient matching} if 
$\Theta 
 =  
\Theta_f$
for some star-minimal function $f : |C| \rightarrow \mathbb{R}$.
\end{definition}

Next we recall the following result from \cite{ABpart1}, showing that all gradient matchings are  Morse matchings.
  
\begin{proposition}\label{thm:formula}
Let $C$ be a simplicial complex. Let $f:|C|\rightarrow \mathbb{R}$ be any star-minimal continuous function on the underlying space of~$C$. Then the induced gradient matching $\Theta_f$ is a Morse matching. Moreover, the map 
\[\sigma \mapsto (y_f(\sigma),\sigma)\]
yields a bijection between the set $\mathfrak{C}$ of the (nonempty) critical faces and the set \[\mathfrak{P} := \left \{
(v,\tau) \; | \; v \subset \tau, \,\ y_f(\tau)= v \textrm{ and } y_f(\tau- v)\ne v \right \}.\] 
In particular, the complex $C$ admits a discrete Morse function with $c_i$ critical $i$-simplices, where  
\[ c_i = \# \, \left\{
(v,\tau) \; | \; 
v \subset \tau, \, \dim \tau =i, \, y_f(\tau)= v  \textrm{ and } 
y_f(\tau- v )\ne v \right\}.
\]
\end{proposition}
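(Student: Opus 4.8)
The statement to prove is Proposition~\ref{thm:formula}: for a star-minimal $f$, the gradient matching $\Theta_f$ is a Morse matching, and $\sigma\mapsto(y_f(\sigma),\sigma)$ gives a bijection between critical faces and the pair set $\mathfrak{P}$. The plan is to separate the argument into three pieces: (1) $\Theta_f$ is a well-defined matching (an involution-like pairing on non-critical faces, each pair of faces differing in dimension by one); (2) $\Theta_f$ contains no closed gradient path, hence is a Morse matching; (3) the critical faces are exactly indexed by $\mathfrak{P}$. Steps (1) and (3) are essentially bookkeeping from the recursive definition of $\Theta_f$, so the real work is step (2).

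For step (1), I would first record the basic properties of the pointer map $y_f$. The key structural fact is \emph{locality/consistency}: if $y_f(\sigma)=v$ then the unique minimum $m$ of $f$ on $\St(\sigma,C)$ lies in a face $\mu(\sigma)$ which itself is contained in $\St(\sigma\ast v,C)$, and moreover $m$ is also the minimum of $f$ on $\St(\sigma\ast v, C)$ whenever $v\not\subset\sigma$; this is because $\St(\sigma\ast v,C)$ is a subcomplex of $\St(\sigma,C)$ still containing $m$. From this one deduces $y_f(\sigma\ast v)=v$ as well, so that $v\ast\sigma$ is never in the domain of $\Theta_f$ at a face other than $\sigma$ — which is precisely what makes the recursion self-consistent and $\Theta_f$ a genuine partial matching: a face $\tau$ in the image of $\Theta_f$ satisfies $\tau=y_f(\tau-v)\ast(\tau-v)$ for a unique vertex $v$, namely $v=y_f(\tau)$ when $y_f(\tau-v)\neq v$ fails — here one must carefully track the clause ``if for all $\tau\subsetneq\sigma$ one has $\Theta_f(\tau)\neq\sigma$''. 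I would verify that each face is matched at most once by checking that the only candidate partner in lower dimension is $\sigma\setminus y_f(\sigma)$ and the only candidate in higher dimension is $y_f(\sigma)\ast\sigma$, and that these choices are mutually exclusive exactly when $\sigma$ is non-critical.

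For step (2), the absence of closed gradient paths, the idea is to exhibit a quantity that strictly decreases (or is controlled) along any gradient path, using the continuous function $f$. A gradient path alternates $(\sigma_i,\Sigma_i),(\sigma_{i+1},\Sigma_{i+1})$ with $\Sigma_i=y_f(\sigma_i)\ast\sigma_i$ and $\sigma_{i+1}\subsetneq\Sigma_i$, $\sigma_{i+1}\neq\sigma_i$. The natural monovariant is the minimum value $f_{\min}(\sigma):=\min_{|\St(\sigma,C)|}f$, or more precisely the position of the point $m$ realizing it. Since $\St(\sigma_{i+1},C)\supseteq\St(\Sigma_i,C)\ni m$ where $m$ realizes $f_{\min}(\sigma_i)$ (as $\Sigma_i\supset\sigma_i$ forces $\St(\Sigma_i,C)\subseteq\St(\sigma_i,C)$, and $m\in\mu(\sigma_i)\subseteq\St(\Sigma_i,C)$), we get $f_{\min}(\sigma_{i+1})\le f_{\min}(\sigma_i)$; and the step from $\sigma_{i+1}$ back up to $\Sigma_{i+1}=y_f(\sigma_{i+1})\ast\sigma_{i+1}$ keeps $f_{\min}$ weakly monotone. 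The point is to upgrade ``weakly'' to ``strictly'' at the moment the path would close: if $f_{\min}$ were constant along a closed path, then all the faces $\sigma_i$ share the same minimizing point $m$ of $f$, and $\mu(\sigma_i)$ is the same fixed face $\mu$ for all $i$; but then $y_f(\sigma_i)$ is the same vertex $y$ of $\mu$ for all $i$, forcing $\Sigma_i=y\ast\sigma_i$, and $\sigma_{i+1}\subsetneq y\ast\sigma_i$ with $\sigma_{i+1}\neq\sigma_i$ means either $\sigma_{i+1}=\sigma_i\ast y-w$ for some vertex $w\in\sigma_i$, or a proper face thereof; tracking the combinatorial type (e.g.\ whether $y\in\sigma_i$) one shows the sequence of faces $\sigma_i$ is strictly ``moving toward $\mu$'' in a way that cannot cycle — for instance the symmetric difference with $\mu$, or the distance in the Hasse diagram between $\sigma_i$ and $\mu$, strictly decreases each time, which is impossible around a loop. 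The main obstacle is precisely making this last combinatorial monovariant rigorous: showing that on a level set of $f_{\min}$ the gradient flow is acyclic because it strictly simplifies the face relative to the fixed target $\mu$. I would handle it by an explicit case analysis on whether $y_f(\sigma_i)\subset\sigma_i$, mirroring the two cases in the definition of $\Theta_f$ ($\Theta_f(\sigma)=\sigma$ versus $\Theta_f(\sigma)=y_f(\sigma)\ast\sigma$), and by the observation in the figure caption that the matched partner is obtained by either adjoining $y$ or doing nothing.

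For step (3), I would show that $\sigma$ is critical iff $\sigma$ is neither in the domain nor the image of the matching, i.e.\ iff $\Theta_f(\sigma)=y_f(\sigma)\ast\sigma$ fails to be defined as a strict enlargement \emph{and} $\sigma$ is not of the form $y_f(\tau)\ast\tau$. Unwinding: $\sigma$ is in the image means $\sigma=y_f(\sigma-v)\ast(\sigma-v)$ with $v=y_f(\sigma)$ and $y_f(\sigma-v)=v$, while $\sigma$ being unmatched ``from below'' (not forced to equal $\Theta_f(\tau)$ for $\tau\subsetneq\sigma$) combined with $\Theta_f(\sigma)=\sigma$ (i.e.\ $y_f(\sigma)\ast\sigma=\sigma$, meaning $y_f(\sigma)\subset\sigma$) pins down that $\sigma$ critical $\iff$ $y_f(\sigma)=:v\subset\sigma$ and $y_f(\sigma-v)\neq v$. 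This is exactly the defining condition of $\mathfrak{P}$ with $\tau=\sigma$, and the map $\sigma\mapsto(y_f(\sigma),\sigma)$ is manifestly injective, with surjectivity onto $\mathfrak{P}$ immediate from the same equivalence. The dimension count for $c_i$ then follows by restricting to $\dim\tau=i$. Throughout, I would lean on Lemma~\ref{lem:unique} only implicitly via the hypothesis that $f$ has a \emph{unique} minimum on each star, which is what guarantees $\mu(\sigma)$ and $y_f(\sigma)$ are well-defined single objects; no $\CAT$ geometry is needed for this proposition, only the star-minimality axioms.
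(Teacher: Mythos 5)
The paper does not prove Proposition~\ref{thm:formula}; it is recalled from \cite{ABpart1}, so there is no in-paper argument to compare against. Your decomposition into (well-definedness of the matching, acyclicity, classification of critical cells) and your identification of the locality lemma (if $y_f(\sigma)=v\not\subset\sigma$ then $m$ lies in $|\St(v\ast\sigma,C)|$ and $y_f(v\ast\sigma)=v$) is exactly the right skeleton, and steps (1) and (3) would go through as you sketch them once that lemma is written down carefully.

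The gap is in step (2), and it is precisely at the place you flag as ``the main obstacle.'' The combinatorial monovariants you suggest for the constant-$f_{\min}$ level set do not in fact strictly decrease: with $\sigma_{i+1}=(y\ast\sigma_i)-w$ for some $w\in\sigma_i$, the symmetric difference $|\sigma_{i+1}\triangle\mu|$ equals $|\sigma_i\triangle\mu|$ whenever $w\in\mu$, and $|\mu\setminus\sigma_{i+1}|$ likewise can stay constant. So as proposed, the ``strictly moving toward $\mu$'' argument does not close. But you do not need any such secondary monovariant, and the ingredients you have already laid out finish the job immediately. If $f_{\min}(\sigma_{i+1})=f_{\min}(\sigma_i)$, then (by uniqueness of the star-minimum and the inclusion of stars you observed) $m$ is also the minimizer on $\St(\sigma_{i+1},C)$, hence $\mu(\sigma_{i+1})=\mu(\sigma_i)=\mu$ and $y_f(\sigma_{i+1})=y_f(\sigma_i)=:v$. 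On the other hand, $\sigma_{i+1}$ is a facet of $\Sigma_i=v\ast\sigma_i$ distinct from $\sigma_i=\Sigma_i-v$, so $\sigma_{i+1}=\Sigma_i-w$ for some $w\in\sigma_i$ and therefore $v\in\sigma_{i+1}$. Thus $y_f(\sigma_{i+1})\subset\sigma_{i+1}$, so $\Theta_f(\sigma_{i+1})$, if defined, equals $\sigma_{i+1}$; either way $\sigma_{i+1}$ cannot be the bottom element of a matched pair $(\sigma_{i+1},\Sigma_{i+1})$, contradicting the fact that the gradient path continues at step $i+1$. Consequently $f_{\min}$ is \emph{strictly} decreasing along every gradient path of length at least one, and no closed path exists. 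In short: you have all the pieces, but the right closing move is ``the level set contains no edge of the gradient path at all,'' not ``there is a secondary monovariant on the level set.'' Replacing the speculative monovariant paragraph with the observation above makes the plan complete.
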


\subsection{Arborescence and subdivisions of convex polytopes}

We will need later the following result from \cite{ABpart2}:

\begin{proposition}\label{thm:hudson}
Let $C, C'$ be polytopal complexes such that $C' \subset C$ and $C\searrow C'$. Let $D$ denote any subdivision of $C$, and define $D':=\RS(D,C')$. Then, $\sd D   \searrow   \sd D'$.
\end{proposition}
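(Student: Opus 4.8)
The plan is to follow the classical route to the subdivision–collapse lemma of Hudson: reduce everything to one elementary collapse across a single polytope, and then induct on dimension using the link and gluing lemmas recalled above.

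\emph{Step 1: reduction to a single elementary collapse.} Write the given collapse as a chain $C=C_0\searrow_e C_1\searrow_e\cdots\searrow_e C_m=C'$ and set $D_j:=\RS(D,C_j)$. Since $C_j\subseteq C_{j-1}$ one has $\RS(D_{j-1},C_j)=D_j$, so $\sd D=\sd D_0\searrow\sd D_1\searrow\cdots\searrow\sd D_m=\sd D'$ will follow once I prove $\sd D_{j-1}\searrow\sd D_j$ for each single step. Hence I may assume $C\searrow_e C-\sigma$, where $\sigma$ is the free face and $\tau\supsetneq\sigma$ its unique coface; in particular $\sigma$ is a facet of the polytope $\tau$ and $C-\sigma=\RS(C,|C|\setminus\rint\sigma)$.

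\emph{Step 2: localization to a single polytope.} Because $\sigma$ is free, $\St(\sigma,C)$ is the whole closed polytope $\tau$, and $(C-\sigma)\cap\tau=\partial\tau-\sigma$. Put $D_1:=\RS(D,C-\sigma)$ and $E:=\RS(D,\tau)$; classifying each face of $D$ by the smallest face of $C$ containing it gives $D=D_1\cup E$ and $D_1\cap E=\RS(D,\partial\tau-\sigma)=:E'$. A derived subdivision is built cell by cell, so it commutes with unions and intersections of subcomplexes; thus $\sd D=\sd D_1\cup\sd E$ and $\sd D_1\cap\sd E=\sd E'$, and by the gluing Lemma~\ref{lem:uc} it suffices to prove $\sd E\searrow\sd E'$. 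In other words, the whole statement is reduced to the following: \emph{if $D$ is a subdivision of a polytope $\tau$ and $\sigma$ is a facet of $\tau$, then $\sd D\searrow\sd\RS(D,\partial\tau-\sigma)$.}

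\emph{Step 3: the single–polytope case.} I would prove this by induction on $d=\dim\tau$. For $d=1$ it is immediate: $\sd D$ is a subdivided segment and $\partial\tau-\sigma$ is one of its two endpoints, onto which a path collapses. For $d\ge2$ one proceeds through the faces of $\tau$: for every facet $\rho\ne\sigma$ the restriction $\RS(D,\rho)$ is a subdivision of the polytope $\rho$, to which the inductive hypothesis applies and describes how $\sd\RS(D,\rho)$ collapses onto $\sd\RS(D,\partial\rho-(\rho\cap\sigma))$; assembling these collapses on the various facets of $\tau$ with Lemma~\ref{lem:uc}, and absorbing the remaining interior by collapsing the links of the interior barycenters of $\sd D$ with Lemma~\ref{lem:cecoll} (passing, if necessary, to a further derived subdivision so that those links are themselves derived subdivisions of subdivisions of lower–dimensional polytopes), one strips off from $\sd D$ precisely the faces carried by $\rint\sigma\cup\rint\tau$ and is left with $\sd\RS(D,\partial\tau-\sigma)$.

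The main obstacle is Step 3, and within it the coherence of the assembly: one must order the collapses so that at every intermediate stage the complex still splits as a union along a subcomplex in the way Lemma~\ref{lem:uc} requires, and so that the running target remains exactly the restriction to $\partial\tau-\sigma$ (the facets of $\tau$ other than $\sigma$ overlap along lower faces, so this is not automatic). It is also precisely here that the hypothesis of a \emph{derived} subdivision, rather than an arbitrary one, is indispensable: a subdivision $D$ of a polytope need not be collapsible, so the conclusion genuinely concerns $\sd D$ and not $D$ itself.
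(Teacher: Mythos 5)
Your Steps 1 and 2 are the standard and correct reduction: a single elementary collapse removes a free facet $\sigma$ of a polytope $\tau$, and by the gluing Lemma~\ref{lem:uc} everything comes down to showing that, for a subdivision $D$ of a polytope $\tau$ with facet $\sigma$, one has $\sd D\searrow\sd\RS(D,\partial\tau-\sigma)$.

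Step 3 does not work as written, for three reasons. (i) You apply the inductive hypothesis to each facet $\rho\ne\sigma$ with the pair $(\rho,\rho\cap\sigma)$, but $\rho\cap\sigma$ need not be a facet of $\rho$: two facets of a polytope can meet in a face of any codimension, or be disjoint (opposite faces of a cube), so the inductive statement is simply not applicable in the form you need. (ii) ``Passing to a further derived subdivision'' is not available: the claim is about $\sd D$, and replacing it by $\sd^2 D$ would prove a strictly weaker statement that would not chain with your Step~1 reduction (the next elementary collapse must again be applied to $\sd D$, not $\sd^2 D$). (iii) Even granting (i), Lemma~\ref{lem:uc} cannot glue the facet collapses into a collapse of $\sd D$: the rest of $\sd D$ meets $\sd\RS(D,\rho)$ in all of $\sd\RS(D,\partial\rho)$, not in the smaller complex $\sd\RS(D,\partial\rho-(\rho\cap\sigma))$ produced by your induction, so the hypothesis $D\cap C=C'$ of that lemma fails at every gluing step.

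The proof cited from \cite{ABpart2} — and the closely parallel argument given for Theorem~\ref{thmconvexcollapse} in this paper — avoids a facet-by-facet decomposition entirely. One fixes a generic point $p$, considers the distance function from $p$, and orders \emph{all} barycenters of $\sd D$ at once by the associated derived order (Definition~\ref{def:extord}), deleting them one at a time; at each deletion the link of the removed vertex is either a cone (hence collapsible, Lemma~\ref{lem:ccoll}) or is, by construction, a derived neighborhood inside a link of strictly lower dimension, to which the dimensional induction applies, and Lemma~\ref{lem:cecoll} certifies each deletion as a collapse. It is this single global order, interleaving boundary and interior barycenters rather than processing them in separate phases, that resolves the assembly incoherence you flagged but did not overcome.
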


 The following lemma from \cite{ABpart1} seems well-known
we will include a proof for the sake of completeness:

\begin{lemma}\label{lem:BruMani} Let $\mu$ be any face of a $d$-dimensional polytope $P$. 
For any polytope $P$ and for any face $\mu$,  $P$ collapses onto 
$\St(\mu, \partial P)$, which is collapsible.
\end{lemma}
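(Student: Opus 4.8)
\textbf{Proof plan for Lemma~\ref{lem:BruMani}.} The plan is to produce an explicit discrete Morse function on $P$ (viewed as a polytopal complex consisting of all faces of $P$) with exactly one critical face, and then invoke Theorem~\ref{lem:equivalence}; and to handle the two assertions of the lemma separately but with the same geometric device. First I would pick a generic linear functional $\ell$ on the ambient space that is minimized over $P$ at a unique vertex $w$ lying in the (relative) interior of $\mu$ if $\dim\mu\geq 1$, or equals $\mu$ itself if $\mu$ is a vertex; more precisely, choose $\ell$ so that its restriction to $P$ attains its minimum at a point $m$ in the relative interior of $\mu$, and so that $\ell$ is in general position with respect to the face lattice of $P$ (no two vertices share an $\ell$-value). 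This is exactly the star-minimal set-up of Subsection~\ref{subsec:1} applied to $f=\ell|_{|\partial P|}$ on the boundary sphere, or to a slight perturbation thereof.

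For the collapse $P\searrow \St(\mu,\partial P)$ I would argue as follows. The faces of $P$ split into those contained in $\partial P$ and the single face $P$ itself. Running the sweep of $\partial P$ by the functional $\ell$ from its minimum at $m$: the ``lower'' part, consisting of faces of $\partial P$ visible from below, is precisely $\St(\mu,\partial P)$ (these are the faces whose supporting hyperplane meets $P$ near $m$, i.e.\ the faces containing $\mu$ in their closure after passing to the link), while the faces not in $\St(\mu,\partial P)$ can be matched in pairs by the standard shelling/matching of the boundary complex of a polytope — concretely, for each such face $F$ one matches $F$ with $F\ast v$ or $F$ with $F-v$ according to whether the $\ell$-minimal vertex of $F$ is the $\ell$-minimal vertex of the facet of $P$ ``seen'' from $m$ through $F$; this is the matching of Proposition~\ref{thm:formula} and has no closed gradient paths. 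The unmatched faces on $\partial P$ are then exactly the faces of $\St(\mu,\partial P)$ together with one extra pair involving $P$; matching $P$ with its unique free facet (the facet on which $\ell$ is maximized, say) removes $P$ and shows $P\searrow \partial P - (\text{that facet}) \searrow \St(\mu,\partial P)$. Alternatively, and perhaps more cleanly: $\partial P$ minus one facet collapses to $\St(\mu,\partial P)$ because a polytope boundary is shellable and one may shell so that the facets containing $\mu$ come last; combine this with the elementary collapse of $P$ through a free facet not containing $\mu$.

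To see that $\St(\mu,\partial P)$ is itself collapsible, I would recall that $\St(\mu,\partial P)$ is a polytopal ball that, combinatorially, is a cone-like neighbourhood of $\mu$: it deformation retracts onto $\mu$, and more to the point $\Lk(\mu,\partial P)$ is the boundary complex of a polytope of dimension $d-\dim\mu-2$ (a vertex figure / iterated link), hence is shellable and in particular any facet is a free face after a suitable reordering, so $\St(\mu,\partial P)=\mu\ast\Lk(\mu,\partial P)$ collapses: by Lemma~\ref{lem:cecoll} applied repeatedly (or by Lemma~\ref{lem:ccoll}, since a join with a point of a shellable, hence collapsible-after-removing-a-facet, complex collapses to a point). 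In the simplicial or cubical case this is immediate from Lemma~\ref{lem:ccoll}; in general one first passes to a derived subdivision, uses Proposition~\ref{thm:hudson} to transport the collapse, and notes that collapsibility of the subdivided star follows from shellability of the polytope $\partial P$ restricted to $\St(\mu,\partial P)$.

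The main obstacle I anticipate is the bookkeeping that identifies the unmatched faces of the boundary sweep precisely with $\St(\mu,\partial P)$: one must verify that a face $F\subset\partial P$ is left uncovered by the gradient matching $\Theta_\ell$ exactly when $\mu\subseteq F$ (equivalently when $F$ lies in the portion of $\partial P$ facing the minimizing point $m$), which is where genericity of $\ell$ and the description of $\mu(\sigma)$ and the pointer map from Subsection~\ref{subsec:1} must be used carefully; the rest is standard polytope shellability and the cited collapsing lemmas. A secondary, purely technical point is making the argument uniform for arbitrary (not necessarily simplicial) polytopes, which is handled by the derived-subdivision trick together with Proposition~\ref{thm:hudson}.
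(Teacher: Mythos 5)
Your ``alternative, perhaps more cleanly'' route is essentially the paper's proof: the paper invokes Ziegler's \cite[Cor.~8.13]{Z} to get a shelling of $\partial P$ whose \emph{initial} segment is $\St(\mu,\partial P)$, removes the last facet $\tau$ as a free face of $P$ (so $P\searrow \partial P-\tau$), and then collapses $\partial P-\tau$ down to $\St(\mu,\partial P)$ by running the shelling in reverse; collapsibility of $\St(\mu,\partial P)$ is then free, because it is an initial segment of a shelling of a polytope boundary. (Your version puts the facets containing $\mu$ \emph{last} rather than first, which also works since the reverse of a shelling of a polytope boundary is again a shelling — but the paper's choice avoids even having to mention that fact.)

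Your primary route, via a gradient matching $\Theta_\ell$, has genuine problems. First, a linear functional cannot attain its minimum over $P$ at an interior point of a positive-dimensional face $\mu$ without being constant on $\mu$, so the ``generic $\ell$ with minimum at $m\in\mathrm{relint}\,\mu$'' set-up is self-contradictory when $\dim\mu\ge 1$; you would need a strictly convex function (e.g.\ distance from a point beyond $\mu$), as elsewhere in the paper. Second, and more seriously, the claimed identification of the unmatched faces of $\Theta_\ell$ on $\partial P$ with exactly $\St(\mu,\partial P)$ is not correct: with a star-minimal function whose unique minimum lies at a vertex of $\mu$, the argument of Theorem~\ref{thm:dischadamard} (or a direct computation with $y_f(\tau)$ and $y_f(\tau-v)$) tends to leave only the minimizing vertex unmatched, not all of $\St(\mu,\partial P)$. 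You flag this bookkeeping as ``the main obstacle'' — it is, and it is not resolved in the proposal. Third, the collapsibility claim for $\St(\mu,\partial P)$ via $\mu\ast\Lk(\mu,\partial P)$ uses the join, which the paper defines only for simplicial complexes; for a general polytope $P$ the star is \emph{not} a join, and passing to a derived subdivision via Proposition~\ref{thm:hudson} would prove collapsibility of $\sd\St(\mu,\partial P)$, not of $\St(\mu,\partial P)$ itself, which is what the lemma (and its later uses) require. The shelling argument sidesteps all of these issues at once.
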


\begin{proof}
There exists a shelling of $\partial P$ starting with $\St(\sigma, \partial P)$ according to \cite[Cor.~8.13]{Z}.  
If $\tau$ is the last facet of such a shelling, then $P$ 
collapses onto $\partial P - \tau$ and the later collapses onto $\St(\sigma, \partial P)$ by using the shelling. 
\end{proof}

\begin{proposition}\label{thm:liccon}
Let $C$ denote any subdivision of a convex $d$-polytope. Then $\sd C$ is collapsible.
\end{proposition}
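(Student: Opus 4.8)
The plan is to induct on the dimension $d$ of the convex polytope $P$ being subdivided, the base case $d=0$ (or $d=1$) being trivial since $\sd C$ is then a point or a path. So assume the result holds for all subdivisions of convex polytopes of dimension less than $d$, and let $C$ be a subdivision of a convex $d$-polytope $P$. The idea is to build a star-minimal function on $\sd C$ whose gradient matching (via Proposition~\ref{thm:formula}) has exactly one critical face, and then invoke Theorem~\ref{lem:equivalence}. The natural candidate is the distance function $f = d(\cdot, v)$ from a generic interior point $v$ of $P$ (generic enough that $f$ is star-minimal on $\sd C$); by convexity of $P$ this function is geodesically convex, so on $|\St(\sigma,\sd C)|$ it has a unique minimum for every face $\sigma$, and we may assume no two vertices of $\sd C$ share an $f$-value.

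The heart of the argument is to count the critical faces of $\Theta_f$ using the bijection with $\mathfrak{P} = \{(w,\tau) : w\subset\tau,\ y_f(\tau)=w,\ y_f(\tau-w)\neq w\}$ from Proposition~\ref{thm:formula}. I would argue that the only critical face is the vertex of $\sd C$ nearest to $v$ — call it $v_0$, the barycenter of $P$ if $v$ is chosen to be that barycenter, or more robustly, the vertex of $\sd C$ realizing $\min f$. Indeed $(v_0, v_0)$ is always critical. The claim is that no higher-dimensional critical pair $(w,\tau)$ exists. The mechanism: for a face $\tau$ of $\sd C$ not containing $v_0$ in a "pointing toward $v$" position, the pointer $y_f(\tau)$ is the vertex of $\mu(\tau)$ closest to $v$, and removing that vertex typically does not change which vertex the minimum-carrying face points to, because the minimum $m$ of $f$ on $\St(\tau,\sd C)$ lies in the "$v$-side" of $\tau$ and is governed by the link structure. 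Making this precise is where the convexity of $P$ and the derived (barycentric) structure of $\sd C$ must be used: the star of a face in a derived subdivision has a product-like / cone-like local structure, and the interior point $v$ sees a unique "closest direction."

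The main obstacle I anticipate is handling faces $\tau$ of $\sd C$ that touch $\partial P$: there the minimum of $f$ on $|\St(\tau,\sd C)|$ may be forced onto the boundary, and the clean "point toward $v$" picture degrades. The way around this is the boundary-to-interior reduction built from Lemma~\ref{lem:BruMani} and Proposition~\ref{thm:hudson}: a convex polytope $P$ collapses onto $\St(\mu,\partial P)$ for any face $\mu$, hence by Proposition~\ref{thm:hudson} the subdivision satisfies $\sd C \searrow \sd(\RS(C,\St(\mu,\partial P)))$ — wait, more carefully, with $C' := \RS(C, |\St(\mu,\partial P)|)$ we get $\sd C \searrow \sd C'$. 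Now $C'$ is a subdivision of the collapsible polytopal complex $\St(\mu,\partial P)$, whose facets are convex polytopes of dimension $d-1$; combining the inductive hypothesis (each $\sd$ of a subdivided $(d-1)$-face is collapsible) with Lemma~\ref{lem:uc} (gluing collapsible pieces along collapsible intersections) and the shelling order on $\St(\mu,\partial P)$, one shows $\sd C'$ is collapsible. Therefore $\sd C$ collapses onto a collapsible complex, hence is collapsible.

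In short: first set up the induction and reduce, via Lemma~\ref{lem:BruMani} and Proposition~\ref{thm:hudson}, the collapsibility of $\sd C$ to the collapsibility of $\sd C'$ where $C'$ subdivides a star in $\partial P$; then assemble $\sd C'$ from the inductively-collapsible subdivided facets using the shelling of $\St(\mu,\partial P)$ and Lemma~\ref{lem:uc}; the delicate point throughout is to track how restriction $\RS(\cdot,\cdot)$ interacts with derived subdivisions so that Proposition~\ref{thm:hudson} applies verbatim at each stage.
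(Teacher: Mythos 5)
Your proposal eventually lands on a correct argument, but it is far longer than it needs to be, and the first half is a false start. The gradient-matching plan (building a star-minimal distance function on $\sd C$ and counting critical pairs via Proposition~\ref{thm:formula}) is never carried to completion -- you yourself flag the boundary behavior as an obstacle and then abandon the approach -- so it contributes nothing to the final argument and should simply be cut.

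The second half is where you actually prove something, and it is close to the paper but misses a shortcut that collapses the whole thing to two lines. Lemma~\ref{lem:BruMani} does not merely say $P \searrow \St(\mu,\partial P)$; it also says $\St(\mu,\partial P)$ is \emph{collapsible}, so $P$ itself is collapsible, i.e., $P$ collapses all the way to a vertex $v$. Now apply Proposition~\ref{thm:hudson} once, with the polytopal complex $\{P \text{ and its faces}\}$, the subcomplex $\{v\}$, and $D := C$: since $\RS(C,\{v\}) = \{v\}$ (vertices of $P$ are necessarily vertices of any subdivision), you get $\sd C \searrow \sd\{v\} = \{v\}$, and you are done. By instead stopping the collapse at $\St(\mu,\partial P)$ and only then passing through Proposition~\ref{thm:hudson}, you are forced into an induction on dimension, a shelling of $\St(\mu,\partial P)$, and a gluing argument via Lemma~\ref{lem:uc} -- all of which amounts to re-deriving, by hand, the collapsibility of $\St(\mu,\partial P)$ that Lemma~\ref{lem:BruMani} already hands you for free. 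The gluing step itself is also not as routine as you suggest: to apply Lemma~\ref{lem:uc} at each stage of the shelling you need each $\sd(\RS(C,F_j))$ to collapse onto $\sd(\RS(C, F_j \cap (F_1\cup\cdots\cup F_{j-1})))$, which is another instance of Proposition~\ref{thm:hudson} that you would have to justify by collapsing the polytope $F_j$ onto a shellable ball in its boundary -- essentially another run of Lemma~\ref{lem:BruMani}. In short: keep the final paragraph's tools (Lemma~\ref{lem:BruMani} plus Proposition~\ref{thm:hudson}), drop the induction, the shelling, the gluing, and the entire gradient-matching preamble, and use the full strength of Lemma~\ref{lem:BruMani} to collapse $P$ to a point in one shot.
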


\begin{proof}
By Lemma~\ref{lem:BruMani}, the polytope $P$ is collapsible. Now for any subdivision $C$ of $P$, the facets of $C$ are all convex polytopes (not necessarily simplicial). Hence $\sd C$ is collapsible by Theorem~\ref{thm:hudson}.
\end{proof}

\section{Proofs of the main theorems} \label{sec:2}

\subsection{Proof of Theorem  \ref{mainthm:CATcollapsible}} \label{subsec:2}
Our approach consists in applying the ideas of Section~\ref{subsec:1} to the case where $C$ is a $\CAT(0)$ complex, and the function $f : C \rightarrow \mathbb{R}$ is the distance from a given vertex $v$ of $C$. Because of the $\CAT(0)$ property, this function is star-minimal and hence it induces a gradient matching on the complex. 

Recall that {\em ridges} are facets of facets of a simplicial complex. 

\begin{theorem}\label{thm:dischadamard}
Let $C$ be a  $\CAT(0)$ intrinsic locally finite simplicial complex such that
$\St (\sigma,C)$ in $C$ is convex for each  ridge $\sigma$. 
Then $C$ is arborescent.
\end{theorem}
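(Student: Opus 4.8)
The plan is to fix a vertex $v$ of $C$ and let $f:|C|\to\mathbb{R}$ be the distance function from $v$ with respect to the polyhedral $\CAT(0)$ metric; after a harmless generic perturbation ensuring condition (iii) of star-minimality (no two vertices share a value, which can be arranged by choosing $v$ generically, or by a tie-breaking rule), I want to show $f$ is star-minimal in the sense of Subsection~\ref{subsec:1}. Continuity is clear. The crucial point is condition (ii): on the star $\St(\sigma,C)$ of each face $\sigma$, $f$ has a unique absolute minimum. Here is where the hypothesis enters: $\St(\sigma,C)$ is convex for each ridge $\sigma$, hence in particular $\St(\sigma,C)$ is convex whenever $\dim\sigma\ge d-2$; and for lower-dimensional faces $\sigma$, $\St(\sigma,C)\subseteq\St(\tau,C)$ for the ridges $\tau\supseteq\sigma$ — wait, more carefully, $\St(\sigma,C)=\bigcup_{\tau\supseteq\sigma}\St(\tau,C)$ over ridges (or facets) $\tau$, so convexity of each piece does not immediately give convexity of the union. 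The honest route is: a point $m$ realizing the minimum of $f$ on $\St(\sigma,C)$ lies in the relative interior of some face $\rho\supseteq\sigma$; the local structure of $C$ near $m$ inside $\St(\sigma,C)$ is governed by $\St(\rho,C)$ together with the link data, and by Lemma~\ref{lem:unique} applied inside the relevant convex stars one gets uniqueness of the local minimum, and in a $\CAT(0)$ space a unique local minimum of a convex function on a (locally convex, connected) set is the global minimum. So I would argue: the distance from $v$ is a convex function on $|C|$ (geodesic convexity in $\CAT(0)$), $\St(\sigma,C)$ for a ridge is convex, and for general faces one shows the minimum on $\St(\sigma,C)$ is attained at a point whose minimal carrier $\rho$ has $\St(\rho,C)$ convex — or one reduces directly to the ridge/facet case by noting any minimizing point's carrier, combined with one adjacent facet, sits in a convex star. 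This verification is the main obstacle and the place where I expect to spend the most care.

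Granting star-minimality, Proposition~\ref{thm:formula} immediately yields that the gradient matching $\Theta_f$ is a Morse matching. Then I must count critical faces: a critical face corresponds to a pair $(w,\tau)$ with $w\subset\tau$, $y_f(\tau)=w$, and $y_f(\tau-w)\ne w$. I claim the only critical face is $\{v\}$ itself. Indeed, for any face $\tau$, the pointer $y_f(\tau)$ is the vertex of the minimal carrier $\mu(\tau)$ (of the minimum of $f$ on $\St(\tau,C)$) on which $f$ is smallest; because $f$ is the distance from $v$ and $C$ is $\CAT(0)$, moving "downhill" toward $v$ along the unique geodesic always decreases $f$, and the minimum of $f$ on $\St(\tau,C)$ is at the point of $\St(\tau,C)$ closest to $v$. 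If $v\notin\St(\tau,C)$ then this closest point lies on the boundary facing $v$, and one checks the standard fact (as in \cite{ABpart1,ABpart2}) that the pointer-and-critical-pair condition fails, i.e. $\tau$ is not critical; if $v\in\St(\tau,C)$, i.e. $v$ is a vertex of $\tau$ or $\tau$ lies in $\St(v,C)$, then the minimum of $f$ on $\St(\tau,C)$ is $v$ (distance $0$), so $y_f(\tau)=v$, and then $\tau$ is critical iff $y_f(\tau-v)\ne v$, which forces $\tau=\{v\}$ (for $\tau\supsetneq\{v\}$ containing $v$, $\tau-v$ still has $v$ in its star so $y_f(\tau-v)=v$). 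Hence $c_0=1$ and $c_i=0$ for $i\ge1$: there is exactly one critical cell.

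With a discrete Morse function on $C$ having a single critical face, Theorem~\ref{lem:equivalenceinf} gives directly that $C$ is arborescent, completing the proof. To be safe about the passage from finite to infinite complexes, I would also note explicitly that $f$ being proper (closed metric balls in a complete locally compact $\CAT(0)$ space are compact, and the complex is locally finite with finitely many isometry types of cells) means the sublevel complexes $\RS(C, f^{-1}([0,r]))$ exhaust $C$ by finite subcomplexes compatible with the matching, so the matching restricts to each and Theorem~\ref{lem:equivalenceinf} applies as stated. The one technical wrinkle I want to flag is matching the statement of Theorem~\ref{thm:dischadamard} (hypothesis on ridges) with the earlier Theorem~\ref{mainthm:CATcollapsible} (hypothesis on vertex stars): since vertex-star convexity implies convexity of the star of every face containing that vertex, in particular of every ridge's star, Theorem~\ref{thm:dischadamard} is the more general statement and the deduction of Theorem~\ref{mainthm:CATcollapsible} is immediate; I would remark on this so the reader sees the two are consistent. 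The genuine mathematical content, and the step most likely to require a delicate argument, remains the verification that the distance function is star-minimal using only ridge-star convexity rather than full local convexity of $C$.
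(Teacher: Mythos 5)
Your plan coincides with the paper's --- distance function from a generic basepoint, verify star-minimality, apply Proposition~\ref{thm:formula}, show there is a single critical face, conclude by Theorem~\ref{lem:equivalenceinf} --- but you explicitly defer the two verifications where the actual mathematical content lies, and in both places what you sketch is not quite how the argument closes.

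For star-minimality, the paper does not argue face by face about the carrier of a minimizing point and ``adjacent facets.'' It first upgrades ridge-star convexity to convexity of \emph{vertex} stars: convexity of ridge stars makes $\St(v,C)$ locally convex, and a closed connected locally convex subset of a $\CAT(0)$ space is convex (the local-to-global principle of \cite{BuW}), with a quick uniqueness-of-geodesics contradiction as a sanity check. Since $\St(\sigma,C)=\bigcap_{v\in\sigma}\St(v,C)$, every face star is then convex, and Lemma~\ref{lem:unique} gives condition (ii) uniformly, with no case analysis on the carrier. Your alternative route --- reduce to a ridge whose star contains the minimizing point --- is not obviously sound, because uniqueness of a local minimum on one convex piece does not by itself rule out a second local minimum elsewhere in the (a priori non-convex) union $\St(\sigma,C)$; the convexity upgrade is precisely what eliminates that possibility.

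For the critical-cell count, your case $v\in\St(\tau,C)$ is fine, but the essential case $v\notin\St(\tau,C)$ is waved off to ``the standard fact,'' and that is the heart of the proof. The paper's argument is uniform and short: if $\tau$ has positive dimension, set $w:=y_{\mathrm{d}}(\tau)$; by Lemma~\ref{lem:unique} the minimum $m$ of $\mathrm{d}$ on $\St(\tau,C)$ lies in the relative interior of a face $\sigma_w\ni w$; any face of $\St(\tau-w,C)$ containing $\sigma_w$ contains $w$ and therefore $\tau$, so $\St(\tau,C)$ and $\St(\tau-w,C)$ agree near $m$; both are convex, so by Lemma~\ref{lem:unique} their minima coincide, giving $y_{\mathrm{d}}(\tau-w)=w$, i.e.\ $(w,\tau)\notin\mathfrak{P}$ and $\tau$ is not critical. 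You need to supply this argument (or an equivalent one) rather than appeal to it; as written, the two steps you flag as ``requiring care'' are exactly the parts of the theorem that are nontrivial, and the proposal does not yet prove them.
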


\begin{proof}
In \cite{ABpart1} it was already proved that $\St (\sigma,C)$ in $C$ is convex for cell $\sigma$. 
If all ridges have convex stars, then for every vertex $v$ the closest-point projection to $\St(v,C)$ is a well-defined locally non-expansive map and hence globally non-expansive, as closed connected locally convex subsets are convex \cite{BuW}. 
If $x$ and $y$ are two points in $\St(v,C)$ for which the geodesic $\gamma$ from $x$ to $y$ leaves $\St(v,C)$. The projection $\gamma'$ of $\gamma$ on $\St(v,C)$ is a curve joining $x$ and $y$ which is contained in $\St(v,C)$ and  
is not longer than $\gamma$. This contradicts the uniqueness of  geodesic segments in $\CAT(0)$ spaces.

Fix $x$ in $C$ and let $\mathrm{d} : |C| \longmapsto \mathbb{R}$ be the distance from $x$ in $|C|$.  
By possibly slightly  moving $x$ there exists an unique vertex  $w$ of $C$ that minimizes $\mathrm{d}$. 
Note that $\mathrm{d}$ is a function that has a unique local minimum on the star of each face by Lemma~\ref{lem:unique}.
Let us perform on the face poset of $C$ the Morse matching constructed in Proposition~\ref{thm:formula}. The vertex $w$ will be mapped onto itself and for each vertex $u \neq w$ we have $y_{\mathrm{d}}(u) \ne u$. So  every vertex is matched with an edge, apart from $w$, which is the only critical vertex.

Assume that there is a critical face $\tau$ of dimension  at least $1$. Consider 
$v:=y_{\mathrm{d}}(\tau)$. By Lemma~\ref{lem:unique},
the restriction of the  function~${\mathrm{d}}$ to  $\St (\tau,C)$  attains its minimal value in the relative interior of a face $\sigma_v \in \St (\tau,C)$ that contains $v$.
Let $\delta$ be any face of $\St (\tau- v,C)$ containing $\sigma_v$, so that  $\delta$ contains $v$. 
Therefore $\St (\tau,C)$ and $\St (\tau- v,C)$ coincide in a neighborhood of $v$.
By Lemma~\ref{lem:unique}, the points where $\mathrm{d}$ attains its minima on   
$\St (\tau- v,C)$ and $\St(\tau,C)$ respectively, coincide. 
This implies that $y_{\mathrm{d}}(\tau- v)=y_{\mathrm{d}}(\tau)= v$, and hence:  
\[(v,\tau) \notin \{(v,\tau): v \subset \tau, \, y_{\mathrm{d}}(\tau)=~v \textrm{ and } y_{\mathrm{d}}(\tau- v) \ne v\}.\] 
By Proposition~\ref{thm:formula} $\tau$ is not critical, which contradicts our assumption.

We then infer from Theorem \ref{lem:equivalenceinf} that $C$ is arborescent. 
\end{proof}

Theorem~\ref{thm:dischadamard} can be extended even to polytopal complexes that are not simplicial.  

\begin{theorem}\label{thm:dischadamardpoly}
Let $C$ be any  $\CAT(0)$ intrinsic  locally finite polytopal complex such that
 for each face $\sigma$ in $C$, the underlying space of $\St (\sigma,C)$ in $C$ is convex. 
Then $C$ is arborescent. 
\end{theorem}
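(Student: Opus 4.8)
The plan is to reduce Theorem~\ref{thm:dischadamardpoly} to the simplicial case already handled in Theorem~\ref{thm:dischadamard} by passing to a sufficiently fine subdivision, while keeping the $\CAT(0)$ metric and the convexity of vertex stars under control. The natural candidate is the derived (in particular barycentric) subdivision $\sd C$, or more precisely a derived subdivision obtained by stellarly subdividing all faces of $C$ in decreasing order of dimension, placing the new vertices at the barycenters of the (convex) cells. Since each cell of $C$ is a totally geodesic convex polytope in the space form of curvature $0$, this stellar subdivision produces a genuine intrinsic simplicial complex $\sd C$ with the \emph{same} underlying metric space $|C|$, which is therefore still $\CAT(0)$ with a polyhedral metric. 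So the first step is to record that $\sd C$ is a locally finite intrinsic $\CAT(0)$ simplicial complex realizing the same length space.

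Next I would verify the hypothesis of Theorem~\ref{thm:dischadamard} for $\sd C$: that $\St(\sigma, \sd C)$ is convex in $|C|$ for every ridge $\sigma$ of $\sd C$ — better, I would aim to show $\St(v, \sd C)$ is convex for every vertex $v$ of $\sd C$, since that is what the proof of Theorem~\ref{thm:dischadamard} really uses (via the observation that closed connected locally convex subsets of a $\CAT(0)$ space are convex, \cite{BuW}). This is where the hypothesis ``the underlying space of $\St(\sigma, C)$ in $C$ is convex for each face $\sigma$ of $C$'' gets used. A vertex $v$ of $\sd C$ is the barycenter $b_\sigma$ of a unique face $\sigma$ of $C$. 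I would show that $|\St(b_\sigma, \sd C)|$ is contained in $|\St(\sigma, C)|$ and, more importantly, that it is obtained from the convex set $|\St(\sigma, C)|$ by a cutting-down procedure that preserves convexity — concretely, that $|\St(b_\sigma, \sd C)| = \{ x \in |\St(\sigma, C)| : \text{the minimal face of } \St(\sigma,C) \text{ through } x \text{ ``sees'' } \sigma \text{ first}\}$, and that this is an intersection of $|\St(\sigma,C)|$ with finitely many totally geodesic half-spaces inside the cells containing $\sigma$. Since a finite intersection of convex sets is convex, this yields convexity of $|\St(b_\sigma, \sd C)|$. Alternatively, and perhaps more cleanly, I would argue locally: $\St(b_\sigma,\sd C)$ is locally convex at each of its points because in the link at any point the subdivision is compatible with the metric, and then invoke \cite{BuW} to upgrade local convexity to global convexity since $|\St(b_\sigma,\sd C)|$ is connected and closed.

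With $\sd C$ an intrinsic $\CAT(0)$ locally finite simplicial complex all of whose vertex (hence ridge) stars are convex, Theorem~\ref{thm:dischadamard} applies directly and gives that $\sd C$ is arborescent. Finally I would transfer arborescence back from $\sd C$ to $C$: but here one should be careful, since arborescence is a combinatorial property and a priori it is $\sd C$, not $C$ itself, that we have shown arborescent. For the purposes of this theorem that is already the desired conclusion if we read ``$C$ is arborescent'' in the sense ``$C$ has an arborescent subdivision''; if an honest statement about $C$ is wanted, I would note that in fact the Morse-theoretic argument of Theorem~\ref{thm:dischadamard}, applied with the distance function from a generic point $x$, produces a discrete Morse function on $\sd C$ with a single critical cell, and by Theorem~\ref{lem:equivalenceinf} (and its finite analogue Theorem~\ref{lem:equivalence} applied to an exhausting sequence of finite subcomplexes) this gives the ascending-union-of-collapsing-subcomplexes description; since every finite subcomplex of $C$ is refined by a finite subcomplex of $\sd C$ which collapses, and collapsibility of $\sd K$ for $K$ a subcomplex can be pushed down using Proposition~\ref{thm:hudson}, one recovers the statement at the level of $C$ up to subdivision, which is exactly what ``$C$ is arborescent'' should mean in this polytopal setting.

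The main obstacle I expect is the second step: proving that vertex stars of the derived subdivision are convex in the intrinsic metric. The subtlety is that a stellar subdivision at a barycenter $b_\sigma$ cuts the surrounding cells along hyperplanes that are totally geodesic within each cell but need \emph{not} fit together into a globally convex object unless one exploits precisely the $\CAT(0)$ condition plus convexity of $|\St(\sigma,C)|$; getting the local-convexity-implies-global-convexity argument to apply requires checking that $|\St(b_\sigma,\sd C)|$ is closed and connected and that at boundary points the relevant tangent cone is genuinely convex, which involves a link computation of the type set up in the Preliminaries (tangent cones, $\RN^1_{(p,\sigma)}$, etc.). Once that geometric input is in place, everything else is bookkeeping and an appeal to the already-proven simplicial theorem.
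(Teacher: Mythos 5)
Your approach---pass to the barycentric subdivision $\sd C$, verify that its vertex stars are convex, and invoke Theorem~\ref{thm:dischadamard}---is genuinely different from the paper's, and unfortunately it contains a gap that is not a matter of bookkeeping. If you succeed with your plan, you prove that $\sd C$ is arborescent; the theorem asserts that $C$ itself, as a polytopal complex, is arborescent. Collapsibility (and arborescence) is a combinatorial property of a fixed face poset, and collapsibility of $\sd C$ does \emph{not} imply collapsibility of $C$. The implication you want does not follow from Proposition~\ref{thm:hudson}, which you cite to ``push down'' collapses: that lemma transports a collapse $C \searrow C'$ \emph{forward} to a collapse $\sd D \searrow \sd D'$ of a derived subdivision, never backward from a subdivision to the original complex. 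So there is no mechanism in your argument to recover the statement for $C$. You essentially concede this by proposing to reinterpret ``$C$ is arborescent'' as ``some subdivision of $C$ is arborescent''; but that reinterpretation gives at most the conclusion of Theorem~\ref{arborescent}, which holds without any convexity assumption and is a strictly weaker statement. The convexity hypothesis on all face stars is precisely what lets one avoid subdividing.

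The secondary issue is your second step, which you yourself flag as the ``main obstacle'': proving that $|\St(b_\sigma,\sd C)|$ is convex. The cutting hyperplanes appearing in the stellar subdivision are totally geodesic inside each cell but are defined cell-by-cell and need not extend to globally totally geodesic hypersurfaces; the local-to-global convexity upgrade via \cite{BuW} would require an actual computation of the tangent cone of $|\St(b_\sigma,\sd C)|$ at boundary points, which you sketch but do not carry out. Even granting it, the transfer problem above remains.

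The paper's proof works directly on the polytopal complex $C$, with no subdivision. The $\CAT(0)$ condition together with the convexity of stars supplies (via Lemma~\ref{lem:unique}) a proper function $f$ on $|C|$ (the distance from a generic point) that has a unique local minimum on each vertex star. One then collapses $C$ inductively: take the facet $\sigma$ on which $\min f$ is largest, let $\mu$ be the face of $\sigma$ minimizing $f$, let $F$ be the subcomplex of facets attaining their minimum at $\mu$, and collapse each facet $P$ of $F$ onto $\St(\mu,\partial P)$ using Lemma~\ref{lem:BruMani} (a shelling argument for polytopes). One checks that the restriction of $f$ to $C - F$ still has the unique local minimum property on vertex stars, and iterates. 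This avoids the subdivision entirely and yields arborescence of $C$ as claimed; it is the only route in the paper that produces a statement about $C$ rather than about $\sd C$.
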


\begin{proof}
We will prove that every polytopal complex which admits a proper function $f:|C|\rightarrow \R$ that takes a unique local minimum on each vertex star is  arborescent.

We use  induction. Let $\sigma$ be a facet of $C$ maximizing $\min_{\sigma} f$, and let $\mu$ denote the strict face of $\sigma$ that minimizes  $f$. Let $F \subset \St (\mu,C)$ be the subcomplex induced by the facets of $C$ that attain their minimum at $\mu$. 
By Lemma~\ref{lem:BruMani}, we can collapse each facet $P$ of $F$ to $\St(\mu, \partial P)$. Hence, we can collapse $F$ to 
\[ \bigcup_{P \in F} \St(\mu, \partial P)  = \St (\mu,C) \: \cap \: \bigcup_{P \in F} \partial P, \]
where $P$ ranges over the facets of $F$. In particular, we can collapse $C$ to $C'=C-F$.

\begin{figure}[ht]
	\centering
	\vskip-1mm
  \includegraphics[width=.35\linewidth]{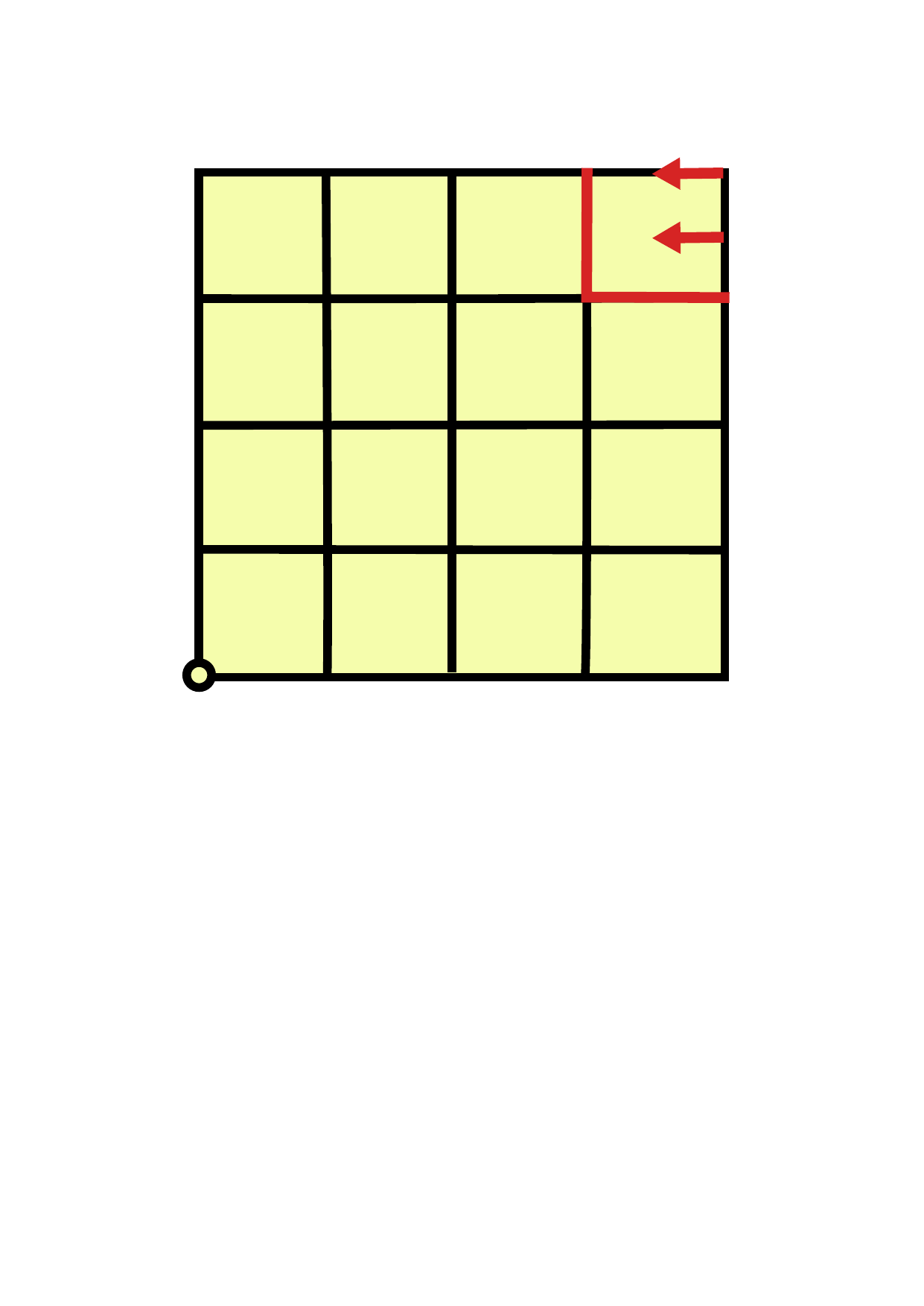}
 	\caption{\footnotesize Collapsing a $\CAT(0)$ cubical complex to the bottom left vertex $v$, where $\sigma$ is the top right square and $F$ is the red complex}
	\label{fig:grid}
\end{figure}

It remains to show that the restriction $f_{|C'}$ of $f$ to $C'$
attains a unique local minimum on each star. Assume the contrary, namely that for some vertex $w$ of $C$ the function $f_{|C'}$ has two local minima on $\St(w,C')$. Then  $w$ is in $F$. Let $x$ be the absolute minimum of $f$ restricted to $\St (w,C')$ and $y$ be the other local minimum. Let $P$ be a facet of $C-F$ containing $x$. When restricted to $C$, the function $f$ attains a unique local minimum on the star of every face. Therefore, the point $y$ must lie in $F$. In particular, the facet $P$ must contain the minimum of $f$ on $F$. But $y$ is not that local minimum since $P$ is not in $F$, so $f$ takes two local minima on $P$, contradicting the assumption on $f$.
\end{proof}

\subsection{Proof of Corollary \ref{mainthm:CATcube}}
A simplex is called \ \emph{non-obtuse} if the dihedral angle between any two facets is smaller than or equal to $\frac{\pi}{2}$. In any non-obtuse simplex, all faces are themselves non-obtuse simplices.  A simplex is called \emph{equilateral} or \emph{regular} if all edges have the same length. Equilateral simplices are obviously non-obtuse. By subdividing a cubical grid, one can obtain non-obtuse triangulations of $\mathbb{R}^d$ and of the $d$-cube for any $d$.
The next, straightforward lemma characterizes these notions in terms of orthogonal projections.

\begin{lemma}
A $d$-simplex $\Delta$ is \ non-obtuse if and only if, for each facet $F$ of $\partial \Delta$, the closest-point projection $\pi_{\Delta - F}\, \mathrm{span} F$ of the vertex $\Delta - F$ to the affine span of $F$ intersects $F$. 
\end{lemma}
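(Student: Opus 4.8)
The plan is to prove both directions of the equivalence by relating the dihedral angle at a facet to the sign of an inner product, and then translating that inner product condition into a statement about where the foot of the altitude from the opposite vertex lands.

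\textbf{Setup.} Fix a facet $F$ of $\Delta$ and let $w$ denote the unique vertex of $\Delta$ not lying on $F$, i.e. $w$ is the vertex spanning $\Delta - F$. Let $H = \mathrm{span}\, F$ be the affine hyperplane through $F$, let $q = \pi_{\Delta - F}\, \mathrm{span}\, F$ be the orthogonal projection of $w$ onto $H$, and let $n$ be the inward unit normal to $H$ (pointing to the side containing $w$), so that $w = q + h\, n$ with $h = |wq| > 0$. The idea is to examine, for each other facet $G$ of $\Delta$ (each such $G$ contains $w$ and meets $F$ in a ridge $R = F \cap G$), the dihedral angle between $F$ and $G$ along $R$.

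\textbf{From dihedral angles to a face of $F$.} First I would observe that the dihedral angle of $\Delta$ at the ridge $R = F \cap G$ is at most $\pi/2$ if and only if the outward normal direction of $G$ within $H$, measured at $R$, points away from the interior of $F$ — equivalently, the foot $q$ of the altitude lies in the closed halfspace of $H$ bounded by $\mathrm{aff}(R)$ on the same side as $F$. Concretely: decompose the edge from $w$ to a vertex $v_G$ of $G$ (or better, use the normal to $G$) into its $n$-component and its component in $H$; the $H$-component is, up to scaling, the inward normal to $G$ restricted to $H$, and the dihedral condition $\angle(F,G) \le \pi/2$ translates precisely into the statement that $q$ lies on the $F$-side of the supporting hyperplane $\mathrm{aff}(R)$ of $F$. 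I would make this precise by writing the facet normals of $\Delta$ in coordinates adapted to $H$ and computing the inner product of the $F$-normal $n$ with the $G$-normal; the cross-term that controls the angle is exactly the inner product of $n$ with the component of the $G$-normal along $H$, which in turn measures on which side of $\mathrm{aff}(R)$ the point $q$ sits.

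\textbf{Assembling the two directions.} For the forward direction: if $\Delta$ is non-obtuse, then for \emph{every} facet $G \ne F$ the dihedral angle at $F \cap G$ is $\le \pi/2$, so by the previous step $q$ lies on the closed $F$-side of $\mathrm{aff}(F \cap G)$ for every ridge $F \cap G$ of $F$; since $F$ is the intersection of these closed halfspaces within $H$ (the ridges $F \cap G$, as $G$ ranges over facets of $\Delta$ other than $F$, are exactly the facets of the $(d-1)$-simplex $F$), it follows that $q \in F$, hence $q \in \pi_{\Delta-F}\,\mathrm{span}\,F \cap F$, which is nonempty. For the converse: if $q \in F$, then $q$ lies on the $F$-side of each $\mathrm{aff}(F\cap G)$, so each dihedral angle of $\Delta$ along a ridge of $F$ is $\le \pi/2$; since this holds for the chosen facet $F$ but $F$ was arbitrary, running it over all facets shows all dihedral angles of $\Delta$ are $\le \pi/2$, i.e. $\Delta$ is non-obtuse. (One should note $\pi_{\Delta - F}\,\mathrm{span}\,F$ is a single point because $\mathrm{span}\,F$ is an affine subspace, so ``intersects $F$'' just means ``lies in $F$''.)

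\textbf{Expected main obstacle.} The only real work is the bookkeeping in the middle step: carefully identifying the dihedral angle at the ridge $F \cap G$ with the sign of $\langle n,\, \text{(component along } H \text{ of the inward normal to } G)\rangle$, and confirming that this sign flips precisely as $q$ crosses the affine hull of $F \cap G$. This is a routine linear-algebra computation once coordinates are fixed (put $H = \{x_d = 0\}$, $n = e_d$, $w = q + h e_d$), but it requires keeping orientations consistent, so I would do it once cleanly rather than gesture at it. Everything else — that the ridges of $F$ are the sets $F \cap G$, that $F$ is cut out inside $H$ by the corresponding halfspaces — is standard simplex combinatorics.
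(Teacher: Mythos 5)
The paper itself does not prove this lemma; it is dismissed as ``straightforward.'' Your strategy --- reduce each dihedral angle of $\Delta$ along a ridge $R = F \cap G$ to a sign condition, then show that sign records on which side of $\mathrm{aff}(R)$ the foot $q$ of the altitude from $w$ falls, and finally use that $F$ is the intersection inside $H = \mathrm{span}\,F$ of the closed halfspaces bounded by its facet hyperplanes $\mathrm{aff}(F\cap G)$ --- is the right one, and your assembly of the two directions is sound. Your remark that $\pi_w(\mathrm{span}\,F)$ is a singleton, so ``intersects'' means ``lies in,'' is also correctly flagged.

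The one place where the write-up is not just loose but actually says something false is the identification of the ``cross-term.'' With $H = \{x_d = 0\}$, $n = e_d$, $n_F = -e_d$ the outward normal to $F$, and $n_G = a\,e_d + b\,u$ (where $u \in H$ is the unit outward normal to $F$ along $R$, so $b>0$), the non-obtuse condition $\langle n_F, n_G\rangle \le 0$ is simply $a \ge 0$; the inner product of $n$ with the $H$-component $b\,u$ of $n_G$ is identically zero, and the $H$-component of $n_G$ always points away from $F$ regardless of the angle, so neither can ``control the angle'' or ``measure which side $q$ sits on'' as you assert. What actually ties the sign of $a$ to the position of $q$ is a \emph{separate} orthogonality constraint: $n_G$ is normal to $G$, hence to the vector $w - r_0 = (q - r_0) + h\,e_d$ for any $r_0 \in R$, giving $a h = -b\,\langle u, q - r_0\rangle$. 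Since $b,h > 0$, one gets $a \ge 0 \iff \langle u, q - r_0\rangle \le 0 \iff q$ lies on the closed $F$-side of $\mathrm{aff}(R)$. That is the computation you said you would ``do once cleanly''; as written, the sketch points at the wrong inner product. Once replaced by the line above, the proof is complete.
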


Let $C$ be an intrinsic simplicial complex in which every face of $C$ is isometric to a regular euclidean simplex. If such $C$ is $\CAT(0)$, we say that it is \emph{$\CAT(0)$ with the equilateral flat metric}. More generally, a $\CAT(k)$ intrinsic simplicial complex $C$ is  \emph{$\CAT(k)$ with a non-obtuse metric} if every face of $C$ is non-obtuse.

We first show that
\begin{proposition} \label{cor:CrowleyA}
Let $C$ be an intrinsic locally finite simplicial complex. Suppose that $C$ is $\CAT(0)$ with a non-obtuse metric. 
Then $C$ is arborescent.
\end{proposition}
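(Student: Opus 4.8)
The strategy is to reduce Proposition~\ref{cor:CrowleyA} to the already-established Theorem~\ref{thm:dischadamard}: since every vertex star of $C$ is automatically convex when the metric is non-obtuse, the hypothesis of Theorem~\ref{thm:dischadamard} is satisfied and arborescence follows. So the real content is the \emph{geometric} claim that in a $\CAT(0)$ intrinsic simplicial complex with a non-obtuse metric, every vertex star --- indeed every star of a ridge --- is convex.

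\textbf{Key steps.} First I would recall from \cite{ABpart1} (quoted in the passage) that for a $\CAT(0)$ polyhedral complex, the star of \emph{any} cell $\sigma$ is a closed connected subspace of $|C|$, so by \cite{BuW} it suffices to prove \emph{local} convexity, i.e.\ that at each point $p\in \St(\sigma,C)$ the tangent cone $T_p\St(\sigma,C)$ is convex inside the $\CAT(1)$ link $\RN^1_{(p,\cdot)}|C|$. Second, I would localize further: the obstruction to local convexity of a vertex star $\St(v,C)$ can only occur along the boundary $(C-v)\cap\St(v,C)=\Lk(v,C)$, and there it is detected on the links $\Lk_p(\tau,C)$ for faces $\tau$ of $\Lk(v,C)$; equivalently, one must check that geodesics in the link do not exit the subcomplex spanned by the star. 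Third, the crucial computation: for a non-obtuse simplex $\Delta$ and a facet $F=\Delta-w$, the preceding Lemma says the closest-point projection of the apex $w$ onto $\mathrm{span}\,F$ lands inside $F$; dually, the dihedral angles of $\Delta$ at $F$ are all $\le \pi/2$. Translating this to spherical links, the spherical simplex $\Lk(\tau,\Delta)$ has the property that, at every vertex, every edge leaves at an angle $\ge \pi/2$ from the faces ``pointing away'' from $v$ --- this is precisely what makes the union of simplices containing $v$ locally geodesically convex in the link. One then assembles these local facts: a geodesic in $\Lk_p(\tau,C)$ that tries to leave $\St(v,C)$ would have to cross a codimension-one face along which the two adjacent spherical simplices meet at a dihedral angle $\ge\pi/2$ on the ``inside'', forcing the geodesic (which must be locally straight, i.e.\ make angle $\pi$ across such a wall only if it stays on a great circle) back in; non-obtuseness rules out the escape. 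Since this holds at every point, $\St(v,C)$ is locally convex, hence convex, and the same argument applied to $\Lk(\sigma,C)$ for $\sigma$ a ridge gives convexity of ridge stars. Finally, invoke Theorem~\ref{thm:dischadamard} to conclude $C$ is arborescent.

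\textbf{Main obstacle.} The delicate point is the spherical-geometry lemma: showing that non-obtuseness of all simplices forces every vertex star to be convex. One must be careful that convexity of a simplex's dihedral angles is a condition on \emph{codimension-two} faces, while convexity of the star must be verified on links of \emph{all} faces $\tau\subset\Lk(v,C)$ of every dimension, so an inductive argument on $\dim\tau$ (using that faces of non-obtuse simplices are non-obtuse, as the passage notes) is needed to propagate the codimension-two angle bound to a statement about geodesics in higher links. A secondary subtlety is that $\St(v,C)$ is a manifold-with-corners rather than a manifold-with-smooth-boundary, so ``support hyperplane'' arguments must be phrased via tangent cones and intersections of half-spaces, exactly as set up in the Preliminaries; one should check the tangent cone $T_p\St(v,C)$ is genuinely an intersection of the half-spaces cut out by the support hyperplanes of the ambient cells, and that this intersection is convex precisely because each wall contributes a half-space making an angle $\le\pi/2$ with the others. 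Once this spherical convexity statement is nailed down, the rest is a direct appeal to Theorem~\ref{thm:dischadamard}.
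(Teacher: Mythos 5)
Your overall strategy matches the paper's: reduce to Theorem~\ref{thm:dischadamard} by showing the relevant stars are convex, and use local convexity plus the local-to-global principle of \cite{BuW} (closed, connected, locally convex subsets of a $\CAT(0)$ space are convex). The final appeal to the theorem is identical.

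However, you take a noticeably harder route to the local-convexity claim, and the ``delicate point'' you flag is precisely a complication the paper avoids. Theorem~\ref{thm:dischadamard} only needs the stars of \emph{ridges} to be convex, and for a ridge $R$ the star is a union of facets glued along the codimension-one face $R$. Local convexity there is a direct, one-step consequence of non-obtuseness: each facet is convex, and at $R$ the dihedral angle of each facet is $\le \pi/2$, so two facets meeting along $R$ do so at total angle $\le \pi$, hence their union is locally convex near $R$. That is the entire geometric content in the paper's proof; no link, no spherical induction, no tangent-cone bookkeeping. You instead aim first at convexity of \emph{vertex} stars, which forces you to climb down through links of faces of all dimensions and propagate the codimension-two angle condition inductively via the fact that links of non-obtuse simplices are non-obtuse. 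That argument can be made to work (it is in spirit the inductive machinery used elsewhere in the paper for $\CAT(1)$ links), but it is substantially more involved and is not needed here: ridge stars are what the theorem requires, and for them the non-obtuse hypothesis applies in exactly the right codimension without any descent. In short, replace your vertex-star induction with the direct observation about two facets sharing a ridge, and your proof collapses to the paper's four sentences.
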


\begin{proof}
In non-obtuse triangulations, the star of every ridge is convex. In fact, let $\Sigma,\Sigma'$ be two facets containing a common ridge $R$. Observe that $\Sigma \cup \Sigma'$ is locally convex because both $\Sigma$ and $\Sigma'$ are convex and their union is locally convex in a neighborhood of $R$. Since $C$ is $\CAT(0)$, the convexity follows as in the first paragraph of the proof of Theorem \ref{thm:dischadamard}.
\end{proof}

\begin{cor} \label{cor:CrowleyB}
Every intrinsic locally finite  simplicial complex that is $\CAT(0)$ with the equilateral flat metric is arborescent.
\end{cor}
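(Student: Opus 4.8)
The plan is to deduce Corollary~\ref{cor:CrowleyB} directly from Proposition~\ref{cor:CrowleyA}, so the only work is to check that the equilateral flat metric is a non-obtuse metric in the sense defined just above. First I would recall that each face of such a complex $C$ is, by hypothesis, isometric to a regular Euclidean simplex, so it suffices to verify that a regular Euclidean simplex is non-obtuse. For a regular $d$-simplex with unit edges, the dihedral angle between any two facets equals $\arccos\bigl(\tfrac{1}{d}\bigr)$, which is strictly less than $\tfrac{\pi}{2}$ for every $d\ge 1$; alternatively one invokes the remark in the text that equilateral simplices are obviously non-obtuse, or the orthogonal-projection criterion of the preceding lemma, noting that by symmetry the foot of the altitude from a vertex lands at the barycenter of the opposite facet, which certainly lies in that facet. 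Either way, every face of $C$ is non-obtuse, so $C$ is $\CAT(0)$ with a non-obtuse metric.

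Once this is in place, Proposition~\ref{cor:CrowleyA} applies verbatim: an intrinsic locally finite simplicial complex that is $\CAT(0)$ with a non-obtuse metric is arborescent, hence $C$ is arborescent. There is essentially no obstacle here — the statement is a specialization of the more general proposition, and the entire content is the (elementary, and already remarked) observation that regular simplices are non-obtuse. If one wanted to be self-contained about the dihedral-angle computation, the main (mild) subtlety would be to make sure the induced intrinsic metric on $C$ genuinely makes each simplex isometric to the standard regular Euclidean simplex of the same dimension, rather than merely having equal edge lengths in some ambient sense; but this is built into the definition of ``$\CAT(0)$ with the equilateral flat metric'' given in the text, so nothing further is needed.

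\begin{proof}
By definition, if $C$ is $\CAT(0)$ with the equilateral flat metric then every face of $C$ is isometric to a regular Euclidean simplex. As remarked above, equilateral simplices are non-obtuse: the dihedral angle between any two facets of a regular $d$-simplex equals $\arccos(\tfrac1d)\le\tfrac{\pi}{2}$, and by symmetry the closest-point projection of any vertex onto the affine span of the opposite facet is the barycenter of that facet, which lies in it. Hence every face of $C$ is non-obtuse, so $C$ is $\CAT(0)$ with a non-obtuse metric. By Proposition~\ref{cor:CrowleyA}, $C$ is arborescent.
\end{proof}
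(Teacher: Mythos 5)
Your proof is correct and matches the paper's own (implicit) reasoning: the paper states Corollary~\ref{cor:CrowleyB} without a separate proof precisely because it follows immediately from Proposition~\ref{cor:CrowleyA} together with the remark, already made in the text, that equilateral simplices are non-obtuse. Your dihedral-angle computation $\arccos(1/d)<\pi/2$ and the barycenter observation are a fine way to make that remark explicit, but they are not an independent route — this is the same specialization the authors intend.
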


Further Theorem~\ref{thm:dischadamardpoly} holds for \emph{$\CAT(0)$ cube complexes}, which are complexes of regular unit cubes glued together to yield a $\CAT(0)$ metric space and yields: 

\begin{cor} \label{cor:CubeComplexes}
Every  $\CAT(0)$  locally finite cube complex is arborescent.
\end{cor}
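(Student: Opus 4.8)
The strategy is to deduce Corollary~\ref{cor:CubeComplexes} directly from Theorem~\ref{thm:dischadamardpoly}: it suffices to prove that in a locally finite $\CAT(0)$ cube complex $C$ the underlying space of $\St(\sigma,C)$ is convex for every face $\sigma$, arborescence being then immediate. Since $\St(\sigma,C)$ is a closed, connected subcomplex of the complete $\CAT(0)$ space $|C|$, the argument from the first paragraph of the proof of Theorem~\ref{thm:dischadamard} (a closed connected locally convex subset of a $\CAT(0)$ space is convex, \cite{BuW}) reduces this to verifying that $\St(\sigma,C)$ is \emph{locally convex}.

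Local convexity is tested at an arbitrary point $p\in|\St(\sigma,C)|$; let $\rho$ be the face of $C$ with $p\in\rint\rho$, so that a neighbourhood of $p$ in $|C|$ is isometric to $\rho\times\mathrm{cone}(\Lk(\rho,C))$, with $\Lk(\rho,C)$ an all-right spherical complex. If $\sigma\subseteq\rho$, then every cube of $C$ meeting that neighbourhood contains $\sigma$, so $\St(\sigma,C)$ agrees with $|C|$ near $p$ and there is nothing to prove. Otherwise, since $\rho\in\St(\sigma,C)$, the faces $\rho$ and $\sigma$ lie in a common cube of $C$; let $\eta$ be the smallest such cube and $\bar\eta\in\Lk(\rho,C)$ the simplex it determines. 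One checks that, in a neighbourhood of $p$,
\[
\St(\sigma,C)\ \cong\ \rho\times\mathrm{cone}(\St(\bar\eta,\Lk(\rho,C))),
\]
so that local convexity at $p$ reduces to the $\pi$-convexity of the subcomplex $\St(\bar\eta,\Lk(\rho,C))$ inside $\Lk(\rho,C)$.

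This is handled by Gromov's link condition and two combinatorial observations. First, $\Lk(\rho,C)$ is a flag all-right spherical complex, hence $\CAT(1)$: since $C$ is $\CAT(0)$, every vertex link of $C$ is flag, and $\Lk(\rho,C)$ is the link of a simplex inside the vertex link at any vertex of $\rho$, so it too is flag. Second, the star $\St(\bar\eta,L)$ of a simplex in a flag simplicial complex $L$ is a \emph{full} (induced) subcomplex --- if every vertex of a simplex of $L$ either lies in $\bar\eta$ or is adjacent to all vertices of $\bar\eta$, then adjoining the vertices of $\bar\eta$ produces a pairwise-adjacent set, which spans a simplex of $L$ by flagness. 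Consequently $\St(\bar\eta,\Lk(\rho,C))$ is again an all-right spherical $\CAT(1)$ complex (full subcomplexes of flag complexes being flag), and a full $\CAT(1)$ subcomplex of an all-right spherical $\CAT(1)$ complex is $\pi$-convex --- a standard consequence of Gromov's lemma, proved by induction on dimension. This yields local convexity of $\St(\sigma,C)$ at $p$, hence its convexity, and Theorem~\ref{thm:dischadamardpoly} then gives that $C$ is arborescent.

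I expect the technical heart to be this last step --- the $\pi$-convexity of a full subcomplex of an all-right spherical $\CAT(1)$ complex --- together with making the local identification $\St(\sigma,C)\cong\rho\times\mathrm{cone}(\St(\bar\eta,\Lk(\rho,C)))$ fully rigorous, in particular the existence and well-definedness of the minimal cube $\eta$ containing $\rho$ and $\sigma$ (which uses that the intersection of two cubes is a face of each). The fullness statement and the reduction to Theorem~\ref{thm:dischadamardpoly} are routine bookkeeping. (The convexity of stars in $\CAT(0)$ cube complexes is in fact well known; one may alternatively cite it directly and skip the intermediate steps.)
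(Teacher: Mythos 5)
Your proof is correct and follows the same route as the paper: both reduce Corollary~\ref{cor:CubeComplexes} to Theorem~\ref{thm:dischadamardpoly} by appealing to the convexity of cube stars in a $\CAT(0)$ cube complex, which the paper treats as known (it is asserted in the introduction that convexity of stars "is automatically satisfied by any complex in which all simplices are acute or right-angled") rather than re-derived. What you add is a careful verification of that convexity claim --- via the Bux--Witzel local-to-global principle for convexity, the local product structure $\rho\times\mathrm{cone}(\Lk(\rho,C))$, the flagness of links (Gromov's criterion), the fullness of simplex stars in flag complexes, and the $\pi$-convexity of full subcomplexes of flag all-right spherical $\CAT(1)$ complexes (the Haglund-type lemma); all of these are standard facts in the cube-complex literature and the chain of reductions you outline is sound.
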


\subsection{Proof of Corollary \ref{mainthm:CAT(0)man}}
We proved in \cite{AF} the following relation between $\CAT(0)$ complexes and arborescent complexes:

\begin{proposition}\label{thm:collcat}
 Let $C$ be any locally finite arborescent simplicial complex. Then there exists a $\CAT(0)$ cube complex $C'$ that is PL equivalent to $C$.
\end{proposition}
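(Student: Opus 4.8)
The plan is to reconstruct, from the arborescent structure of $C$, a locally finite $\CAT(0)$ cube complex $C'$ with $|C'|$ PL homeomorphic to $|C|$, verifying Gromov's link condition directly rather than trying to push CAT(0) through an infinite union (which is not stable in general). Write $|C|=\bigcup_{i\ge 1}K_i$ with $K_i$ finite, $K_1$ a point, and $K_{i+1}\searrow K_i$. It suffices to build a nested sequence of finite cube complexes $Q_1\subset Q_2\subset\cdots$ and PL homeomorphisms $h_i\colon|Q_i|\to|K_i|$ compatible with the inclusions, such that (i) every vertex link of $Q_i$ is a flag simplicial complex, (ii) $Q_{i+1}\searrow Q_i$, and (iii) $Q:=\bigcup_iQ_i$ is locally finite. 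Granting this, $Q$ is an ascending union of collapsible complexes, hence contractible and in particular simply connected; by local finiteness its vertex links stabilise and are flag, so $Q$ with the unit-cube metric is $\CAT(0)$ by Gromov's criterion (it is complete by the standing finiteness-of-cell-types assumption); and the map $|Q|\to|C|$ induced by the $h_i$ is a PL homeomorphism, so $C':=Q$ works. Up to PL equivalence this is the content of the theorem of \cite{AF}, which we outline.

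For the inductive step it is enough, after splitting each $K_{i+1}\searrow K_i$ into elementary collapses, to treat a single dilatation $K'\nearrow_e K$: here $\sigma$ is a free facet of its unique coface $\Sigma$ with $\dim\Sigma=\dim\sigma+1=k$, so $K=K'\cup\{\sigma,\Sigma\}$ and $K'$ contains the subcomplex $B:=\partial\Sigma-\sigma$, a combinatorial $(k-1)$-ball; topologically $|K|=|K'|\cup_{|B|}|\Sigma|$ with the $k$-ball $|\Sigma|$ glued along the $(k-1)$-ball $|B|$. On the cube side let $R\subset Q'$ be the cubical subcomplex with $h'(|R|)=|B|$; we carry along as an extra inductive invariant that $R$ is a cubical $(k-1)$-ball and a \emph{full} subcomplex of $Q'$. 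Set $Q:=Q'\cup\bigl(R\times[0,1]\bigr)$, gluing $R\times\{0\}$ to $R$. Then $R\times[0,1]$ is a cube complex, $|R|\times[0,1]$ is a $k$-ball attached to $|Q'|$ along the $(k-1)$-ball $|R|\times\{0\}=|R|$, and extending $h'$ by any PL homeomorphism $|R|\times[0,1]\to|\Sigma|$ correct on $|R|\times\{0\}$ yields $h$. Moreover the prism $R\times[0,1]$ collapses onto $R\times\{0\}$ and this collapse is supported away from $Q'\setminus R$, so $Q\searrow Q'$ (the polytopal analogue of Lemma~\ref{lem:uc}).

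The main obstacle is the link condition. At a vertex $v\times\{1\}$ the link in $Q$ is a cone over a copy of $\Lk(v,R)$, flag by induction, and cones over flag complexes are flag. At an old vertex $v\times\{0\}=v$ the link in $Q$ is $\Lk(v,Q')$ with a cone over $\Lk(v,R)$ glued along their common subcomplex $\Lk(v,R)$; fullness of $R$ in $Q'$ makes $\Lk(v,R)$ an induced subcomplex of the flag complex $\Lk(v,Q')$, and a flag complex with a cone attached along an induced flag subcomplex is again flag, while $R\times\{0\}$ is readily checked to stay full in $Q$. The genuine difficulty is that this fullness invariant is fragile: it can fail when $R$ is not suitably positioned inside $Q'$, producing empty simplices in the combined links. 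The robust fix — and the reason the conclusion is only up to PL equivalence rather than equality — is that whenever a link fails to be flag one passes to a cubical (midpoint) subdivision of the offending cubes: this always yields flag links, does not change PL type, and, performed with care stage by stage, leaves the union locally finite. Organising these subdivisions coherently across all $i$ is the technical heart of the argument in \cite{AF}.

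Finally, local finiteness and conclusion: since $C$ is locally finite and each prism attachment together with its attendant subdivisions is supported near the single cell $\Sigma$ being dilated, each cube of $Q$ is modified at only finitely many stages and each vertex lies in finitely many cubes, so $Q$ is locally finite with stabilising vertex links. Each $Q_i$ is collapsible, hence $Q$ is contractible, in particular simply connected; together with the flag link condition, Gromov's theorem gives that $Q$ is $\CAT(0)$. Thus $C':=Q$ is a locally finite $\CAT(0)$ cube complex PL equivalent to $C$.
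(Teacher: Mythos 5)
The paper does not actually prove this proposition; it simply cites the earlier paper \cite{AF}, so there is no in-paper argument to compare against. Judged on its own, your outline captures what is very likely the right global strategy: inflate each dilatation $K'\nearrow_e K$ into a prism attachment $Q'\rightsquigarrow Q'\cup(R\times[0,1])$, keep all vertex links flag, and invoke Cartan--Hadamard together with Gromov's criterion at the end. The flag-preservation calculations for the two kinds of vertices are correct, and you are right that simple connectivity must be supplied separately (you get it from collapsibility of each $Q_i$).

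There are, however, two genuine gaps that you flag but do not close, and they are not routine. First, you set $R:=h'^{-1}(|B|)$ and simply declare it to be a cubical subcomplex of $Q'$ which is moreover a full cubical $(k-1)$-ball. A PL homeomorphism $h'\colon|Q'|\to|K'|$ carries no compatibility with the cell structures, so there is no reason for the preimage of $|B|$ to be a subcomplex at all, let alone a cubical ball; and even after you force it to be one by prior subdivision, you must maintain this for every \emph{future} attaching ball $B'$, which can cut across both $Q'$ and the prisms you have just added. This is exactly where the inductive invariant must be made precise, and the proposal never states which combinatorial data the $h_i$ are required to respect. Second, the rescue by midpoint subdivision is not local in a cube complex: subdividing one cube subdivides its faces, and hence forces subdivision of every cube sharing those faces, and so on. Your assertion that ``each prism attachment together with its attendant subdivisions is supported near the single cell $\Sigma$'' is therefore false as stated; if subdivisions are allowed to propagate unboundedly, both local finiteness and the ``stabilising vertex links'' claim collapse. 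One has to arrange the refinements so they are absorbed in the newly attached material rather than pushed back into $Q'$ (for instance by inserting transition collars), and that is the nontrivial part of the argument. As written, the proposal is an honest and largely accurate road map, but it defers precisely the steps on which the theorem depends.
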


By Theorem~\ref{thm:dischadamardpoly}, the converse is also true, namely, if some $\CAT(0)$ cube complex $C'$ is PL homeomorphic to $C$, then $C'$ is arborescent. 
Therefore a simplicial complex admits an arborescent triangulation if and only if it is PL homeomorphic to a $\CAT(0)$ cubical complex.

\section{Barycentric subdivisions}

\subsection{Arborescence after subdivision}

Let $(S, \prec)$ be a poset and $S \subset T$. An \emph{extension} of $\prec$ to $T$ is a partial order $\widetilde{\prec}$ that coincides with $\prec$ when restricted to elements of $S$.

\begin{definition}[Derived order]\label{def:extord}
Let $C$ be a polytopal complex. Let $S$ denote a subset of mutually disjoint faces of $C$ and 
$\prec$ a partial order on $S$.   Let $\sigma$ be a face of $C$ and $\tau\subsetneq \sigma$ be a strict face of $\sigma$. 
\begin{enumerate}
\item If $\tau$ is the minimal face of $\sigma$ under $\prec$, we set $\tau\,\widetilde{\prec}\, \sigma$.
\item If $\tau$ is any other face of $\sigma$, we set $\sigma\, \widetilde{\prec}\, \tau$.
\end{enumerate}
The transitive closure of the relation $\widetilde{\prec}$ gives an irreflexive partial order on the faces of $C$, which is an extension of $\prec$. 
The correspondence between faces of $C$ and vertices of $\sd  C$, provides an irreflexive partial order on $\F_0(\sd  C)$. A total order that extends the later  
order is a \emph{derived order} of $\F_0(\sd  C)$ induced by $\prec$.
\end{definition}

\begin{definition}\label{def:slk}
Let $C$ be a simplicial complex endowed with a $\CAT(\kappa)$ polyhedral metric. 
Let $v$ be a vertex of $C$ and $F$ be a closed convex subset of $C$, $v\in \partial F$. 
Given a unit tangent vector $\nu$ at $p$ pointing inward $F$ we consider the totally geodesic support hyperplane $H^{\nu}$ at $p$ for $\partial F$ which is orthogonal to $\nu$. Let $\overline{H}_+$ be the closed 
connected component of $C\setminus H^{\nu}$  containing $F$ locally. 
The \emph{descending link} $\LLk (v,C)$ of~$C$ at $v$ with respect to the direction $\nu$ is the restriction $\RS(\Lk(v,C), \intx \TT_v^1 \overline{H}_+^{\nu})$ of $\Lk(v,C)$ to $\intx \TT_v^1 \overline{H}_+^{\nu}$, where 
\[\TT_v^1 \overline{H}_+^{\nu}= \{x\in \TT_v^1 \overline{H}_+; \measuredangle(x,\nu) < \frac{\pi}{2}\}.\]
\end{definition}

\subsection{Collapsibility of compact CAT(0) polyhedral complexes}
A key criterion to be used in the sequel is the following:

\begin{lemma}[Gromov--Alexandrov lemma; cf.\ \cite{BH}, Thm. II.5.4]
Let $\Sigma$ be a locally finite simplicial complex endowed with a complete metric  for which simplices are isometric to 
constant curvature $\kappa$ simplices (and only finitely many isometry classes occur). Then $\Sigma$ is a $\CAT(\kappa)$ space if and only if the link of every vertex of 
$\Sigma$ is  $\CAT(\kappa)$ with the induced spherical metric and moreover it contains no isometrically embedded  
circles of length less than $2\pi/\sqrt{\kappa}$ when $\kappa >0$, or equivalently $\Sigma$ is $\pi/\sqrt{\kappa}$ - uniquely geodesic.
\end{lemma}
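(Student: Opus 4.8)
The plan is to recover this from two classical ingredients: Berestovskii's theorem describing when a metric cone is $\CAT(\kappa)$, and the local-to-global principle (the Cartan--Hadamard theorem for $\kappa\le 0$, and its Alexandrov--Bowditch analogue for $\kappa>0$). The finitely-many-shapes hypothesis is used to guarantee, via the standard structure theory of $M_\kappa$-complexes, that $\Sigma$ is a complete geodesic space and that every point $p$ has a metric ball $B(p,\varepsilon)$ isometric to a ball around the apex of the $\kappa$-cone $C_\kappa(\Lk(p,\Sigma))$; this is the local model on which everything rests.

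First I would treat the easy ("only if") direction: if $\Sigma$ is $\CAT(\kappa)$ then it is locally $\CAT(\kappa)$, so each $B(p,\varepsilon)$ is $\CAT(\kappa)$, and since a ball around the apex of $C_\kappa(Y)$ is $\CAT(\kappa)$ only when $Y$ is $\CAT(1)$ (Berestovskii), the link $\Lk(p,\Sigma)$ is $\CAT(1)$; taking $p$ a vertex gives the link condition, and the absence of isometrically embedded circles of length $<2\pi/\sqrt\kappa$ (equivalently, $\pi/\sqrt\kappa$-unique geodesics) is immediate from the definition of $\CAT(\kappa)$ for $\kappa>0$. For the converse, I would first show $\Sigma$ is \emph{locally} $\CAT(\kappa)$ by induction on $\dim\Sigma$. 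For $p$ with carrier $\sigma$, the link decomposes as a spherical join $\Lk_p(\sigma,\Sigma)\cong S^{\dim\sigma-1}\ast\Lk(\sigma,\Sigma)$, and a spherical join is $\CAT(1)$ iff both factors are; so $B(p,\varepsilon)$ is $\CAT(\kappa)$ iff $\Lk(\sigma,\Sigma)$ is $\CAT(1)$. One then reduces "$\Lk(\sigma,\Sigma)$ is $\CAT(1)$ for all faces $\sigma$'' to "$\Lk(v,\Sigma)$ is $\CAT(1)$ for all vertices $v$'': for $|\sigma|\ge 2$ and $v\in\sigma$ we have $\Lk(\sigma,\Sigma)=\Lk(\sigma{-}v,\Lk(v,\Sigma))$, and since $\Lk(v,\Sigma)$ is a $\CAT(1)$ complex of strictly smaller dimension, the \emph{easy} direction applied inside it (a point of a $\CAT(1)$ space has a cone neighbourhood, so links of all its faces are $\CAT(1)$) shows $\Lk(\sigma,\Sigma)$ is $\CAT(1)$.

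Finally I would pass from local to global. If $\kappa\le 0$, completeness together with the Cartan--Hadamard theorem gives that $\Sigma$ is $\CAT(\kappa)$, provided $\Sigma$ is simply connected --- which holds in all the applications in this paper (where $\Sigma$ arises as a convex link, or where one replaces $\Sigma$ by its universal cover, which is again a locally $\CAT(\kappa)$ $M_\kappa$-complex with finitely many shapes). If $\kappa>0$, the hypothesis that $\Sigma$ contains no isometrically embedded circle of length $<2\pi/\sqrt\kappa$, i.e.\ that $\Sigma$ is $\pi/\sqrt\kappa$-uniquely geodesic, together with completeness and local $\CAT(\kappa)$-ness, yields that $\Sigma$ is globally $\CAT(\kappa)$ by the Alexandrov--Bowditch local-to-global theorem for spaces of curvature bounded above by a positive constant.

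The main obstacle is the $\kappa>0$ case on two fronts: verifying that the spherical cone $C_\kappa(\Lk(p,\Sigma))$ really is $\CAT(\kappa)$ --- which forces one to know that the link has no closed geodesic shorter than $2\pi$, an input that must be fed uniformly by the finitely-many-isometry-types hypothesis controlling the systoles of all iterated links --- and the local-to-global step itself, which is genuinely more delicate than Cartan--Hadamard and is precisely where the "no short embedded circles'' hypothesis in the statement is used. The remaining bookkeeping (carriers, the spherical-join identities for links, completeness of $M_\kappa$-complexes with finitely many shapes) is routine and can be quoted from \cite{BH}, I.7 and II.3--5.
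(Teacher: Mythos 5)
The paper does not prove this lemma; it cites it as Theorem II.5.4 of Bridson--Haefliger (together with the general-$\kappa$ link condition, their Theorem II.5.2), so there is no in-paper argument to compare against. Your sketch is the standard proof as presented in \cite{BH}: Berestovskii's cone characterization reduces local $\CAT(\kappa)$-ness to all links being $\CAT(1)$; the spherical-join identity $\Lk_p(\sigma,\Sigma)\cong S^{\dim\sigma-1}\ast\Lk(\sigma,\Sigma)$ together with $\Lk(\sigma,\Sigma)=\Lk(\sigma-v,\Lk(v,\Sigma))$ passes the link condition from arbitrary faces down to vertices; and the passage from local to global is Cartan--Hadamard when $\kappa\le 0$ and the more delicate Bowditch-type argument, controlled by the absence of short isometrically embedded circles, when $\kappa>0$. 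The finitely-many-shapes hypothesis is used exactly where you say, to get completeness, geodesicity, and the local cone model, quoting \cite{BH} I.7.

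One genuinely useful observation you make and should not drop: for $\kappa\le 0$ the passage from ``all vertex links $\CAT(1)$'' to ``$\Sigma$ is (globally) $\CAT(\kappa)$'' requires $\Sigma$ to be simply connected, and the lemma as stated in the paper omits this hypothesis (a flat torus satisfies the link condition but is not $\CAT(0)$). The omission is harmless in context, since every complex to which the paper applies the lemma is either convex in a $\CAT(0)$ space or a small-diameter subcomplex of a $\CAT(1)$ link, hence contractible, but the hypothesis should be stated. With that caveat made explicit, your sketch is correct and matches the argument the paper implicitly relies on from \cite{BH}.
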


Note also that a $\CAT(1)$-space is $\pi$-uniquely geodesic (see \cite{BH}, Prop. II.1.4). This implies that 
the link of every vertex of it is also $\pi$-uniquely geodesic, when endowed with the spherical metric and hence any 
link subcomplex of diameter smaller than $\pi$ is also a $\CAT(1)$-space. 
We begin with the following result for the compact case: 

\begin{theorem}\label{thmconvexcollapse}
Suppose that $C$ is a compact convex polytopal $d$-subcomplex of a polytopal complex $X$ and  
$F$ is a closed convex subset of $X$ in general position with respect to $C$,  such that: 
\begin{enumerate} 
\item either $X$ is endowed with a polyhedral $\CAT(0)$-metric;
\item or $X$ is endowed with a polyhedral $\CAT(1)$ metric and the diameter of $ F $ is smaller than ${\pi}$.  
\end{enumerate}
If $\partial C\cap  F =\emptyset$, then $N(\RS(C, F ),C)$ is collapsible.
 If $\partial C\cap  F $ is nonempty, and $C$ does not lie in $ F $, then $N(\RS(C, F ),C)$ collapses to the subcomplex $N(\RS(\partial C, F ),\partial C)$.

\end{theorem}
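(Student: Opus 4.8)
The plan is to build a single discrete Morse function on $N(\RS(C,F),C)$ (resp.\ a collapse onto the stated subcomplex) by using the distance function to the convex set $F$, exploiting the uniqueness of closest-point projections guaranteed by Lemma~\ref{lem:unique} in both the $\CAT(0)$ and the diameter-restricted $\CAT(1)$ settings. Concretely, I would first pass to the derived subdivision $\sd C$, so that $N(\RS(C,F),C) = \bigcup_{\sigma\in\sd F'} \St(\sigma,\sd C)$ for $F'$ the relevant restriction, and every face of $\sd C$ is a simplex on which the geometry is controlled. The key point is that, since $F$ is convex and in general position with respect to $C$, the function $g:=\operatorname{d}(\,\cdot\,,F)$ (distance to $F$) is, after a generic perturbation of the vertex positions of $\sd C$, a star-minimal function in the sense of the definition preceding Proposition~\ref{thm:formula}: continuity is clear, the unique-local-minimum-on-each-star property follows from Lemma~\ref{lem:unique} applied to the convex cells of $X$ containing a given star, and genericity handles the no-ties condition. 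Then Proposition~\ref{thm:formula} furnishes a Morse matching whose critical faces are indexed by pairs $(v,\tau)$ with $y_g(\tau)=v$ and $y_g(\tau-v)\ne v$.

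The heart of the argument is to count critical faces. When $\partial C\cap F=\emptyset$, the set $\RS(C,F)$ lies in the interior of $C$ and $g$ attains its absolute minimum on all of $N(\RS(C,F),C)$ along a single face meeting $F$; I would argue, exactly as in the proof of Theorem~\ref{thm:dischadamard}, that any putative critical face $\tau$ of positive dimension forces $y_g(\tau-v)=v$ — because the minimum of $g$ on $\St(\tau,C)$ is attained in the relative interior of a face containing $v$, hence $\St(\tau,C)$ and $\St(\tau-v,C)$ agree near that minimum — contradicting criticality. This leaves exactly one critical cell, so Theorem~\ref{lem:equivalence} (Forman) gives collapsibility. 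When $\partial C\cap F\ne\emptyset$ and $C\not\subset F$, the same local analysis shows that the only critical faces are those ``living on the boundary'', i.e.\ the ones that would be critical for the restriction of $g$ to $\partial C$; equivalently, the Morse matching restricts compatibly to $N(\RS(\partial C,F),\partial C)$, and the complementary cells are matched in pairs forming a collapse of $N(\RS(C,F),C)$ onto $N(\RS(\partial C,F),\partial C)$. Here one uses Lemma~\ref{lem:BruMani} (each facet $P$ of $C$ collapses onto $\St(\mu,\partial P)$ for $\mu$ the $g$-minimal face of $P$) together with Lemma~\ref{lem:uc} to glue these facet-wise collapses, much as in the proof of Theorem~\ref{thm:dischadamardpoly}.

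The main obstacle I anticipate is verifying that $g=\operatorname{d}(\,\cdot\,,F)$ genuinely has a \emph{unique} local minimum on each vertex star of $N(\RS(C,F),C)$. The subtlety is that a vertex star is a union of convex cells but need not itself be convex, so Lemma~\ref{lem:unique} does not apply directly to the star; one must instead apply it cell-by-cell and then rule out the possibility that distinct cells of the star contribute distinct competing local minima. This is where the ``general position'' hypothesis on $F$ relative to $C$ and the passage to $\sd C$ (so that stars are stars of simplices, with better-behaved combinatorics) are essential, and it is also where the $\CAT(1)$ diameter bound $\operatorname{diam} F<\pi$ enters, ensuring $N(\RS(C,F),C)$ sits inside a region of diameter $<\pi$ where the $\CAT(1)$ projection lemma is valid. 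A secondary, more bookkeeping-type obstacle is matching up the restriction of the Morse matching on $C$ with a genuine Morse matching on $\partial C$ in the second case; I expect this to follow formally from the pointer-map description in Proposition~\ref{thm:formula} once one observes that $y_g(\tau)$ for a face $\tau\subset\partial C$ is computed inside $\partial C$, but it requires care that no critical cell of $C$ is ``lost'' or ``created'' in passing to the boundary.
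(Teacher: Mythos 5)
Your proposal takes a genuinely different route from the paper, but it contains a gap at exactly the place you flag as the main obstacle, and the gap is not one that can be patched by the devices you suggest.

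The paper does \emph{not} attempt to build a single star-minimal function on $N(\RS(C,F),C)$. Instead it picks an interior point $p$ of $F$, uses the strictly convex single-point distance $\mathrm{d}_p$ (not $\mathrm{d}(\cdot,F)$, which vanishes on all of $F\cap C$ and so is degenerate there), and proceeds by induction on $\dim C$: it defines a set $\mathrm{M}(C,F)$ of faces on which $\mathrm{d}_p$ has an interior minimum, a derived order on the vertices of $\sd C$, and then deletes these vertices one at a time. At each deletion step the link $\Lk(v_i,\Sigma_i)$ is identified with a derived neighborhood $N(\mathrm{LLk}_m(\tau,C),\Lk_m(\tau,C))$ inside the $\CAT(1)$ space $\RN^1_{(m,\tau)}X$, and the inductive hypothesis (the theorem itself, in one dimension lower and in the $\CAT(1)$ case) is invoked to collapse that link. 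The $\CAT(1)$ clause of the theorem exists precisely so that this dimension reduction closes up. Lemmas~\ref{lem:ccoll}, \ref{lem:cecoll} and \ref{lem:uc} then convert these link collapses into a global collapse $\Sigma_i\searrow\Sigma_{i-1}$.

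The reason the paper is forced into this inductive detour is exactly the issue you identify and then wave away. A vertex star in $\sd C$ is not convex — derived subdivision destroys convexity of stars — so Lemma~\ref{lem:unique} applied cell by cell does \emph{not} yield a unique local minimum of $\mathrm{d}_p$ (let alone $\mathrm{d}(\cdot,F)$) on the star. A cellwise minimum on one cell and a cellwise minimum on a neighboring cell can both be local minima of the restriction to the star, and no amount of general position or perturbation of vertex coordinates repairs this: it is a structural failure of convexity, not a degeneracy one can perturb away. Consequently $\mathrm{d}_p$ is not star-minimal on $\sd C$, Proposition~\ref{thm:formula} does not apply directly, and the critical-face count in the style of Theorem~\ref{thm:dischadamard} (whose hypothesis is precisely convexity of stars) does not go through. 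The paper circumvents this by never asking for star-minimality globally: it only needs control over one link at a time, and the link sits in a lower-dimensional $\CAT(1)$ space where the induction takes over.

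A secondary gap is in the boundary case. You assert that the gradient matching ``restricts compatibly'' to $N(\RS(\partial C,F),\partial C)$, but the pointer map $y_g$ on a boundary face of $\sd C$ is computed over its full star in $\sd C$, which sticks into the interior, so there is no a priori reason $y_g$ should agree with the pointer map of the restricted function on $\partial C$. In the paper this is where Cases (3) and (4) of the proof do real work, again by passing to the link and using the inductive hypothesis together with Lemma~\ref{lem:uc}. If you want to salvage your approach, the honest statement is that you would have to reprove something equivalent to the inductive link argument in order to establish the star-minimal property you need — at which point you have reproduced the paper's proof.
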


\begin{proof}[\textbf{Proof of Theorem~\ref{thmconvexcollapse}}]
We use  induction on the dimension. The claim is true for 0-dimensional polytopal complexes $C$. 
Assume, from now on, that $d>0$ and the claim holds for dimensions $\leq d-1$. Recall that $\mathrm{L}(\tau,C)$ denotes the faces of $C$ strictly containing a face $\tau$ of $C$. 
\smallskip

\noindent  Let $p$ be an interior point  of $ F $, and  $\mathrm{d}_p(y)$ denote the distance of a point $y \in X$ to $p$ with respect to the polyhedral $\CAT(\kappa)$ metric on $X$.

Let $\mathrm{M}(C,  F )$ denote the faces $\sigma$ of $\RS(C, F )$ for which the function ${\min}_{y\in \sigma} \mathrm{d}_p(y)$ attains its minimum in the relative interior of $\sigma$. We order the elements of $\mathrm{M}(C,  F )$ strictly by defining $\sigma\prec \sigma'$ whenever $\min_{y\in\sigma}\mathrm{d}_p(y)<\min_{y\in\sigma'}\mathrm{d}_p(y)$.

There is an associated derived order on the vertices of $\sd C$, which we restrict to the vertices of $N(\RS(C, F ),C)$. Let $v_0, v_1, v_2, \, \cdots, v_n$ denote the vertices of $N(\RS(C, F ),C)$ labeled according to the latter order, starting with the minimal element $v_0$. Let $C_i$ denote the complex $N(\RS(C, F ),C)-\{v_i, v_{i+1}, \, \cdots, v_{n}\},$ and define 
\[\Sigma_i:=C_i\cup N(\RS(\partial C, F ),\partial C).\] We will prove that $\Sigma_i\searrow \Sigma_{i-1}$ for all $i$, $1\le i\le n$, proving our claims. There are four cases to consider here.

\begin{compactenum}[(1)]
\item $v_i$ is in the interior of $\sd C$ and corresponds to an element of $\mathrm{M}(C,  F )$.
\item $v_i$ is in the interior of $\sd C$ and corresponds to a face of $C$ not in $\mathrm{M}(C,  F )$.
\item $v_i$ is in the boundary of $\sd C$ and corresponds to an element of $\mathrm{M}(C,  F )$.
\item $v_i$ is in the boundary of $\sd C$ and corresponds to a face of $C$ not in $\mathrm{M}(C,  F )$.
\end{compactenum}
Let us denote by $\tau$ the face of $C$ corresponding to $v_i$ in $\sd C$, and let $m$ denote the unique point realizing the 
mimimum ${\min}_{y\in \tau} \mathrm{d}_p(y)$. Consider further the metric ball $B_m=\{y\in X; \mathrm{d}_p(y)\le\mathrm{d}_p(m)\}$. 

\smallskip

Observe that  $\RN^1_{(m,\tau)} X$ was identified  with the link $\Lk_{m}(\tau, X)$. By hypothesis 
$\RN^1_{(m,\tau)} X$ is a $\CAT(1)$-space. 
Moreover, $\tau$ is a totally geodesic submanifold of $X$ and $B_m$ a convex subset of $X$. We saw how to define then 
 $N^1_{(m,\tau)} B_m$, which is a subset of $\RN^1_{(m,\tau)} X$. From \cite{AF} $B_m$ is a polyhedron, under our assumptions. 
  Consider the geodesic issued joining $m$ to $p$. 
Its tangent vector at $m$ is a well-defined vector $\nu\in N^1_{(m,\tau)} B_m$, since $m$ realizes the 
unique minimum of the distance function $d_p$ to  the convex cell $\tau$. Therefore $N^1_{(m,\tau)} B_m$ is contained within the set 
of unit vectors making an angle with $\nu$ smaller than $\frac{\pi}{2}$, so that its diameter is at most ${\pi}$. It follows that 
 $N^1_{(m,\tau)} B_m$ is a convex subset of the $\CAT(1)$-space $\RN^1_{(m,\tau)} X$. 
As the distance function $d_p$ is strictly convex, the subcomplex $\RS(\Lk_m(\tau,C),  \RN^1_{(m,\tau)} B_m)$ has actually a diameter strictly smaller than $\pi$. 

\smallskip
\noindent \emph{Case $(1)$}:  The complex $\Lk(v_i,\varSigma_i)$ is combinatorially equivalent to $N(\mathrm{LLk}_m(\tau,C),\Lk_m(\tau,C))$, where \[\mathrm{LLk}_m(\tau,C):= \RS(\Lk_m(\tau,C),  \RN^1_{(m,\tau)} B_m)\] is the restriction of $\Lk_m(\tau,C)$ to the subspace $ \RN^1_{(m,\tau)} B_m$ of $\RN^1_{(m,\tau)} X$.  As $p$ was chosen to be generic, $\RN^1_{(m,\tau)} B_m$ is in general position with respect to $\Lk_m(\tau,C)$.

Hence, by  the induction assumption
the complex \[N(\mathrm{LLk}_m(\tau,C),\Lk_m(\tau,C))\cong \Lk (v_i,\varSigma_i)\] is collapsible. Consequently,  Lemma~\ref{lem:cecoll} proves $\varSigma_i\searrow \varSigma_{i-1}=\varSigma_i-v_i.$

\smallskip
\noindent \emph{Case $(2)$}: If $\tau$ is not an element of $\mathrm{M}(C,  F )$, let $\sigma$ denote the face of $\tau$ containing $m$ in its relative interior. Then, $\Lk(v_i, \varSigma_i)=\Lk(v_i, C_i)$ is combinatorially equivalent to the order complex of the union $\mathrm{L}(\tau,C)\cup \sigma$, whose elements are ordered by inclusion. Since $\sigma$ is a unique global minimum of the poset, the complex $\Lk(v_i, \varSigma_i)$ is a cone, and in fact combinatorially equivalent to a cone over base $\sd \Lk(\tau, C)$. But all cones are collapsible (Lemma~\ref{lem:ccoll}), so $\Lk(v_i, \varSigma_i)$ is collapsible. Consequently, Lemma~\ref{lem:cecoll} gives $\varSigma_i\searrow \varSigma_{i-1}=\varSigma_i-v_i$.

\smallskip
\noindent \emph{Case $(3)$}: This time, $v_i$ is in the boundary of $\sd C$. As in case {(1)}, $\Lk (v_i,C_i)$ is combinatorially equivalent to the complex 
\[N(\mathrm{LLk}_m(\tau,C),\Lk_m(\tau,C)),\  \  \mathrm{LLk}_m(\tau,C):= \RS(\Lk_m(\tau,C),  \RN^1_{(m,\tau)} B_m)\] 
in the $\CAT(1)$-space  $\RN^1_{(m,\tau)} X$. 
As $\tau$ is not the face of $C$ that minimizes $\mathrm{d}_p(y)$ since $v_i\neq v_n$, so that $\RN^1_{(m,\tau)} B_m\cap \RN^1_{(m,\tau)} \partial C$ is nonempty. Since furthermore $\RN^1_{(m,\tau)} B_m$ is a $\CAT(1)$ space of diameter smaller than $\pi$ 
it is also convex. Being in general position with respect to the complex $\Lk_m(\tau,C)$ in $\RN^1_{(m,\tau)} X$, the inductive assumption  applies.  Thus the complex $N(\mathrm{LLk}_m(\tau,C),\Lk_m(\tau,C))$ collapses to 
\[N(\mathrm{LLk}_m(\tau,\partial C),\Lk_m(\tau,\partial C))\cong \Lk (v_i,C'_i),\ \  C'_i:=C_{i-1} \cup (C_i\cap N(\RS(\partial C, F ),\partial C)).\]
Consequently, Lemma~\ref{lem:cecoll} proves that $C_i$ collapses to $C'_i$. Since \[ \varSigma_{i-1}\cap C_i =(C_{i-1}\cup N(\RS(\partial C, F ),\partial C))\cap C_i=  C_{i-1} \cup (C_i\cap N(\RS(\partial C, F ),\partial C))= C'_i\]
Lemma~\ref{lem:uc}, applied to the union $\varSigma_i=C_i\cup \varSigma_{i-1}$ of complexes $C_i$ and $\varSigma_{i-1}$ gives that $\varSigma_i$ collapses onto~$\varSigma_{i-1}.$

\smallskip
\noindent \emph{Case $(4)$}: As observed in case {(2)}, the complex $\Lk(v_i,C_i)$ is  combinatorially equivalent to a cone over base $\sd \Lk(\tau, C)$, which collapses to the cone over the subcomplex $ \sd  \Lk(\tau, \partial C)$ by Lemma~\ref{lem:ccoll}. Thus, the complex $C_i$ collapses to $C'_i:=C_{i-1}\cup(C_i \cap N(\RS(\partial C, F ),\partial C))$ by Lemma~\ref{lem:cecoll}. Now, we have $\varSigma_{i-1}\cap C_i=C'_i$ as in case {(3)}, so that $\varSigma_i$ collapses onto $\varSigma_{i-1}$ by Lemma~\ref{lem:uc}.

\smallskip

This finishes the proof of Theorem~\ref{thmconvexcollapse}.

\end{proof}

\subsection{Proof of Theorem \ref{mainthm:CATpolyhedral}}
Our aim is to show that: 
\begin{theorem}\label{arborescent}
A locally finite simplicial complex which is CAT(0) for a polyhedral metric 
has its first barycentric subdivision arborescent.  
\end{theorem}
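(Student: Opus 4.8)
The statement to prove is Theorem~\ref{arborescent}: a locally finite simplicial complex $C$ that is $\CAT(0)$ for a polyhedral metric has arborescent first barycentric subdivision $\sd C$. The natural strategy is to run the same closest-point-projection/discrete-Morse scheme as in the proof of Theorem~\ref{thm:dischadamard}, but now at the level of $\sd C$, exhausting $C$ by an ascending sequence of convex compact subcomplexes and showing that the barycentric subdivision of each term collapses onto the barycentric subdivision of the previous one. The key technical engine is Theorem~\ref{thmconvexcollapse} (applied in its $\CAT(0)$ case (1)): if $C$ is a compact convex polytopal subcomplex of $X$ and $F$ a closed convex set in general position with $\partial C \cap F = \emptyset$, then $N(\RS(C,F),C)$ is collapsible, and if $\partial C \cap F \ne \emptyset$ it collapses onto $N(\RS(\partial C,F),\partial C)$.

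\textbf{Steps.} First I would fix a generic basepoint $p$ in $|C|$ and, since $C$ is $\CAT(0)$ hence an ascending union of convex finite subcomplexes (the metric balls $\overline{B}(p,r_i)$ give convex compact sets, and the combinatorial neighborhoods $C_i := N(\RS(C,\overline{B}(p,r_i)), C)$ are finite convex subcomplexes up to isotopy) with $r_i \to \infty$ and $C = \bigcup_i C_i$. The goal is then to show $\sd C_{i+1} \searrow \sd C_i$ for all $i$, which by the definition of arborescence (and Proposition~\ref{thm:liccon} for the base case, giving that $\sd C_0$ is collapsible since $C_0$ is a subdivision of a convex body) yields arborescence of $\sd C$. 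To get $\sd C_{i+1} \searrow \sd C_i$: set $X = C_{i+1}$, apply Theorem~\ref{thmconvexcollapse} with $C = C_{i+1}$ (a compact convex $d$-subcomplex of itself, after noting convex subcomplexes of $\CAT(0)$ complexes are themselves convex by the local-to-global convexity argument already used) and $F = \overline{B}(p,r_i)$, obtaining that $N(\RS(C_{i+1},F),C_{i+1})$ collapses onto $N(\RS(\partial C_{i+1}, F),\partial C_{i+1})$ when the boundary meets $F$. Then I would combine this with Proposition~\ref{thm:hudson} (subdivision-compatibility of collapses: if $C \searrow C'$ then $\sd C \searrow \sd \RS(\sd C, C')$... more precisely $\sd D \searrow \sd D'$ with $D' = \RS(D,C')$) to push the collapse down to barycentric subdivisions, and with Lemma~\ref{lem:uc} to glue the collapsed piece of $C_{i+1}$ back onto $\sd C_i$, exactly as the gluing is done inside the four-case analysis of Theorem~\ref{thmconvexcollapse}. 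The derived-order bookkeeping of Definition~\ref{def:extord} and Definition~\ref{def:slk} (descending links) is what makes the vertex-by-vertex collapse of $\sd C_{i+1}$ onto $\sd C_i$ work, since the order on vertices of $\sd C_{i+1}$ must be chosen compatibly with the distance $\mathrm{d}_p$ so that each elementary collapse is legitimate.

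\textbf{Main obstacle.} The hardest part will be the interface between the two exhaustion levels: showing that the compact-case collapse result (Theorem~\ref{thmconvexcollapse}) can be iterated coherently so that the infinitely many collapses $\sd C_{i+1} \searrow \sd C_i$ are mutually compatible and actually exhibit $\sd C$ as built from a vertex by dilatations. Concretely, one must verify that $N(\RS(C_{i+1}, \overline{B}(p,r_i)), C_{i+1})$ deformation-collapses precisely onto (a subdivision of) $C_i$ — i.e. that the combinatorial derived neighborhood used in Theorem~\ref{thmconvexcollapse} agrees, up to the collapses already performed, with the next term of the exhaustion — and that the general-position requirement on $F$ relative to $C_{i+1}$ can be met simultaneously for all $i$ by a single generic choice of $p$ (or by perturbing the radii $r_i$). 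A secondary subtlety is that $C$ is simplicial but the convex subcomplexes $C_i$ and their restrictions need not be subcomplexes of $C$ in the naive sense — one passes to the derived neighborhood $N(D,C) = \bigcup_{\sigma \in \sd D}\St(\sigma, \sd C)$, so bookkeeping must be done in $\sd C$ from the start, which is precisely why the statement is about the barycentric subdivision and not about $C$ itself. Once these compatibility checks are in place, Theorem~\ref{lem:equivalenceinf} (or directly the definition of arborescence) closes the argument.
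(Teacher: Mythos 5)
Your proposal reaches for the right ingredients --- the distance function from a generic basepoint, derived neighborhoods, Theorem~\ref{thmconvexcollapse}, and the derived order --- but the way you chain them does not close. The objects $C_i := N(\RS(C,\overline{B}(p,r_i)),C)$ are not convex subcomplexes in any sense that Theorem~\ref{thmconvexcollapse} accepts: derived neighborhoods are combinatorial thickenings, and ``convex up to isotopy'' is not a hypothesis. More seriously, even granting convexity, invoking Theorem~\ref{thmconvexcollapse} with $C=C_{i+1}$ and $F=\overline{B}(p,r_i)$ produces a collapse of $N(\RS(C_{i+1},F),C_{i+1})$ onto $N(\RS(\partial C_{i+1},F),\partial C_{i+1})$ when $\partial C_{i+1}\cap F\neq\emptyset$, and mere collapsibility of $N(\RS(C_{i+1},F),C_{i+1})$ when $\partial C_{i+1}\cap F=\emptyset$. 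Neither conclusion is the relative collapse $\sd C_{i+1}\searrow\sd C_i$ you need, and in your setup $F=\overline{B}(p,r_i)$ lies strictly interior to $C_{i+1}$, so you are in the second (unusable) alternative. You correctly flag the interface between consecutive exhaustion levels as the main obstacle, but this is not a compatibility check to be arranged by perturbing $p$ or the radii: the nested-ball reduction to Theorem~\ref{thmconvexcollapse} simply does not deliver the step. There is also a subdivision-level mismatch: your $C_i$ already live inside $\sd C$, so ``$\sd C_i$'' belongs to the second barycentric subdivision, while the statement concerns the first.

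The paper avoids the ball-by-ball scheme entirely and instead reruns the vertex-by-vertex argument of the proof of Theorem~\ref{thmconvexcollapse} directly on $\sd X$, with $X$ the whole complex and no auxiliary $F$. Fix a generic interior point $p$, let $\mathrm{M}(X)$ be the set of faces $\sigma$ on which $\min_{y\in\sigma}\mathrm{d}_p(y)$ is attained in the relative interior, and take the derived order this induces on the vertices of $\sd X$; since the polyhedral $\CAT(0)$ metric is complete with finitely many cell shapes and the complex is locally finite, $\mathrm{d}_p$ is proper and this order has type $\omega$. Setting $\Sigma_i:=\sd X-\{v_j: j\geq i\}$, one proves $\Sigma_i\searrow\Sigma_{i-1}$ by the same two local cases as in the compact proof --- the collapsibility of $\Lk(v_{i-1},\Sigma_i)$ is handled by the compact $\CAT(1)$ instance of Theorem~\ref{thmconvexcollapse} or by the cone lemma, applied to a finite link of one lower dimension. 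Because there is no bounded $F$ and no $\partial C$, Cases (3) and (4) of that proof never occur. In short, compactness in Theorem~\ref{thmconvexcollapse} was only needed to guarantee finitely many steps; once one observes that each elementary step is strictly local and the ordering has type $\omega$, the locally finite statement follows with essentially no new argument.
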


\begin{proof}
We follow the lines of the proof of Theorem~\ref{thmconvexcollapse}.
We use  induction on the dimension. The claim is true for 0-dimensional polytopal complexes $C$. 
Assume, from now on, that $d>0$ and the claim holds for dimensions $\leq d-1$. 
\smallskip

 Let $p$ be an interior point  of $ X $, and  $\mathrm{d}_p(y)$ denote the distance of a point $y \in X$ to $p$ with respect to the polyhedral $\CAT(\kappa)$ metric on $X$.

Let $\mathrm{M}(X )$ denote the faces $\sigma$ of $X$ for which the function ${\min}_{y\in \sigma} \mathrm{d}_p(y)$ attains its minimum in the relative interior of $\sigma$. We order the elements of $\mathrm{M}(X)$ strictly by defining $\sigma\prec \sigma'$ whenever $\min_{y\in\sigma}\mathrm{d}_p(y)<\min_{y\in\sigma'}\mathrm{d}_p(y)$.

There is an associated derived order on the vertices of $\sd X$. Let $v_0, v_1, v_2, \, \ldots, v_n,\ldots$ denote the vertices of $\sd X$ labeled according to the latter order, starting with the minimal element $v_0$. Let $\Sigma_i$ denote the complex 
$X-\{v_j,  j\geq i\}$. We claim that $\Sigma_i\searrow \Sigma_{i-1}$ for all $i\geq 1$. The proof given above in the compact case works here without essential changes. 
\end{proof}

\section{Arborescent manifolds}
\subsection{Arborescent PL triangulations} \label{subsec:3}
Whitehead proved in \cite{Whitehead}: 

\begin{proposition} Let $M$ be a (compact) manifold with boundary. If some PL triangulation of~$M$ is collapsible, then $M$ is a ball.
\end{proposition}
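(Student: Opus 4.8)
The plan is to prove this by induction on the dimension $m$ of $M$, using the behaviour of a collapse when it is localized near a single free face and exploiting the link structure at vertices. First I would record the base cases: for $m \le 2$ the statement is classical (a collapsible $1$-complex is an arc, a collapsible PL surface with boundary is a disc). For the inductive step, suppose $M^m$ has a collapsible PL triangulation $C$, so $C \searrow v$ for a vertex $v$. Run the collapse backwards: $C$ is built from $v$ by a sequence of dilatations, i.e.\ $C = C_N \nearrow_e C_{N-1} \nearrow_e \cdots \nearrow_e C_1 \nearrow_e C_0$ reading the collapse in reverse, with $C_N = \{v\}$ and $C_0 = C$. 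The key point is to show that each $C_j$ is a PL ball; this is proved by downward induction on $j$, the claim being that adding back a free face to a PL ball along part of its boundary yields again a PL ball.

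The heart of the argument is the following local statement: if $B$ is a PL $m$-ball, $\sigma$ a simplex, and $B \cup \sigma$ is again a simplicial complex meeting $B$ in exactly the boundary faces of $\sigma$ not containing a designated free face $\sigma^{\ast}$, then $B \cup \sigma$ is a PL $m$-ball. When $\dim \sigma = m$ this says we are gluing an $m$-simplex to $B$ along a shellable ball in $\partial B$ (a union of facets of $\sigma$ forming a ball), and the conclusion follows from the fact that attaching a ball to a ball along a ball in the common boundary gives a ball -- a standard consequence of the PL Schoenflies theorem / regular neighbourhood theory (see \cite{Hudson}). When $\dim \sigma < m$, one passes to the link: $\Lk(\sigma^{\ast}, B\cup\sigma)$ is obtained from $\Lk(\sigma^{\ast}, B)$ by a collapse of the corresponding lower-dimensional type, and $\Lk(\sigma^{\ast}, B)$ is a PL sphere or ball of dimension $m - \dim\sigma^{\ast} - 1$; the inductive hypothesis on dimension tells us how these links behave, and one reassembles $B \cup \sigma$ from its stars. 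I expect the main obstacle to be the bookkeeping here: ensuring that the "free face" condition is preserved under passing to links and that the complex $B\cup\sigma$ is genuinely a manifold with boundary at each stage (i.e.\ that no non-manifold point is created), which is where the PL hypothesis is essential and where one must invoke that links of interior points are PL spheres.

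An alternative, cleaner route -- which I would actually prefer to present -- avoids reconstructing the ball step by step and instead uses regular neighbourhood theory directly: a collapsible complex $C$ is, by definition, a regular neighbourhood of the point $v$ in itself, and Whitehead's regular neighbourhood theorem says that any two regular neighbourhoods of a point in a PL manifold are PL homeomorphic. Since $v$ has a regular neighbourhood in the PL manifold $|C|$ that is a PL ball (the star of $v$ in a fine enough subdivision), and since $C \searrow v$ exhibits $|C|$ itself as a regular neighbourhood of $v$, we conclude $|C|$ is PL homeomorphic to an $m$-ball; in particular $M$ is a ball. The one subtlety to check carefully is that "$C$ collapses to $v$" does indeed certify $|C|$ as a regular neighbourhood of $v$ in the PL manifold sense -- this requires knowing that the collapse can be realized geometrically, which again uses that $C$ is a PL triangulation of a manifold so that each elementary collapse is an elementary collapse of manifolds-with-boundary. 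I expect that verifying this compatibility between the combinatorial collapse and the geometric regular-neighbourhood machinery is the real content, and it is exactly the place where the hypotheses "PL" and "manifold" cannot be dropped -- as the excerpt notes, non-PL collapsible triangulations of non-balls exist in dimensions $\ge 4$.
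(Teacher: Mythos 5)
The paper gives no proof of this proposition; it cites Whitehead's 1939 paper, whose argument is exactly your second route via regular neighborhoods, so that is the approach to present. Your first approach, however, contains a genuine flaw rather than a bookkeeping obstacle: the intermediate complexes $C_j$ in a collapse of $M$ to $v$ are in general not manifolds, let alone PL balls. For example, collapse a triangulated $2$-disc by first removing every boundary-edge/triangle free pair; what remains is a cone over finitely many points, which already fails to be a manifold at the cone vertex. So the downward-inductive claim that each $C_j$ is a PL ball is simply false, and no amount of care with links can rescue it, because an arbitrary sequence of elementary collapses does not respect the manifold structure. The tool that converts a combinatorial collapse of a PL manifold into a sequence of geometric ball-onto-ball collapses is precisely the regular neighborhood and cellular collapsing machinery of \cite{Hudson}, which is the content of your second approach. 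In that second approach, the one step you rightly flag --- that $C \searrow v$ certifies $|C|$ as a regular neighborhood of $v$ in $|C|$ --- is immediate from Whitehead's characterization (a compact PL submanifold that is a topological neighborhood of $X$ and collapses to $X$) when $v$ lies in the interior of $M$; when $v \in \partial M$ one should invoke the relative regular-neighborhood uniqueness theorem, or observe that the derived star of $v$ is still a PL ball (a half-ball in the boundary case), so uniqueness still identifies $|C|$ with a ball.
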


The following  is a extension of Whitehead's theorem to  the case of open manifolds, 
which was independently obtained by Guilbault and Gulbrandsen (see \cite{GuiGul}) with a different proof: 

\begin{proposition}\label{openwhitehead} 
Let $M$ be an open manifold. If some PL triangulation of $M$ is arborescent, then $M$ is PL homeomorphic to the Euclidean space.
\end{proposition}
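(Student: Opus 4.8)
The plan is to exhaust $M$ by an ascending union of compact PL balls and then recognize the result as Euclidean space via the standard characterization of $\R^n$ among open manifolds. Concretely, since the given PL triangulation $\Delta$ of $M$ is arborescent, by definition there is an ascending sequence of finite subcomplexes $K_1 \subset K_2 \subset \cdots$ with $\bigcup_i K_i = \Delta$ and $K_{i+1} \searrow K_i$ for every $i$. The first step is to replace this filtration by one consisting of compact PL manifolds with boundary: I would pass to regular (derived) neighborhoods $N_i := N(K_i, \sd^{(r)}\Delta)$ for suitable $r$, which are compact PL submanifolds of $M$, are PL collapsible (a regular neighborhood of a collapsible complex collapses to it, hence collapses to a point), and satisfy $N_i \subset \mathrm{int}(N_{i+1})$ with $\bigcup_i N_i = M$. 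By Whitehead's theorem (the Proposition quoted just above), each $N_i$ is a PL $n$-ball.

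The second step is to control how consecutive balls sit inside one another. Each $N_{i+1} \setminus \mathrm{int}(N_i)$ is a compact PL manifold; the goal is to show it is a PL $h$-cobordism, in fact a product $\partial N_i \times [0,1]$, so that $M$ is PL homeomorphic to $\R^n$. The cleanest route is to invoke the PL / smooth characterization of Euclidean space: an open $n$-manifold is homeomorphic (PL homeomorphic, in the PL category, away from the low dimensions that require care) to $\R^n$ if and only if it is contractible and simply connected at infinity. Arborescent complexes are contractible (stated in the excerpt), so $M$ is contractible. For simple connectivity at infinity I would argue directly from the filtration by balls: given a compact set, it lies in some $N_i$, and I must show loops in $M \setminus N_j$ (for $j$ large) bound in $M \setminus N_i$. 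Here the collapsibility of $N_{j+1}$ onto $N_i$-type data, together with the fact that $N_{j+1}\searrow N_j \searrow \cdots$, forces the complement regions to be highly connected; more robustly, each $N_{i+1}$ being a ball containing the ball $N_i$ in its interior means $\partial N_i$ and $\partial N_{i+1}$ cobound, and one extracts $\pi_1$-triviality of the "ends" from the fact that a ball minus an interior ball is (after possibly stabilizing) an annulus $S^{n-1}\times[0,1]$ in the relevant range.

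The main obstacle is precisely this last point: a priori $N_{i+1} \setminus \mathrm{int}(N_i)$ is only known to be a compact PL manifold whose two boundary components are PL $(n-1)$-spheres (boundaries of balls) together with the boundary of $N_{i+1}$; it need not obviously be a product. The honest fix is the PL annulus/$h$-cobordism theorem: $N_{i+1}\setminus\mathrm{int}(N_i)$ is an $h$-cobordism from $\partial N_i$ to $\partial N_{i+1}$ (both inclusions are homotopy equivalences since both $N_i$ and $N_{i+1}$ are balls and the pair is $1$-connected), its Whitehead torsion vanishes because $\pi_1$ is trivial, hence by the PL $s$-cobordism theorem it is a product $\partial N_i \times [0,1]$ for $n \ge 6$, with the low-dimensional cases $n \le 5$ handled by classical results (Stallings, and dimension $3$ by work on Property $S$ / the Poincaré-type arguments available for the $\R^3$ characterization; dimension $4$ via the fact that an open contractible PL $4$-manifold that is simply connected at infinity is $\R^4$). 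Stringing the product structures together via an infinite composition of PL homeomorphisms yields a PL homeomorphism $M = \bigcup_i N_i \cong N_1 \cup (\partial N_1 \times [0,\infty)) \cong \R^n$.

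Finally I would record the cross-reference to the authors' earlier work: since an arborescent locally finite complex is PL homeomorphic to a locally finite $\CAT(0)$ cube complex (Proposition~\ref{thm:collcat}), the open manifold $M$ carries a $\CAT(0)$ polyhedral metric, which independently guarantees simple connectivity at infinity (a standard consequence of the geodesic retraction in $\CAT(0)$ spaces) and contractibility; this gives an alternative, metric derivation of the "simply connected at infinity" hypothesis feeding into the Euclidean-space recognition theorem, avoiding the purely combinatorial analysis of the complements $M\setminus N_i$.
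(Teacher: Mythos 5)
Your route is genuinely different from the paper's. You exhaust $M$ by regular neighborhoods $N_i$ of the collapsible stages $K_i$, use Whitehead's theorem to see each $N_i$ is a PL ball, derive simple connectivity at infinity from the nested balls, and then appeal to recognition theorems for $\R^n$ in the PL category. The paper instead cites its own Proposition~\ref{thm:collcat} (arborescent $\Rightarrow$ PL homeomorphic to a $\CAT(0)$ cube complex) and then applies Stone's theorem \cite{Stone}, which says that a simply connected PL manifold carrying a piecewise flat $\CAT(0)$ metric is PL homeomorphic to $\R^n$. That is the whole proof: two citations, no filtration, no $h$-cobordism. Your cross-reference to \ref{thm:collcat} at the end is gesturing at the paper's actual argument, but you use it only to re-derive simple connectivity at infinity rather than to invoke Stone directly.

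The difference is not merely stylistic; your approach has a genuine gap in dimension $4$. You assert that ``an open contractible PL $4$-manifold that is simply connected at infinity is $\R^4$,'' but this is a theorem only in the topological category (Freedman). In the PL category, which in dimension $4$ coincides with the smooth category, it is false or at best unknown: exotic $\R^4$'s are contractible, simply connected at infinity, and PL (=smooth), yet not PL homeomorphic to standard $\R^4$. The $s$-cobordism route fares no better, since the PL/smooth $s$-cobordism theorem for $5$-dimensional cobordisms between $3$-spheres is not available (the $4$-dimensional PL/smooth annulus problem is open). So your argument correctly handles $n\le 3$ (Edwards, Rolfsen) and $n\ge 5$ (Stallings), but cannot close the case $n=4$ by purely topological recognition. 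The paper's use of Stone's theorem sidesteps exactly this: it exploits the geometric $\CAT(0)$ structure rather than just contractibility and the behavior at infinity, and that additional input is what rules out exotic PL structures in dimension~$4$.
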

\begin{proof}
By (\cite{AF}, Thm. 4) there exists a locally finite 
CAT(0) cubical complex $K$ which is PL homeomorphic to $M$. 
According to Stone's theorem \cite{Stone} a simply connected manifold M endowed with a PL triangulation for which the associated polyhedral piecewise flat  metric is CAT(0) should be PL homeomorphic to the Euclidean space. 
It follows that $M$ is PL homeomorphic to the Euclidean space. 
\end{proof}
As an immediate consequence the Whitehead 3-manifold (see \cite{Whitehead1935}) and its generalizations in higher dimensions \cite{FG} do not admit $\CAT(0)$ polyhedral metrics. This is particularly interesting when the dimension is 4. In fact open contractible manifold is  triangulable and by Perelman any triangulation of a 4-manifold should be PL. 
By \cite{Lytchak} an open $\CAT(0)$-manifold is homeomorphic to $\R^4$. However, if the 
$\CAT(0)$-metric is polyhedral, then the manifold is PL homeomorphic to $\R^4$, namely 
it cannot be an exotic $\R^4$.

\subsection{Polyhedral CAT(0) Manifolds}
We now focus on non-PL triangulations. Let us introduce a convenient notation:
\begin{compactitem}[$-$]
\item {$\CAT_d^\Box$} denotes the class of open $d$-manifolds PL homeomorphic to $\CAT(0)$ cube complexes;
\item {$\CAT_d$} denotes the class of open $d$-manifolds admitting a complete polyhedral $\CAT(0)$ metric with finitely many shapes;
\item {$\mathrm{A}_d$} denotes the open $d$-manifolds that admit an arborescent triangulation; 
\item {$\mathrm{PC}_d$} denotes all open contractible $d$-manifolds which are pseudo-collarable, have strongly semistable fundamental group at infinity and have vanishing Chapman-Siebenmann obstruction (see \cite{AF}).
\end{compactitem}

\begin{proposition}\label{thm:ccc}
For each $d \ge 5$ one has
\[\CAT_d^\Box=\mathrm{A}_d \subseteq \mathrm{PC}_d,\]
whereas for $d\leq 4$ one has
\[\CAT_d^\Box=\mathrm{A}_d= \{\R^d\}.\]
\end{proposition}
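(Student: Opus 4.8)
\textbf{Plan for the proof of Proposition~\ref{thm:ccc}.} The strategy is to assemble the equalities and inclusion from results already proved in this paper together with the structure theorems from \cite{AF}. First I would establish $\CAT_d^\Box = \mathrm{A}_d$ in all dimensions: the inclusion $\mathrm{A}_d \subseteq \CAT_d^\Box$ is exactly Proposition~\ref{thm:collcat}, which produces from any arborescent triangulation a PL-equivalent $\CAT(0)$ cube complex; the reverse inclusion $\CAT_d^\Box \subseteq \mathrm{A}_d$ follows from Theorem~\ref{thm:dischadamardpoly} (via Corollary~\ref{cor:CubeComplexes}), since a $\CAT(0)$ cube complex is arborescent, and arborescence passes to any PL-equivalent triangulation. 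This gives the identification $\CAT_d^\Box = \mathrm{A}_d$ uniformly, and it is really the content of Corollary~\ref{mainthm:CAT(0)man}.

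Next, for $d \ge 5$, I would prove $\mathrm{A}_d \subseteq \mathrm{PC}_d$. Here I would invoke the analysis of the topology at infinity of arborescent (equivalently, $\CAT(0)$-cubulated) open manifolds carried out in \cite{AF}: an open $d$-manifold carrying a $\CAT(0)$ cube complex structure is contractible, is pseudo-collarable, has strongly semistable fundamental group at infinity, and has vanishing Chapman--Siebenmann (Wall-type) end obstruction. Since $\mathrm{PC}_d$ is defined precisely by this list of properties, the inclusion is immediate once those facts are cited. One should remark that the inclusion need not be an equality --- the point of writing $\subseteq$ rather than $=$ is that not every pseudo-collarable, semistable, obstruction-free open contractible $d$-manifold is known to admit a $\CAT(0)$ cubulation --- so no converse is claimed here.

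Finally, for $d \le 4$ I would show $\CAT_d^\Box = \mathrm{A}_d = \{\R^d\}$. Given the already-established equality $\CAT_d^\Box = \mathrm{A}_d$, it suffices to show that an open $d$-manifold, $d \le 4$, admitting an arborescent triangulation is PL homeomorphic to $\R^d$, together with the trivial observation that $\R^d$ itself admits one. The forward direction is Proposition~\ref{openwhitehead}: an open manifold with an arborescent PL triangulation is PL homeomorphic to Euclidean space. The subtlety is that an arborescent triangulation of a $4$-manifold need not be PL a priori; but, as noted in Section~\ref{subsec:3}, in dimensions $d \le 3$ every triangulation is PL by Moise, and in dimension $4$ every triangulation of a $4$-manifold is PL by Perelman's work, so Proposition~\ref{openwhitehead} applies in all these dimensions. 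I expect the one genuinely delicate point to be this PL-ness issue in dimension $4$ (and making sure the cited results from \cite{AF} and \cite{Stone,Lytchak} are being applied in exactly the generality stated); the rest is bookkeeping among the inclusions already in hand.
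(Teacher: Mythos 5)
Your proposal follows essentially the paper's own route: establish $\CAT_d^\Box = \mathrm{A}_d$ in all dimensions by combining Theorem~\ref{thm:dischadamardpoly} (cube complexes are arborescent) with Proposition~\ref{thm:collcat} (arborescent implies PL homeomorphic to a cube complex), cite \cite{AF} for the inclusion into $\mathrm{PC}_d$ when $d\ge 5$, and for $d\le 4$ invoke the PL-ness of all triangulations (Moise for $d\le 3$, Perelman for $d=4$) so that Proposition~\ref{openwhitehead} applies.

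The one point to repair is the phrase ``arborescence passes to any PL-equivalent triangulation'': that statement is false, since arborescence, like collapsibility, is a combinatorial property of a fixed triangulation and is not a PL invariant. What the paper actually uses is that the derived subdivision of an arborescent polytopal complex is again arborescent (via \cite{Welker} and Proposition~\ref{thm:hudson}). This yields \emph{some} arborescent triangulation of any space PL homeomorphic to a $\CAT(0)$ cube complex, which is all that membership in $\mathrm{A}_d$ requires. With that correction the argument matches the paper's proof step for step.
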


\begin{proof}
By Theorem~\ref{thm:dischadamardpoly}, every locally finite $\CAT(0)$ cube complex is arborescent, and so is its first derived subdivision.  \cite{Welker}.  Hence  locally finite $\CAT(0)$ cube complexes admit arborescent triangulations. 
Conversely, locally finite arborescent complexes  are PL homeomorphic to CAT(0) cubical complexes (see \cite{AF}, Thm. 4) and  thus
\[\CAT_d^\Box=\mathrm{A}_d  \: \textrm{ for all } d.\] 
We showed in (\cite{AF}), Thm.1) that 
\[\CAT_d\subseteq \mathrm{PC}_d \: \textrm{ for all } d\geq 5.\] 

When $d \le 4$, every triangulation of a $d$-manifold is PL. This is non-trivial,  for $d=4$, this statement relies on the Poincar\'e Conjecture, a.k.a. Perelman theorem. By  the Whitehead type theorem \ref{openwhitehead}, this implies that $\mathrm{CAT}_d= \{\R^d\}$. 
All open CAT(0) $3$-manifolds are homeomorphic to the Euclidean 3-space \cite{Rolfsen}, so 
\[\{\R^d\} =  \CAT_d = \mathrm{A}_d =\mathrm{PC}_d \: \textrm{ for } d\le 3.\] 
\[\{\R^4\} = \CAT_4^\Box=A_4\subseteq \mathrm{PC}_4.\]
This improves the result of \cite{Lytchak} in a particular case.  
\end{proof}

\begin{rem}
An optimistic conjecture is that $\mathrm{A}_d= \mathrm{PC}_d$ for $d\geq 5$. 
We proved in \cite{AF} that $\mathrm{PC}_d\subseteq \mathrm{A}^{TOP}_d$, for $d\geq 5$, where 
{$\mathrm{A}^{TOP}_d$} we denote the open $d$-manifolds that are topologically arborescent. 
For $d\geq 5$  all we need to show is that $\mathrm{PC}_d \subseteq \CAT_d^\Box$, namely, that every open contractible $d$-manifold, $d\ge 5$,
pseudo-collarable, having strongly semistable fundamental group at infinity and vanishing Chapman-Siebenmann obstruction 
admits a $\CAT(0)$ cube structure. 
\end{rem}


\end{document}